\numberwithin{equation}{section}
\newtheorem{theo}{Theorem}[section]
\newtheorem{lemma}[theo]{Lemma}
\theoremstyle{definition} % Definitionen nichtmehr kursiv geschrieben
\newtheorem{rem}[theo]{Remark}
\newtheorem{definition}[theo]{Definition}
\newcommand{\R}{\mathbb{R}}
\newcommand{\N}{\mathbb{N}}
\newcommand{\C}{\mathbb{C}}
\renewcommand{\P}{\mathbb{P}}
\newcommand{\E}{\mathbb{E}}
\newcommand{\X}{\mathcal{X}}
\newcommand{\conv}{\mathrm{conv}}
\newcommand{\rank}{\mathrm{rank}}
\newcommand{\linspan}{\mathrm{span}}
\newcommand{\diag}{\mathrm{diag}}
\newcommand{\kernel}{\mathrm{kern}}
\newcommand{\argmin}{\operatornamewithlimits{arg\,min}}
\newcommand{\tr}{\mathrm{tr}}
\newcommand{\vol}{\mathrm{vol}}
\newcommand{\dist}{\mathrm{dist}}
\newcommand{\sgn}{\mathrm{sgn}}
\newcommand{\range}{\mathrm{range}}
\newcommand{\leM}{\preccurlyeq}
\newcommand{\geM}{\succcurlyeq}
\newlength{\fixboxwidth}
\begin{document}
\pagenumbering{alph}
\pagestyle{empty}
\begin{titlepage}
\title{{\bf Random Matrices}\\[.5cm]{\bf and Matrix Completion}\\[2cm]\uppercase{\small {Lecture Script}}}
\author{Jan Vybiral\\[10cm]}
\date{\today}
\maketitle
\end{titlepage}

%\cleardoublepage
\tableofcontents
%\cleardoublepage
\newpage

\pagenumbering{arabic}
\setcounter{page}1
\pagestyle{scrheadings}
\thispagestyle{scrplain}

\setcounter{page}{1}
\lehead[]{Random Matrices and Matrix Completion}
%\rehead[]{Random Matrices and Matrix Completion}
\section*{Foreword}

The aim of this note (as well as of the course itself) is to give
a largely self-contained proof of two of the main results in the field
of low-rank matrix recovery. This field aims for identification
of low-rank matrices from only limited linear information
exploiting in a crucial way their very special structure.
As a crucial tool we develop also the basic statements
of the theory of random matrices.

The notes are based on a number of sources, which appeared in the last few years.
As we give only the minimal amount of the subject needed for the application
in mind, the reader is invited to study this further reading in detail.

\newpage

\section{Introduction to randomness}

Before we come to the main subject of our work, we give a brief introduction to
the role of randomness in functional analysis and numerics. Although some of the results
presented here are not used later on in the text, the methods used here already introduce
some of the main ideas.

\subsection{Approximate Caratheodory theorem}

Classical Caratheodory's theorem states that a point in a convex hull of any set in $\R^n$
is actually also a convex combination of only $n+1$ points from this set.

\begin{theo}(Caratheodory's theorem). Consider a set $A$ in $\R^n$ and a point $x\in\conv(A)$.
Then there exists a subset $A_0\subset A$ of cardinality $|A_0|\le  n + 1$ such that $x\in\conv(A_0)$. In other words,
every point in the convex hull of $A$ can be expressed as a convex combination of at most $n+1$ points from $A$.
\end{theo}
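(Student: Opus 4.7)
The plan is to use a standard minimality and dimension-counting argument. Starting from the definition, I write $x = \sum_{i=1}^k \lambda_i a_i$ with $a_i \in A$, $\lambda_i \ge 0$, and $\sum_{i=1}^k \lambda_i = 1$, where $k$ is chosen to be the smallest integer for which such a representation exists. The goal is then to show $k \le n+1$ by deriving a contradiction from the assumption $k \ge n+2$.

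First I would translate the problem from convex to linear algebra. Under the assumption $k \ge n+2$, the $k-1$ vectors $a_2 - a_1, a_3 - a_1, \ldots, a_k - a_1$ live in $\R^n$ and are more than $n$ in number, hence linearly dependent. This yields scalars $\mu_2, \ldots, \mu_k$, not all zero, with $\sum_{i=2}^k \mu_i(a_i - a_1) = 0$. Setting $\mu_1 := -\sum_{i=2}^k \mu_i$, I obtain coefficients $\mu_1, \ldots, \mu_k$, not all zero, satisfying the two linear relations
\[
\sum_{i=1}^k \mu_i a_i = 0, \qquad \sum_{i=1}^k \mu_i = 0.
\]

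Next I would perturb the convex combination along this direction to kill a coefficient. For any real $t$, the identity $x = \sum_{i=1}^k (\lambda_i - t\mu_i) a_i$ holds, and the new coefficients still sum to $1$. Since the $\mu_i$ sum to zero and are not all zero, at least one is strictly positive. Choosing
\[
t := \min\left\{\frac{\lambda_i}{\mu_i} : \mu_i > 0\right\} \ge 0
\]
ensures that all new coefficients $\lambda_i - t\mu_i$ remain nonnegative, while at least one of them vanishes (at the index achieving the minimum). Dropping that term produces a representation of $x$ as a convex combination of at most $k-1$ points from $A$, contradicting minimality of $k$.

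I do not expect any real obstacle here; the only subtle point is to verify that $t$ is well-defined and nonnegative, which follows from the observation that some $\mu_i > 0$ together with $\lambda_i \ge 0$. Everything else is bookkeeping.
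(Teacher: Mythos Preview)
Your argument is correct and is the standard minimality/affine-dependence proof of Carath\'eodory's theorem. Note, however, that the paper does not actually give a proof of this statement: the classical Carath\'eodory theorem is merely stated, and the paper immediately proceeds to prove the \emph{approximate} Carath\'eodory theorem by a probabilistic argument. So there is no proof in the paper to compare against; your write-up simply supplies the omitted classical proof, and it is fine as written. One tiny remark: by minimality of $k$ all $\lambda_i$ are strictly positive (otherwise a term could be dropped), so in fact $t>0$; this is implicit in your argument but worth stating explicitly.
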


We will show a dimension-independent approximative version of this theorem. The proof is probabilistic - the existence of a good
linear combination is proven to exist by estimating a mean of certain random variables.
As they can take only finitely many values, no extensive introduction into probability theory is needed.

We will need a notion of a radius of a set in a Hilbert space, which is given simply by
$$
r(A)=\sup\{\|a\|:a\in A\}.
$$

\begin{theo}(Approximate Caratheodory's theorem). Consider a bounded set $A$ in a Hilbert
space $H$ and a point $x\in\conv(A)$. Then, for every $N\in\N$, one can find points $x_1,\dots,x_N \in A$ such
that
$$
\Bigl\|x-\frac{1}{N}\sum_{j=1}^Nx_j\Bigr\|\le \frac{r(A)}{\sqrt{N}}.
$$
\end{theo}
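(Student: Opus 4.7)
The plan is to use a probabilistic (empirical mean) argument. Since $x\in\conv(A)$, by Caratheodory's theorem (or directly from the definition of convex hull), one can write $x=\sum_{i=1}^m \lambda_i a_i$ with $a_i\in A$, $\lambda_i\ge 0$, and $\sum_{i=1}^m \lambda_i=1$. The coefficients $\lambda_i$ define a probability distribution on the finite set $\{a_1,\dots,a_m\}\subset A$, so I would introduce a random element $Z$ of $H$ taking value $a_i$ with probability $\lambda_i$. By construction $\E Z=x$, and $\|Z\|\le r(A)$ almost surely.

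Next I would draw i.i.d.\ copies $Z_1,\dots,Z_N$ of $Z$ and study the second moment of the empirical error
$$
\E\Bigl\|x-\frac{1}{N}\sum_{j=1}^N Z_j\Bigr\|^2
=\frac{1}{N^2}\sum_{j,k=1}^N \E\langle Z_j-x,\,Z_k-x\rangle.
$$
For $j\ne k$, independence and $\E(Z_j-x)=0$ make the cross terms vanish, leaving only the $N$ diagonal contributions, each equal to $\E\|Z-x\|^2$. Using $\E\|Z-x\|^2=\E\|Z\|^2-\|x\|^2\le \E\|Z\|^2\le r(A)^2$, one obtains
$$
\E\Bigl\|x-\frac{1}{N}\sum_{j=1}^N Z_j\Bigr\|^2\le \frac{r(A)^2}{N}.
$$

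Finally, since the expectation of the squared error is at most $r(A)^2/N$, there must exist at least one realization $(x_1,\dots,x_N)$ of $(Z_1,\dots,Z_N)$ for which the squared error is no larger than this average. Taking square roots gives the announced bound, and the $x_j$ lie in $A$ because $Z$ was constructed to be supported in $\{a_1,\dots,a_m\}$.

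I do not expect a serious technical obstacle here; the whole argument is short once one sees the idea. The only point requiring a little care is the orthogonality of the cross terms, which relies on the Hilbert space inner product together with the independence and zero mean of the $Z_j-x$: this is what produces the dimension-free $1/\sqrt{N}$ rate and distinguishes the Hilbert space setting from a general Banach space, where the same strategy would not yield such a clean bound.
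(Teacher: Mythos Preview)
Your proof is correct and follows essentially the same probabilistic argument as the paper: represent $x$ as a finite convex combination, interpret the coefficients as a probability distribution, draw $N$ i.i.d.\ samples, and bound the expected squared error of the empirical mean. The only cosmetic difference is that you expand the second moment in terms of the centered variables $Z_j-x$ (so the cross terms vanish directly), whereas the paper expands $\langle Z_j,Z_k\rangle$ and cancels the $\|x\|^2$ terms by hand; the resulting bound and the final existence step are identical.
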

\begin{proof}Let $x\in\conv(A)$. Then it can be written as a convex combination of some points $z_1,\dots,z_m\in A$
with coefficients $\lambda_1,\dots,\lambda_m\ge 0$, $\lambda_1+\dots+\lambda_m=1$:
$$
x=\sum_{j=1}^m \lambda_j z_j.
$$
Let us now consider a random vector-valued variable $Z$ with values in $H$, which takes the value $z_j$ with probability $\lambda_j.$
Then
$$
\E\,Z=\sum_{j=1}^m\lambda_jz_j=x.
$$
In other words, on average, the value of $Z$ is $x$. On the other hand, if $Z_1, Z_2,\dots$ are independent copies of $Z$, then
$\frac{1}{N}\sum_{j=1}^N Z_j$ should tend to the mean of $Z$ as $N\to \infty.$ Indeed, in the mean we have
\begin{align*}
\E\Bigl\|x-\frac{1}{N}\sum_{j=1}^N Z_j\Bigr\|^2&=\E\Bigl\langle x-\frac{1}{N}\sum_{j=1}^N Z_j, x-\frac{1}{N}\sum_{k=1}^N Z_k\Bigr\rangle\\
&=\|x\|^2-\frac{2}{N}\E\Bigl\langle x,\sum_{j=1}^N Z_j\Bigr\rangle+\frac{1}{N^2}\sum_{j,k=1}^N\E \langle Z_j,Z_k \rangle\\
&=-\|x\|^2+\frac{1}{N^2}\sum_{j,k=1}^N\E \langle Z_j,Z_k \rangle.
\end{align*}
If $j=k$, the pair $(Z_j,Z_j)$ takes values $(z_j,z_j)$ with probability $\lambda_j$ and
$$
\E\langle Z_j,Z_j\rangle=\sum_{l=1}^m\lambda_l \langle z_l,z_l\rangle=\sum_{l=1}^m\lambda_l \|z_l\|^2\le r(A)^2.
$$
If $j\not= k$, the independence of $Z_j$ and $Z_k$ shows that the pair $(Z_j,Z_k)$ takes the value $(z_l,z_{l'})$ with probability $\lambda_l\cdot\lambda_{l'}$
and
$$
\E\langle Z_j,Z_k\rangle =\sum_{l,l'=1}^m \lambda_l\lambda_{l'}\langle z_l,z_{l'}\rangle=\|x\|_2^2.
$$
Finally,
\begin{align*}
\E\Bigl\|x-\frac{1}{N}\sum_{j=1}^N Z_j\Bigr\|^2&=-\|x\|^2+\frac{1}{N^2}\sum_{j,k=1}^N\E \langle Z_j,Z_k \rangle\\
&=-\|x\|^2+\frac{1}{N^2}\sum_{j=1}^N\E \langle Z_j,Z_j \rangle+\frac{1}{N^2}\sum_{j\not=k}\E\langle Z_j,Z_k\rangle\\
&\le -\|x\|^2+\frac{Nr(A)^2}{N^2}+\frac{N(N-1)}{N^2}\|x\|^2\\
&=\frac{r(A)^2}{N}-\frac{\|x\|^2}{N}\le\frac{r(A)^2}{N}.
\end{align*}
There is therefore a realization of the random variables $Z_i$ (i.e. one point $\omega$ in the probability space), such that
$$
\Bigl\|x-\frac{1}{N}\sum_{j=1}^N Z_j(\omega)\Bigr\|\le\frac{r(A)}{\sqrt{N}}.
$$
Putting $x_j=Z_j(\omega)$, we finish the proof.

\end{proof}

\subsection{Monte Carlo integration}

The use of random constructions and algorithms became a standard technique in the last decades in many different areas of mathematics.
As one example out of many let us sketch their use in numerical integration. Let us assume that we have a function $f:\Omega_d\to \R$,
where $\Omega_d\subset\R^d$ has (for simplicity) measure 1. We would like to approximate the integral of $f$
$$
I=\int_{\Omega_d}f(x)dx
$$
using only a limited number of function values of $f$. The methods of Monte Carlo propose to replace the classical cubature formulas
(which typically scale badly with $d\to\infty$) by a sum
$$
I(x_1,\dots,x_n)=\frac{1}{n}\sum_{j=1}^n f(x_j),
$$
where $x_j$'s are chosen independently and randomly from $\Omega_d.$ It is easy to see, that on the average we have indeed
$$
\E I(x_1,\dots,x_n)=\frac{1}{n}\sum_{j=1}^n \E f(x_j)=I.
$$
But we are of course also interested how much do $I$ and $I(x_1,\dots,x_n)$ differ for some choice of $x_1,\dots,x_n$,
i.e. how big is $|I-I(x_1,\dots,x_n)|$. If we measure this error in the $L_2$-sense, we obtain easily
\begin{align*}
\E|I-I(x_1,\dots,x_n)|^2&=\E[I^2-2I\cdot I(x_1,\dots,x_n)+I^2(x_1,\dots,x_n)]\\
&=I^2-2I\cdot \E I(x_1,\dots,x_n)+\E I^2(x_1,\dots,x_n)\\
&=-I^2+\E I^2(x_1,\dots,x_n)\\
&=-I^2+\frac{1}{n^2}\sum_{j=1}^n\E f(x_j)^2+\frac{1}{n^2}\sum_{j\not=k}\E f(x_j)f(x_k)\\
&=-I^2+\frac{\|f\|_2^2}{n}+\frac{n(n-1)}{n^2}I^2=\frac{\|f\|_2^2}{n}-\frac{1}{n}I^2\le \frac{\|f\|_2^2}{n}.
\end{align*}
Hence
$$
\Bigl(\E|I-I(x_1,\dots,x_n)|^2\Bigr)^{1/2}\le \frac{\|f\|_2}{\sqrt{n}}
$$
independently on $d$ and the regularity properties of $f$.
\subsection{Concentration of measure}

If $\omega_1,\dots,\omega_m$ are (possibly dependent) standard normal random variables, then ${\mathbb E} (\omega_1^2+\dots+\omega_m^2)=m$. If $\omega_1,\dots, \omega_m$ are even independent,
then the value of $\omega_1^2+\dots+\omega_m^2$ concentrates very strongly around $m$. This effect is known as \emph{concentration of measure}, cf. \cite{Led, LT, MS}.
Before we come to a quantitative description of this effect, we need two simple facts about standard normal variables.

\begin{lemma}\label{lem:N}
\begin{enumerate}
\item[(i)] Let $\omega$ be a standard normal variable. Then ${\mathbb E}\,(e^{\lambda \omega^2})=1/\sqrt{1-2\lambda}$ \ for\  $-\infty<\lambda<1/2.$
\item[(ii)] (2-stability of the normal distribution) Let $m\in{\mathbb N}$, let $\lambda=(\lambda_1,\dots,\lambda_m)\in\R^m$
and let $\omega_1,\dots,\omega_m$ be i.i.d. standard normal variables. 
Then $\lambda_1\omega_1+\dots+\lambda_m\omega_m \sim (\sum_{i=1}^m \lambda_i^2)^{1/2}\cdot{\mathcal N}(0,1)$, i.e. it is equidistributed with a multiple of
a standard normal variable.
\end{enumerate}
\end{lemma}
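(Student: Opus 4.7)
For part (i), my plan is a direct computation of the Gaussian integral. I would write out the expectation explicitly using the density of the standard normal, giving
$$\E(e^{\lambda \omega^2})=\frac{1}{\sqrt{2\pi}}\int_{-\infty}^\infty e^{\lambda t^2}e^{-t^2/2}\,dt=\frac{1}{\sqrt{2\pi}}\int_{-\infty}^\infty e^{-(1-2\lambda)t^2/2}\,dt,$$
and then observe that the condition $\lambda<1/2$ guarantees $1-2\lambda>0$, so the integral converges. A change of variables $s=t\sqrt{1-2\lambda}$ reduces it to the standard normalization integral and produces the factor $1/\sqrt{1-2\lambda}$. This part is completely routine; the only subtlety is making sure the range of $\lambda$ is stated so the integrand actually has a finite integral (for $\lambda\ge 1/2$ it blows up), which is built into the hypothesis.

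For part (ii), the cleanest route is via characteristic functions (or equivalently, moment generating functions, using part (i) after replacing $\omega$ by $i\omega$ formally). The plan is to recall first that the characteristic function of a standard normal $\omega$ is $\E(e^{it\omega})=e^{-t^2/2}$, which can be established by the same type of Gaussian integral as in (i). Then for independent $\omega_1,\dots,\omega_m$,
$$\E\bigl(e^{it(\lambda_1\omega_1+\dots+\lambda_m\omega_m)}\bigr)=\prod_{j=1}^m \E(e^{it\lambda_j\omega_j})=\prod_{j=1}^m e^{-t^2\lambda_j^2/2}=e^{-t^2(\sum_j \lambda_j^2)/2}.$$
On the other hand, writing $\sigma=(\sum_j\lambda_j^2)^{1/2}$, the characteristic function of $\sigma\cdot\mathcal{N}(0,1)$ is $e^{-t^2\sigma^2/2}$, which matches. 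By uniqueness of the characteristic function, the two distributions coincide.

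The only genuine obstacle is philosophical: the argument assumes the inversion/uniqueness theorem for characteristic functions, which has not been developed in the text. If one prefers to stay elementary, an alternative plan is to prove (ii) by induction on $m$, where the induction step is a direct convolution of two Gaussian densities with variances $\sigma_1^2$ and $\sigma_2^2$, which by completing the square in the exponent yields a Gaussian with variance $\sigma_1^2+\sigma_2^2$. Either approach keeps the lemma essentially self-contained and short.
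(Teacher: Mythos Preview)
Your proof of part~(i) is identical to the paper's: the same direct computation with the substitution $s=\sqrt{1-2\lambda}\,t$.

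For part~(ii) your argument is correct, but the paper takes a different and more elementary route. You use characteristic functions together with the uniqueness theorem (and offer convolution of densities as a fallback). The paper instead reduces to $m=2$ by induction and then gives a purely geometric argument: it computes $\P(\lambda_1\omega_1+\lambda_2\omega_2\le t)$ by integrating the bivariate density $\frac{1}{2\pi}e^{-(u^2+v^2)/2}$ over the half-plane $\{\lambda_1 u+\lambda_2 v\le t\}$, uses rotational invariance to rotate this half-plane to $\{u\le c\}$, and identifies $c=t/\sqrt{\lambda_1^2+\lambda_2^2}$ as the distance from the origin to the boundary line. This yields the CDF of $\sqrt{\lambda_1^2+\lambda_2^2}\cdot\mathcal{N}(0,1)$ directly. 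The advantage of the paper's approach is that it is entirely self-contained, requiring neither the inversion theorem for characteristic functions nor any convolution computation; your characteristic-function argument is shorter and more standard, but (as you yourself note) imports a nontrivial black box. Your convolution alternative is also fine and closer in spirit to the paper's level, though still algebraically heavier than the geometric rotation.
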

\begin{proof}
The proof of (i) follows from the substitution $s:=\sqrt{1-2\lambda}\cdot t$ in the following way.
\begin{align*}
{\mathbb E}\,(e^{\lambda\omega^2})=\frac{1}{\sqrt{2\pi}}\int_{-\infty}^\infty e^{\lambda t^2}\cdot e^{-t^2/2}dt &= 
\frac{1}{\sqrt{2\pi}}\int_{-\infty}^\infty e^{(\lambda-1/2) t^2}dt\\&= 
\frac{1}{\sqrt{2\pi}}\int_{-\infty}^\infty e^{- s^2/2}\cdot \frac{ds}{\sqrt{1-2\lambda}} = \frac{1}{\sqrt{1-2\lambda}}.
\end{align*}
Although the property (ii) is very well known (and there are several different ways to prove it), we provide a simple geometric proof for the sake of completeness.
It is enough to consider the case $m=2$. The general case then follows by induction. 

Let therefore $\lambda=(\lambda_1,\lambda_2)\in\R^2, \lambda\not=0$, be fixed and let $\omega_1$ and $\omega_2$ be i.i.d. standard normal random variables.
We put $S:=\lambda_1\omega_1+\lambda_2\omega_2.$ Let $t\ge 0$ be an arbitrary non-negative real number. We calculate
\begin{align*}
{\mathbb P}(S\le t)&=\frac{1}{2\pi}\int_{(u,v):\lambda_1u+\lambda_2v\le t}e^{-(u^2+v^2)/2}dudv
=\frac{1}{2\pi}\int_{u\le c; v\in\R}e^{-(u^2+v^2)/2}dudv\\
&=\frac{1}{\sqrt{2\pi}}\int_{u\le c}e^{-u^2/2}du.
\end{align*}
We have used the rotational invariance of the function $(u,v)\to e^{-(u^2+v^2)/2}$. The value
of $c$ is given by the distance of the origin from the line $\{(u,v):\lambda_1u+\lambda_2v=t\}$.
It follows by elementary geometry and Pythagorean theorem that (cf. $\Delta OAP\simeq\Delta BAO$ in Figure \ref{fig:c})
$$
c=|OP|=|OB|\cdot\frac{|OA|}{|AB|}%=\frac{t}{\lambda_1}\cdot\frac{t/\lambda_2}{\sqrt{\frac{t^2}{\lambda_1^2}+\frac{t^2}{\lambda_2^2}}}
=\frac{t}{\sqrt{\lambda_1^2+\lambda_2^2}}.
$$
\begin{figure}[h]
\centering
\scalebox{.7}{\input{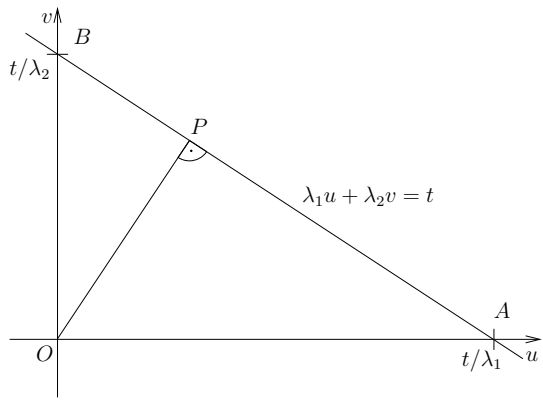}}
\caption{Calculating $c=|OP|$ by elementary geometry for $\lambda_1,\lambda_2>0$}\label{fig:c}
\end{figure}

We therefore get
\begin{align*}
{\mathbb P}(S\le t)=\frac{1}{\sqrt{2\pi}}\int_{\sqrt{\lambda_1^2+\lambda_2^2}\cdot u\le t}e^{-u^2/2}du
={\mathbb P}\left(\sqrt{\lambda_1^2+\lambda_2^2}\cdot \omega\le t\right).
\end{align*}
The same estimate holds for negative $t$'s by symmetry and the proof is finished.
\end{proof}

Following lemma is the promised description of concentration of $\omega_1^2+\dots+\omega_m^2$ around its mean, i.e. $m$.
It shows that the probability, that $\omega_1^2+\dots+\omega_m^2$ is much larger (or much smaller) than $m$ is \emph{exponentially} small!

\begin{lemma}\label{Lemma:Con}
Let $m\in{\mathbb N}$ and let $\omega_1,\dots,\omega_m$ be i.i.d. standard normal variables. Let $0<\varepsilon<1$. Then
\begin{equation*}
{\mathbb P}(\omega_1^2+\dots+\omega_m^2\ge(1+\varepsilon)m)\le e^{-\frac{m}{2}[\varepsilon^2/2-\varepsilon^3/3]}
\end{equation*}
and
\begin{equation*}
{\mathbb P}(\omega_1^2+\dots+\omega_m^2\le(1-\varepsilon)m)\le e^{-\frac{m}{2}[\varepsilon^2/2-\varepsilon^3/3]}.
\end{equation*}
\end{lemma}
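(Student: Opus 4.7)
The plan is a classical Chernoff--Cramér argument: apply Markov's inequality to the exponential of the sum, use independence together with part (i) of Lemma \ref{lem:N} to factor the moment generating function, and then optimize in the free parameter $\lambda$.

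For the upper tail, I would fix $\lambda\in(0,1/2)$ and write, via Markov,
\begin{equation*}
\P(\omega_1^2+\dots+\omega_m^2\ge(1+\varepsilon)m)\le e^{-\lambda(1+\varepsilon)m}\,\E\,e^{\lambda(\omega_1^2+\dots+\omega_m^2)}.
\end{equation*}
By independence the expectation factorizes into $m$ copies of $\E\,e^{\lambda\omega_1^2}=(1-2\lambda)^{-1/2}$, giving
\begin{equation*}
\P(\omega_1^2+\dots+\omega_m^2\ge(1+\varepsilon)m)\le \bigl[e^{-\lambda(1+\varepsilon)}(1-2\lambda)^{-1/2}\bigr]^{m}.
\end{equation*}
Optimising in $\lambda$ gives the natural choice $\lambda=\varepsilon/(2(1+\varepsilon))$, which makes $1-2\lambda=(1+\varepsilon)^{-1}$. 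The bracket then collapses to $[(1+\varepsilon)e^{-\varepsilon}]^{m/2}=\exp\bigl(\tfrac{m}{2}[\ln(1+\varepsilon)-\varepsilon]\bigr)$.

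The lower tail is treated symmetrically: for $\lambda>0$,
\begin{equation*}
\P(\omega_1^2+\dots+\omega_m^2\le(1-\varepsilon)m)\le e^{\lambda(1-\varepsilon)m}\,\E\,e^{-\lambda(\omega_1^2+\dots+\omega_m^2)}=\bigl[e^{\lambda(1-\varepsilon)}(1+2\lambda)^{-1/2}\bigr]^{m},
\end{equation*}
and the optimal $\lambda=\varepsilon/(2(1-\varepsilon))$ produces the analogous expression $\exp\bigl(\tfrac{m}{2}[\varepsilon+\ln(1-\varepsilon)]\bigr)$.

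What is left is an elementary calculus check, and this is the only real content: one needs $\ln(1+\varepsilon)-\varepsilon\le -\varepsilon^2/2+\varepsilon^3/3$ and $\varepsilon+\ln(1-\varepsilon)\le -\varepsilon^2/2+\varepsilon^3/3$ for $0<\varepsilon<1$. Both are easiest to see from the Taylor series of $\ln(1\pm\varepsilon)$: for $\ln(1+\varepsilon)=\varepsilon-\varepsilon^2/2+\varepsilon^3/3-\varepsilon^4/4+\dots$ the tail after the cubic term is an alternating series whose terms decrease in absolute value for $\varepsilon\in(0,1)$, hence is negative; for $\ln(1-\varepsilon)=-\varepsilon-\varepsilon^2/2-\varepsilon^3/3-\dots$ the difference $\ln(1-\varepsilon)-(-\varepsilon-\varepsilon^2/2+\varepsilon^3/3)$ is a sum of negative terms, hence $\le 0$. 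This is the only place where the somewhat unusual numerical exponent $\varepsilon^2/2-\varepsilon^3/3$ appears; it is exactly the price paid for keeping the bound uniform up to $\varepsilon\to 1^-$ on the lower tail. Combining gives both stated inequalities.
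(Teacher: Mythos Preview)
Your proof is correct and follows essentially the same Chernoff--Cram\'er route as the paper: Markov's inequality, factorization via Lemma~\ref{lem:N}(i), optimization in $\lambda$ (your $\lambda=\varepsilon/(2(1+\varepsilon))$ is exactly the paper's $\lambda=(1-1/\beta)/2$ with $\beta=1+\varepsilon$), and then the elementary logarithm inequality $\ln(1+t)\le t-t^2/2+t^3/3$. The only cosmetic difference is that you spell out the lower-tail computation and the Taylor-series justification explicitly, whereas the paper dispatches both in one line.
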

\begin{proof}
We prove only the first inequality. The second one follows in exactly the same manner. Let us put $\beta:=1+\varepsilon>1$ and calculate
\begin{align*}
{\mathbb P}(\omega_1^2+\dots+\omega_m^2\ge\beta m)&= {\mathbb P}(\omega_1^2+\dots+\omega_m^2-\beta m\ge 0) \\
&= {\mathbb P}(\lambda(\omega_1^2+\dots+\omega_m^2-\beta m)\ge 0)\\
&={\mathbb P}(\exp(\lambda(\omega_1^2+\dots+\omega_m^2-\beta m))\ge 1)\\
&\le{\mathbb E} \exp(\lambda(\omega_1^2+\dots+\omega_m^2-\beta m)),
\end{align*}
where $\lambda>0$ is a positive real number, which shall be chosen later on. We have used the Markov's inequality %\eqref{eq:Mark}
in the last step. Further we use the elementary properties of exponential function and %\eqref{eq:indep}
the independence of the variables $\omega_1,\dots,\omega_m$. This leads to
\begin{align*}
{\mathbb E} \exp(\lambda(\omega_1^2+\dots+\omega_m^2-\beta m))&=e^{-\lambda\beta m}\,\cdot\,{\mathbb E}\,e^{\lambda \omega_1^2}\cdots e^{\lambda \omega_m^2}
=e^{-\lambda\beta m}\,\cdot\,({\mathbb E}\,e^{\lambda \omega_1^2})^m
\end{align*}
and with the help of Lemma \ref{lem:N} we get finally (for $0<\lambda<1/2$)
\begin{align*}
{\mathbb E} \exp(\lambda(\omega_1^2+\dots+\omega_m^2-\beta m))&=e^{-\lambda\beta m}\cdot (1-2\lambda)^{-m/2}.
\end{align*}
We now look for the value of $0<\lambda<1/2$, which would minimize the last expression.
Therefore, we take the derivative of $e^{-\lambda\beta m}\cdot (1-2\lambda)^{-m/2}$ and put it equal to zero.
After a straightforward calculation, we get
\[
\lambda=\frac{1-1/\beta}{2},
\]
which obviously satisfies also $0<\lambda<1/2$.
Using this value of $\lambda$ we obtain
\begin{align*}
{\mathbb P}(\omega_1^2+\dots+\omega_m^2\ge \beta m)&\le e^{-\frac{1-1/\beta}{2}\cdot\beta m}\cdot (1-(1-1/\beta))^{-m/2}
=e^{-\frac{\beta-1}{2}m}\cdot \beta^{m/2}\\
&=e^{-\frac{\varepsilon m}{2}}\cdot e^{\frac{m}{2}\ln (1+\varepsilon)}.
\end{align*}
The result then follows from the inequality
$$
\ln(1+t)\le t-\frac{t^2}{2}+\frac{t^3}{3},\quad -1<t<1.
$$
\end{proof}

\subsection{Lemma of Johnson-Lindenstrauss}

The effect of Concentration of measure has far reaching consequences. We will present only one of them, called Lemma of Johnson and Lindenstrauss.

We denote until the end of this section
\begin{equation}\label{eq:defA}
A=\frac{1}{\sqrt m}\left(\begin{array}{ccc}\omega_{1,1}& \dots & \omega_{1n}\\
\vdots&\ddots&\vdots\\\omega_{m1}&\dots&\omega_{mn}\end{array}\right),
\end{equation}
where $\omega_{ij}, i=1,\dots,m, j=1,\dots,n$, are i.i.d. standard normal variables.
%We shall show that such a matrix satisfies the RIP with reasonably small constants
%with high probability.

Using 2-stability of the normal distribution, Lemma \ref{Lemma:Con} shows immediately that $A$ defined as in \eqref{eq:defA}
acts with high probability as isometry on one fixed $x\in\R^n.$

\begin{theo}\label{thm:conx} Let $x\in \R^n$ with $\|x\|_2=1$ and let $A$ be as in \eqref{eq:defA}. Then 
\begin{equation}\label{eq:con1}
{\mathbb P}\Bigl(\Bigl|\|Ax\|_2^2-1\Bigr|\ge t\Bigr)\le 2e^{-\frac{m}{2}[t^2/2-t^3/3]}\le 2e^{-Cmt^2}
\end{equation}
for $0<t< 1$ with an absolute constant $C>0.$
\end{theo}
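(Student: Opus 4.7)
The plan is to reduce $\|Ax\|_2^2$ to a sum of independent squared standard normals and then invoke Lemma \ref{Lemma:Con}. First I would look at the individual coordinates of $Ax$. The $i$-th coordinate is
$$
(Ax)_i=\frac{1}{\sqrt m}\sum_{j=1}^n \omega_{ij}x_j,
$$
and the 2-stability property from Lemma \ref{lem:N}(ii), together with the assumption $\|x\|_2=1$, gives that the inner sum is distributed as a standard normal variable $g_i$. Since different rows of $A$ are built from disjoint families of the i.i.d.\ entries $\omega_{ij}$, the resulting $g_1,\dots,g_m$ are themselves independent. Therefore
$$
\|Ax\|_2^2=\frac{1}{m}\sum_{i=1}^m g_i^2,
$$
so the event $\bigl|\|Ax\|_2^2-1\bigr|\ge t$ becomes exactly the event that a chi-squared variable with $m$ degrees of freedom deviates from its mean $m$ by at least $tm$.

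Next I would split this into the two tails,
$$
\P\bigl(|\|Ax\|_2^2-1|\ge t\bigr)=\P\Bigl(\sum_{i=1}^m g_i^2\ge (1+t)m\Bigr)+\P\Bigl(\sum_{i=1}^m g_i^2\le (1-t)m\Bigr),
$$
and apply Lemma \ref{Lemma:Con} to each tail with $\varepsilon=t\in(0,1)$. Summing the two bounds yields the first estimate in \eqref{eq:con1}.

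Finally, for the second bound $2e^{-Cmt^2}$, I would use the elementary inequality $t^3/3<t^2/3$ on $(0,1)$, which gives $t^2/2-t^3/3>t^2/6$. Hence $e^{-\frac{m}{2}[t^2/2-t^3/3]}\le e^{-mt^2/12}$, so one may take $C=1/12$.

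There is no genuinely hard step here; the theorem is really a chaining of the two preceding lemmas. The only point that demands a little care is the independence of the coordinates $g_i$: one must notice that distinct rows of $A$ involve disjoint subsets of the $\omega_{ij}$, so that applying 2-stability row by row yields jointly independent standard normals and not merely marginally normal ones.
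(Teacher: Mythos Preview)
Your proposal is correct and follows essentially the same route as the paper's proof: reduce $\|Ax\|_2^2$ via 2-stability (Lemma~\ref{lem:N}(ii)) to $\frac{1}{m}\sum_{i=1}^m g_i^2$ with i.i.d.\ standard normals $g_i$, split into the two tails, and apply Lemma~\ref{Lemma:Con}; the paper also records $C=1/12$ for the second inequality. If anything, you are slightly more explicit than the paper about why the $g_i$ are jointly independent.
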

\begin{proof}
Let $x=(x_1,x_2,\dots,x_n)^T$. Then we get by the 2-stability of normal distribution and Lemma \ref{Lemma:Con}
\begin{align*}
{\mathbb P}&\Bigl(\Bigl|\|Ax\|_2^2-1\Bigr|\ge t\Bigr)\\
&={\mathbb P}\Bigl(\bigl| (\omega_{1,1}x_1+\dots+\omega_{1n}x_n)^2+\dots+(\omega_{m1}x_1+\dots+\omega_{mn}x_n)^2-m\bigr|\ge mt\Bigr)\\
&={\mathbb P}\Bigl(\bigl|\omega_1^2+\dots+\omega_m^2-m\bigr|\ge mt\Bigr)\\
&={\mathbb P}\Bigl(\omega_1^2+\dots+\omega_m^2\ge m(1+t)\Bigr)+{\mathbb P}\Bigl(\omega_1^2+\dots+\omega_m^2\le  m(1-t)\Bigr)\\
&\le 2e^{-\frac{m}{2}[t^2/2-t^3/3]}.%\le 2e^{-Cmt^2}.
\end{align*}
This gives the first inequality in \eqref{eq:con1}. The second one follows by simple algebraic manipulations (for $C=1/12$).
\end{proof}
\begin{rem} (i) Observe, that \eqref{eq:con1} may be easily rescaled to
\begin{equation}\label{eq:con2}
{\mathbb P}\Bigl(\Bigl|\|Ax\|_2^2-\|x\|^2_2\Bigr|\ge t\|x\|^2_2\Bigr)\le 2e^{-Cmt^2},
\end{equation}
which is true for every $x\in\R^n$.\\
(ii) A slightly different proof of \eqref{eq:con1} is based on the rotational invariance of the distribution underlying the random structure of matrices defined by \eqref{eq:defA}.
Therefore, it is enough to prove \eqref{eq:con1} only for one fixed element $x\in\R^n$ with $\|x\|_2=1$. Taking $x=e_1=(1,0,\dots,0)^T$ to be the first canonical unit vector
allows us to use Lemma \ref{Lemma:Con} without the necessity of applying the 2-stability of normal distribution.
\end{rem}

Lemma of Johnson and Lindenstrauss states that a set of points in a high-dimensional space
can be embedded into a space of much lower dimension in such a way that the mutual distances between the points are nearly preserved.

\begin{lemma}\label{Lemma:JL} (Lemma of Johnson and Lindenstrauss). Let $0<\varepsilon<1$ and let $m,N$ and $n$ be natural numbers with
$$
m\ge 4(\varepsilon^2/2-\varepsilon^3/3)^{-1}\ln N.
$$ 
Then for every set $\{x^1,\dots,x^N\}\subset \R^n$ there exists a mapping $f:\R^n\to \R^m$, such that
\begin{equation}\label{eq:JL}
(1-\varepsilon)\|x^i-x^j\|_2^2\le \|f(x^i)-f(x^j)\|_2^2\le (1+\varepsilon)\|x^i-x^j\|_2^2,\qquad i,j\in\{1,\dots,N\}.
\end{equation}
\end{lemma}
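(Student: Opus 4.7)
The plan is probabilistic: construct $f$ as the random linear map $x \mapsto Ax$ with $A$ defined as in \eqref{eq:defA}, and show that with positive probability this particular $f$ preserves all pairwise distances. Since $f$ is linear, the condition \eqref{eq:JL} reduces to controlling $\|A(x^i-x^j)\|_2^2$ simultaneously for all $\binom{N}{2}$ difference vectors $x^i-x^j$.

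First I would fix a pair $i\neq j$ and apply the rescaled concentration inequality \eqref{eq:con2} from the remark to the vector $x^i-x^j$ with $t=\varepsilon$. This gives
$$
{\mathbb P}\Bigl(\bigl|\|A(x^i-x^j)\|_2^2-\|x^i-x^j\|_2^2\bigr|\ge \varepsilon\|x^i-x^j\|_2^2\Bigr)\le 2e^{-\frac{m}{2}[\varepsilon^2/2-\varepsilon^3/3]},
$$
which is exactly the failure probability that the $(i,j)$-pair violates \eqref{eq:JL}. (Pairs with $x^i=x^j$ are trivially fine.)

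Next I would take the union bound over all $\binom{N}{2}\le N^2/2$ pairs, so that the probability that at least one pair fails is at most
$$
\frac{N(N-1)}{2}\cdot 2e^{-\frac{m}{2}[\varepsilon^2/2-\varepsilon^3/3]}\le N^2 e^{-\frac{m}{2}[\varepsilon^2/2-\varepsilon^3/3]}.
$$
Plugging in the assumption $m\ge 4(\varepsilon^2/2-\varepsilon^3/3)^{-1}\ln N$ gives $\frac{m}{2}[\varepsilon^2/2-\varepsilon^3/3]\ge 2\ln N$, hence the bound above is at most $N^2\cdot N^{-2}=1$. With a strict inequality in $m$ (or equivalently a small slack, noting that we may assume $N\ge 2$ so that the factor $\tfrac{1}{2}(1-1/N)<1$ provides the needed room) the total failure probability is strictly less than $1$.

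Therefore there exists a realization $\omega$ of the random matrix $A$ for which \eqref{eq:JL} holds for every pair $i,j\in\{1,\dots,N\}$; taking $f(x):=A(\omega)x$ finishes the proof. No step is a real obstacle here — the work was done in Theorem~\ref{thm:conx}; the only thing to check carefully is that the exponent gained from the hypothesis on $m$ dominates the $\log N^2$ from the union bound, which is precisely the role of the factor $4$ in the lower bound on $m$.
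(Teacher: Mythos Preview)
Your argument is correct and follows essentially the same route as the paper: take $f=A$ with $A$ the scaled Gaussian matrix, apply the concentration estimate of Theorem~\ref{thm:conx} (in its rescaled form \eqref{eq:con2}) to each difference $x^i-x^j$, and use a union bound over $\binom{N}{2}$ pairs together with the hypothesis on $m$ to force the failure probability strictly below $1$. The only cosmetic difference is that the paper normalizes to a unit vector $z=(x^i-x^j)/\|x^i-x^j\|_2$ before invoking Theorem~\ref{thm:conx}, and obtains the needed strict inequality directly from $2\binom{N}{2}=N(N-1)<N^2$ (your factor $\tfrac12(1-1/N)$ should read $1-1/N$, but the conclusion is the same).
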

\begin{proof}
We put $f(x)=Ax$, where again
$$
Ax=\frac{1}{\sqrt m}\left(\begin{array}{ccc}\omega_{1,1}& \dots & \omega_{1n}\\
\vdots&\ddots&\vdots\\\omega_{m1}&\dots&\omega_{mn}\end{array}\right)x,
$$
and $\omega_{ij}, i=1,\dots,m, j=1,\dots,n$ are i.i.d. standard normal variables. 
We show that with this choice $f$ satisfies \eqref{eq:JL} with positive probability. This proves the existence of such a mapping.

Let $i,j\in\{1,\dots,N\}$ arbitrary with $x^i\not=x^j.$ Then we put $z=\frac{x^i-x^j}{\|x^i-x^j\|_2}$ and evaluate the probability that 
the right hand side inequality in \eqref{eq:JL} does not hold. Theorem \ref{thm:conx} then implies
\begin{align*}
{\mathbb P}\Bigl(\Bigl|\|f(x^i)-f(x^j)\|_2^2-\|x^i-x^j\|_2^2\Bigr|>\varepsilon \|x^i-x^j\|_2^2\Bigr)&={\mathbb P}\Bigl(\Bigl|\|Az\|^2-1\Bigr|>\varepsilon\Bigr)\\
&\le e^{-\frac{m}{2}[\varepsilon^2/2-\varepsilon^3/3]}.
\end{align*}
The same estimate is also true for all $\binom{N}{2}$ pairs $\{i,j\}\subset \{1,\dots,N\}$ with $i\not=j.$
The probability, that one of the inequalities in \eqref{eq:JL} is not satisfied is therefore at most
$$
2\cdot\binom{N}{2}\cdot e^{-\frac{m}{2}[\varepsilon^2/2-\varepsilon^3/3]}< N^2 \cdot e^{-\frac{m}{2}[\varepsilon^2/2-\varepsilon^3/3]}=
\exp\Bigl(2\ln N-\frac{m}{2}[\varepsilon^2/2-\varepsilon^3/3]\Bigr)\le e^0=1
$$
for $m\ge 4(\varepsilon^2/2-\varepsilon^3/3)^{-1}\ln N$. Therefore, the probability that \eqref{eq:JL} holds for all $i,j\in\{1,\dots,N\}$
is positive and the result follows.
\end{proof}

\newpage

\section{Matrix recovery with RIP}\label{sec:2}

The aim of this section is to show, how randommnes can be used in a problem called ``matrix recovery''.
We start by introducing the problem, recalling few basic facts from the theory of compressed sensing
and giving some elementary notions from linear algebra. In the rest of this section we then
present the most simple (but not the most effective) way of reconstructing a low-rank matrix from
a small number of linear measurements.

\subsection{Introduction}

The aim of this course is to show, how low-rank matrices can be reconstructed from only a limited amount
of (linear) information. The key is to combine in an efficient way the structural assumption on the matrix
with the limited information available. In this aspect, it resembles very much the area of compressed sensing,
from which it indeed profited. Before we formalize the setting of low-rank matrix recovery, we will therefore
describe the basic aspects of compressed sensing. We present only few of the most important results,
the (largely self-contained) proofs may be found for example in \cite{BCKV}.

\subsubsection{Briefly about compressed sensing}

Compressed sensing (in its extremely simplified form) studies underdetermined systems of linear equations $Ax=y$,
where $y\in\R^m$ and $A\in\R^{m\times N}$ are given and we look for the solution $x\in\R^N$.
From linear algebra we know that if $m<N$, this system might have none or many solutions.
The crucial ingredient of compressed sensing (motivated by experience from many different areas of applied science)
is to assume that the unknown solution $x$ is \emph{sparse}, i.e. it has only few non-zero entries. We denote by
\begin{equation}\label{eq:norm0}
\|x\|_0:=|\{j\in\{1,\dots,N\}:x_j\not=0\}|
\end{equation}
the number of non-zero entries of $x$. Furthermore, vector $x\in\R^N$ is called $k$-sparse, if $\|x\|_0\le k.$
Compressed sensing then studies if the equation $Ax=y$ has, for given $A\in\R^{m\times N}$ and $y\in\R^m$,
an $k$-sparse solution $x$, if it is unique, and how can it be found.

Unfortunately, in this general form this problem is NP-hard. Nevertheless, for \emph{some} inputs, i.e. for \emph{some} matrices $A\in\R^{m\times N}$
and \emph{some} right-hand sides $y\in\R^m$, the task can be solved in polynomial time, by $\ell_1$-minimization 

\[
\min_z \|z\|_1\quad \text{s.t.}\quad y=Az. \tag{$P_1$} \label{eq:P_1}
\]

Let us recall that
\begin{equation}
\|x\|_p=\begin{cases}\displaystyle\Bigl(\sum_{j=1}^N|x_j|^p\Bigr)^{1/p}&\quad \text{for}\ 0<p<\infty,\\
\displaystyle\max_{j=1,\dots,N}|x_j|&\quad\text{for}\ p=\infty.
\end{cases}
\end{equation}
The analysis of compressed sensing is nowadays typically performed using two notions, the \emph{Null Space Property}
and the \emph{Restricted Isometry Property}.

Obviously, we can not recovery $x$ from $A$ and $y$ only, if $y=Ax$ is zero. The recovery is therefore impossible,
if sparse vectors lie in the kernel of $A$. Actually, the notion of NSP shows that the recovery is possible, if the vectors
from the kernel of $A$ are well spread and do not attain large values on a small set of indices.

\begin{definition} Let $A\in \R^{m\times N}$ and let $k\in\{1,\dots,N\}$. Then $A$ is said to have the \emph{Null Space Property} (NSP)
of order $k$ if
\begin{equation}\label{eq:def:NSP}
\|v_T\|_1<\|v_{T^c}\|_1\quad \text{for all}\ v\in \ker\, A\setminus\{0\}\ \text{and all}\ T\subset\{1,\dots,N\}\ \text{with}\ |T|\le k.
\end{equation}
\end{definition}

\begin{theo}\label{thm:NSP} Let $A\in\R^{m\times N}$ and let $k\in\{1,\dots,N\}$. Then every $k$-sparse vector $x$ is the unique solution of $(P_1)$ with $y=Ax$ if, and only if,
$A$ has the NSP of order $k$.
\end{theo}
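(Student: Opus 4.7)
The plan is to prove both directions by a standard support-splitting argument that converts $\ell_1$-comparisons on $\R^N$ into the NSP inequality on the kernel of $A$. In both implications, the key observation is that if two vectors $x,z$ satisfy $Ax=Az$, then their difference $v=x-z$ lies in $\ker A$, and conversely every $v\in\ker A$ can be written as such a difference in many ways by splitting its support.

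For the direction \emph{NSP implies unique $(P_1)$-recovery}, I would let $x$ be $k$-sparse, set $T$ to be its support (so $|T|\le k$), and take any $z\ne x$ with $Az=Ax$. Writing $v=x-z\in\ker A\setminus\{0\}$, I would decompose
\[
\|z\|_1=\|z_T\|_1+\|z_{T^c}\|_1=\|x_T-v_T\|_1+\|v_{T^c}\|_1,
\]
using that $x$ vanishes on $T^c$. The reverse triangle inequality gives $\|x_T-v_T\|_1\ge \|x_T\|_1-\|v_T\|_1=\|x\|_1-\|v_T\|_1$, so
\[
\|z\|_1\ge \|x\|_1+(\|v_{T^c}\|_1-\|v_T\|_1)>\|x\|_1
\]
by the NSP applied to $v$ and $T$. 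Hence $x$ is the unique minimizer of $(P_1)$.

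For the converse, I would start from $v\in\ker A\setminus\{0\}$ and an arbitrary $T\subset\{1,\dots,N\}$ with $|T|\le k$, and manufacture two competing solutions of a single $(P_1)$-instance. The natural choice is $x:=v_T$ (which is $k$-sparse) and $z:=-v_{T^c}$; since $v_T+v_{T^c}=v\in\ker A$, we have $Az=-Av_{T^c}=Av_T=Ax$, so both lie in the feasible set of $(P_1)$ with $y=Ax$. Note $z\ne x$, because $v_T$ is supported in $T$ and $-v_{T^c}$ in $T^c$, and equality would force $v=0$. By the assumed uniqueness of the $k$-sparse $\ell_1$-minimizer, $\|x\|_1<\|z\|_1$, i.e.\ $\|v_T\|_1<\|v_{T^c}\|_1$, which is exactly the NSP of order $k$.

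The only subtlety is getting \emph{strict} inequalities in both directions, and in each case strictness is built into one of the hypotheses: in the first direction it comes from the strict ``$<$'' in the definition of NSP (together with $v\ne0$), while in the second it comes from the uniqueness of the $\ell_1$-minimizer (together with $z\ne x$). No optimization or duality machinery is needed; the argument is purely combinatorial plus the triangle inequality, which makes it the easiest entry point into the compressed sensing toolbox that the next subsection will build upon.
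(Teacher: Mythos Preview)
Your argument is correct in both directions; the support-splitting and the triangle inequality give exactly the strict comparisons you need, and your handling of the $z\ne x$ issue in the converse is clean.

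As for comparison with the paper: the paper does \emph{not} actually prove Theorem~\ref{thm:NSP}. It states the result and refers the reader to \cite{BCKV} for the proof. However, the paper does prove the matrix analogue (the rank-$r$ NSP theorem in Section~2.2), and the structure there is identical to yours: for the direction ``recovery $\Rightarrow$ NSP'' it takes $M\in\ker\X\setminus\{0\}$, splits $M=M_1+M_2$ along the top-$r$ versus tail singular values (your $v_T$ and $v_{T^c}$), observes that $\X(M_1)=\X(-M_2)$, and invokes uniqueness; for ``NSP $\Rightarrow$ recovery'' it sets $M=A-Z$ and uses a Lidskii-type triangle inequality on singular values (your reverse triangle inequality on $\|x_T-v_T\|_1$). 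So your proof is precisely the vector-case template that the paper later lifts to matrices, and it would slot in as the missing proof with no changes.
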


Although quite simple, Theorem \ref{thm:NSP} indeed describes the heart of compressed sensing.
In signal processing we often assume (by our experience or intuition) that the incoming unknown signal $x\in\R^N$ is sparse (or nearly sparse).
Theorem \ref{thm:NSP} then tells that if we use a sensing device which acquires only $m$ linear measurements of $x$, then
we can reconstruct $x$ from $A$ (which we of course know) and the measurements $y$ by a convex optimization problem $(P_1)$.
The necessary and sufficient condition for success is that the (newly designed) sensing device satisfies the NSP.

Although nice in the theory, Theorem \ref{thm:NSP} has one important drawback. For a given matrix $A$ it is not easy to check
if it has NSP, or not. The way out is to show, that if $A$ has another property called RIP (see below) then it has also NSP.

\begin{definition}
Let $A\in\R^{m\times N}$ and let $k\in\{1,\dots,N\}$. Then the \emph{restricted isometry constant} $\delta_k=\delta_k(A)$ of $A$ of order $k$ is the smallest $\delta\ge 0$, such that
\begin{equation}\label{eq:RIP}
(1-\delta)\|x\|_2^2\le \|Ax\|_2^2\le (1+\delta)\|x\|_2^2\quad \text{for all}\ x\in\R^N \ \text{with}\ \|x\|_0\le k.
\end{equation}
Furthermore, we say that $A$ satisfies the \emph{Restricted Isometry Property} (RIP) of order $k$ with the constant $\delta_k$ if $\delta_k<1.$
\end{definition}

If the matrix has RIP, it indeed has also NSP and the sparse recovery by $(P_*)$ succeeds.

\begin{theo}\label{thm:RIPNSP}
Let $A\in\R^{m\times N}$ and let $k$ be a natural number with $k\le N/2.$ If $\delta_{2k}(A)<1/3$, then $A$ has the NSP of order $k$.
\end{theo}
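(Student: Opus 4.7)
My plan is to verify NSP directly by showing that for every $v\in\ker A\setminus\{0\}$ one has $\|v_{T_0}\|_1<\|v_{T_0^c}\|_1$, where $T_0\subset\{1,\dots,N\}$ indexes the $k$ largest entries (in absolute value) of $v$. Since $T_0$ realizes the maximum of $\|v_T\|_1$ over all $|T|\le k$ (while $\|v_{T_0^c}\|_1$ is correspondingly minimal), this already implies \eqref{eq:def:NSP} in its full form.

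Two small ingredients come first. The first is a block decomposition of $T_0^c$: order the remaining indices by decreasing $|v_i|$ and cut them into consecutive blocks $T_1,T_2,\dots$ of size $k$ (the last possibly smaller). The standard bookkeeping estimate then reads $\|v_{T_j}\|_2\le k^{-1/2}\|v_{T_{j-1}}\|_1$ for $j\ge 1$, because every coordinate of $v_{T_j}$ is bounded by the average of $|v|$ over $T_{j-1}$; summing yields $\sum_{j\ge 2}\|v_{T_j}\|_2\le k^{-1/2}\|v_{T_0^c}\|_1$. The second ingredient is an off-support consequence of RIP: if $u,u'$ are $k$-sparse with disjoint supports, then $|\langle Au,Au'\rangle|\le\delta_{2k}\|u\|_2\|u'\|_2$. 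I would obtain this from the parallelogram identity $4\langle Au,Au'\rangle=\|A(u+u')\|_2^2-\|A(u-u')\|_2^2$ by applying \eqref{eq:RIP} of order $2k$ to the $2k$-sparse vectors $u\pm u'$ and rescaling to unit norm.

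With these tools the main chain is short. Because $Av=0$ we have $Av_{T_0\cup T_1}=-\sum_{j\ge 2}Av_{T_j}$, so the lower RIP bound of order $2k$ combined with the off-support inequality applied to each inner product gives
\[
(1-\delta_{2k})\|v_{T_0\cup T_1}\|_2^2\le\|Av_{T_0\cup T_1}\|_2^2\le\delta_{2k}(\|v_{T_0}\|_2+\|v_{T_1}\|_2)\sum_{j\ge 2}\|v_{T_j}\|_2.
\]
Using $\|v_{T_0}\|_2+\|v_{T_1}\|_2\le\sqrt{2}\,\|v_{T_0\cup T_1}\|_2$ together with the block estimate, and cancelling one factor of $\|v_{T_0\cup T_1}\|_2>0$ (nonzero because $v\ne 0$ forces $v_{T_0}\ne 0$, $T_0$ being the top-$k$ indices), I arrive at
\[
\|v_{T_0}\|_1\le\sqrt{k}\,\|v_{T_0}\|_2\le\sqrt{k}\,\|v_{T_0\cup T_1}\|_2\le\frac{\sqrt{2}\,\delta_{2k}}{1-\delta_{2k}}\,\|v_{T_0^c}\|_1.
\]

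The hypothesis $\delta_{2k}<1/3$ enters only at the very end: the map $\delta\mapsto\sqrt{2}\,\delta/(1-\delta)$ is increasing and equals $\sqrt{2}/2<1$ at $\delta=1/3$, so the displayed inequality is strict and NSP of order $k$ follows. I expect the main obstacle to be the off-support RIP bound together with the tactical choice to treat $T_0\cup T_1$ as a single $2k$-sparse block rather than $T_0$ alone; the $\sqrt{2}$ this saves (instead of a factor $2$) is precisely what makes the threshold $1/3$ sufficient rather than something strictly weaker.
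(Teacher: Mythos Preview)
Your argument is correct. The paper does not prove Theorem~\ref{thm:RIPNSP} in the compressed-sensing section but later gives the proof of its rank-$r$ analogue (``If $\delta_{2r}<1/3$ then $\mathcal X$ has rank-$r$ NSP''), explicitly noting that the vector proof is the same with $\|\cdot\|_F$ replaced by $\|\cdot\|_2$ and $\|\cdot\|_*$ by $\|\cdot\|_1$; your proof follows that template.

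The one tactical difference is which block sits on the left. The paper applies the lower RIP bound to the single $k$-sparse piece $v_{T_0}$, writes $Av_{T_0}=-\sum_{j\ge 1}Av_{T_j}$, and obtains $\|v_{T_0}\|_1\le\dfrac{\delta_{2k}}{1-\delta_k}\,\|v\|_1<\tfrac{1}{2}\|v\|_1$, from which NSP follows. You instead apply the lower RIP bound to the $2k$-sparse piece $v_{T_0\cup T_1}$ and pick up the Cauchy--Schwarz factor $\sqrt{2}$, arriving at $\|v_{T_0}\|_1\le\dfrac{\sqrt{2}\,\delta_{2k}}{1-\delta_{2k}}\,\|v_{T_0^c}\|_1$. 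Both routes use the same ingredients (block decomposition in decreasing order, the off-support inequality $|\langle Au,Au'\rangle|\le\delta_{2k}\|u\|_2\|u'\|_2$, and the shelling estimate $\|v_{T_j}\|_2\le k^{-1/2}\|v_{T_{j-1}}\|_1$) and both land exactly at the threshold $\delta_{2k}<1/3$; the difference is cosmetic rather than substantive.
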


What remains is to discuss how to construct matrices with small RIP constants. Although a huge effort was invested into
the search for deterministic constructions, the most simple and most effective way of producing RIP matrices is to use random matrices.
In the most simple case (which is unfortunately not always suitable for applications) one can draw each entry of a matrix
independently from some fixed distribution. We will describe the analysis in the case of random Gaussian matrices given by

\begin{equation}\label{eq:ndefA}
A=\frac{1}{\sqrt m}\left(\begin{array}{ccc}\omega_{1,1}& \dots & \omega_{1N}\\
\vdots&\ddots&\vdots\\\omega_{m1}&\dots&\omega_{mN}\end{array}\right),
\end{equation}
where $\omega_{ij}, i=1,\dots,m, j=1,\dots,N$, are i.i.d. standard normal variables.

Finally, the following theorem shows that such random matrices indeed satisfy the RIP with high probability
if $m$ is large enough - it grows linearly with the sparsity level $k$, logarithmically with the underlying dimension $N$,
and logarithmically with the desired confidence level $1/\varepsilon$. It is especially the logarithmic dependence on $N$
what makes these results attractive for the analysis of high-dimensional phenomena.

\begin{theo}\label{thm:RIP}
Let $N\ge m\ge k\ge 1$ be natural numbers and let $0<\varepsilon<1$ and $0<\delta<1$ be real numbers with
\begin{equation}\label{eq:tech1}
m\ge C\delta^{-2}\Bigl(k\ln(eN/k)+\ln(2/\varepsilon)\Bigr),
\end{equation}
where $C>0$ is an absolute constant. Let $A$ be again defined by \eqref{eq:ndefA}. Then
\[
{\mathbb P}\bigl(\delta_k(A)\le \delta\bigr)\ge 1-\varepsilon.
\]
\end{theo}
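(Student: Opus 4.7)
The plan is to derive the RIP bound from the single-vector concentration inequality of Theorem~\ref{thm:conx} by a classical covering/union-bound argument. Note first that $\delta_k(A)\le\delta$ is equivalent to
$$\sup_{\substack{T\subset\{1,\dots,N\}\\|T|\le k}}\ \sup_{\substack{y\in\R^T\\\|y\|_2=1}}\bigl|\|A_Ty\|_2^2-1\bigr|\le \delta,$$
where $A_T$ is the submatrix of $A$ with columns indexed by $T$. The outer supremum ranges over at most $\binom{N}{k}\le(eN/k)^k$ sets, and the inner one is over the unit sphere of a $k$-dimensional subspace.

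I would first pass from the unit sphere in $\R^T$ to a finite net. For $\eta=1/4$, a standard volume argument produces an $\eta$-net $\mathcal N_T$ of the unit sphere of $\R^T$ with $|\mathcal N_T|\le(1+2/\eta)^k=9^k$. For any symmetric operator $M$ on $\R^T$ one has $\sup_{\|y\|_2=1}|y^TMy|\le (1-2\eta)^{-1}\max_{y\in\mathcal N_T}|y^TMy|$; applied to $M=A_T^TA_T-I$ this tells us that it suffices to enforce
$$\bigl|\|A_Ty\|_2^2-1\bigr|\le \delta/2\qquad \text{for every } y\in\mathcal N_T \text{ and every admissible } T.$$

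Next I would apply Theorem~\ref{thm:conx} to each individual net point with $t=\delta/2$, which gives failure probability at most $2e^{-Cm\delta^2/4}$ per point, and then union-bound over all pairs $(T,y)$:
$$\mathbb P\bigl(\delta_k(A)>\delta\bigr)\le 2\binom{N}{k}\cdot 9^k\cdot e^{-Cm\delta^2/4}\le 2\exp\!\Bigl(k\ln(eN/k)+k\ln 9-\tfrac{Cm\delta^2}{4}\Bigr).$$
Requiring the right-hand side to be at most $\varepsilon$ and absorbing the $k\ln 9$ term into $k\ln(eN/k)$ (which is bounded below by $k$ since $N\ge k$), at the cost of enlarging the absolute constant, yields exactly the condition \eqref{eq:tech1}.

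The only non-routine step is the net-to-sphere transfer for the symmetric matrix $A_T^TA_T-I$; once that lemma is invoked, the rest is bookkeeping using the standard bounds $\binom{N}{k}\le(eN/k)^k$ and $|\mathcal N_T|\le(1+2/\eta)^k$. I expect no serious obstacle beyond carefully tracking the absolute constants, so that the resulting $C$ in \eqref{eq:tech1} is explicit and the hypotheses $0<\delta<1$ keep us in the regime $0<t<1$ where Theorem~\ref{thm:conx} applies.
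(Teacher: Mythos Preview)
The paper does not actually prove Theorem~\ref{thm:RIP}; it is stated as background from compressed sensing with the remark that ``the (largely self-contained) proofs may be found for example in \cite{BCKV}.'' So there is no in-paper proof to compare against directly.

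That said, your argument is correct and is precisely the standard proof: single-vector concentration (Theorem~\ref{thm:conx}), an $\eta$-net on each $k$-dimensional coordinate sphere, and a union bound over the $\binom{N}{k}$ supports. It also mirrors the structure the paper does spell out for the matrix analogue in Section~2.4 (concentration for one fixed matrix, net construction, union bound via Lemma~\ref{lem:norm_net}). One minor difference worth noting: you pass from the net to the sphere via the quadratic-form estimate $\sup_{\|y\|_2=1}|y^TMy|\le(1-2\eta)^{-1}\max_{y\in\mathcal N}|y^TMy|$ applied to $M=A_T^TA_T-I$, whereas the paper's Lemma~\ref{lem:norm_net} is stated for the linear quantity $\|Ax\|_2$. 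Your version is the cleaner tool here, since $A\mapsto\bigl|\|A_Ty\|_2^2-1\bigr|$ is not itself a norm, and the paper's invocation of Lemma~\ref{lem:norm_net} in the analogous Step~3 is somewhat informal on this point. No gaps; your bookkeeping with the constants is fine.
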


Two important aspects of compressed sensing are not discussed here at all, namely stability and robustness.
By stability we mean, that the unknown vector $x$ does not have to be exactly sparse, it might have only few large
entries and a long tail of negligible ones. Robustness of the methods corresponds to the fact that the measurements
might be corrupted by some additional noise. Although we do not give any details on that, we just mention that the
results of compressed sensing can be adapted to accomplish both these challenges.

\subsubsection{Briefly about matrices}

%\begin{itemize}
%\item Matrices in general, rank, kernel
%\item Matrix norms, Frobenius, nuclear, operator
%\end{itemize}

If $A\in\R^{m\times N}$ then there is a factorization $A=U\Sigma V^T$, where $U\in\R^{m\times m}$ is an orthogonal matrix,
$\Sigma\in\R^{m\times N}$ is a diagonal matrix with non-negative \emph{singular values} $\sigma_1(A)\ge\sigma_2(A)\ge \dots\ge \sigma_m(A)\ge 0$ on the diagonal,
and $V\in\R^{N\times N}$ is also an orthogonal matrix.

If the matrix $A\in\R^{m\times N}$ has $\rank(A)=r\le m\le N$, we may prefer the so-called ``compact SVD'' $A=U\Sigma V^T$, where
$U\in \R^{m\times r}$ has $r$ mutually orthonormal columns,
$\Sigma\in\R^{r\times r}$ with $\sigma_1(A)\ge \sigma_2(A)\ge \dots\ge \sigma_r(A)>0$ are the non-zero singular values of $A$
and $V\in \R^{N\times r}$ has also $r$ orthonormal columns.
If we denote the columns of $U$ by $u_1,\dots,u_r$ and the columns of $V$ by $v_1,\dots,v_r$, we obtain
\begin{equation}\label{eq:SVDr}
Ax=\sum_{j=1}^r \sigma_j(A) \langle v_j,x\rangle u_j.
\end{equation}

\begin{definition}
Let $A,B\in\R^{m\times N}$. We define the Frobenius (very often called also Hilbert-Schmidt) scalar product of $A$ and $B$ as $\langle A,B\rangle_F:=\sum_{j=1}^m\sum_{k=1}^NA_{j,k}B_{j,k}$.
Similarly, $\|A\|_F:=\sqrt{\langle A,A\rangle_F}$ is called the Frobenius (or Hilbert-Schmidt) norm.
\end{definition}
Let us observe that
\begin{align*}
\tr (A^TB)&=\sum_{k=1}^N (A^TB)_{k,k}=\sum_{k=1}^N\sum_{j=1}^m (A^T)_{k,j}B_{j,k}=\sum_{j=1}^m\sum_{k=1}^NA_{j,k}B_{j,k}\\
&=\langle A,B\rangle_F=\langle B,A\rangle_F=\tr(B^TA).
\end{align*}
Similarly (keyword: \emph{trace is cyclic}) we obtain for $A\in\R^{m\times N}$ and $B\in\R^{N\times m}$ also
$$
\tr(AB)=\sum_{j=1}^m (AB)_{j,j}=\sum_{j=1}^m\sum_{k=1}^N A_{j,k}B_{k,j}=\sum_{k=1}^N\sum_{j=1}^m B_{k,j}A_{j,k}=\sum_{k=1}^N (BA)_{k,k}=\tr(BA).
$$
We then obtain that any two of the expressions $\tr(ABC)$, $\tr(CAB)$ and $\tr(BCA)$ are equal if they are well defined.
\begin{lemma}\label{lem:Pietsch} Let $A\in\R^{m\times N}$. Let $(\varphi_j)_{j=1}^N$ and $(\psi_j)_{j=1}^N$ be two orthonormal basis of $\R^N$. Then
$$
\sum_{j=1}^N\|A\varphi_j\|_2^2=\sum_{j=1}^N\|A\psi_j\|_2^2=\|A\|_F^2.
$$
\end{lemma}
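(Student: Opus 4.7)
The plan is to show that the sum $\sum_{j=1}^N \|A\varphi_j\|_2^2$ does not depend on the choice of orthonormal basis by expressing it as a trace, and then to identify that trace with $\|A\|_F^2$ by computing in the canonical basis. Both steps are short and use only the cyclicity of the trace and the completeness relation $\sum_{j=1}^N \varphi_j\varphi_j^T = I$ for an orthonormal basis.

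First I would rewrite each term as $\|A\varphi_j\|_2^2 = \langle A\varphi_j, A\varphi_j\rangle = \varphi_j^T A^T A \varphi_j$. Since this is a scalar, it equals its own trace, so by the cyclicity of the trace discussed just before the lemma,
$$\varphi_j^T A^T A \varphi_j = \tr(\varphi_j^T A^T A \varphi_j) = \tr(A^T A \varphi_j \varphi_j^T).$$
Summing over $j$ and pulling the trace and $A^T A$ outside the sum,
$$\sum_{j=1}^N \|A\varphi_j\|_2^2 = \tr\Bigl(A^T A \sum_{j=1}^N \varphi_j\varphi_j^T\Bigr).$$
Since $(\varphi_j)_{j=1}^N$ is an orthonormal basis of $\R^N$, the operator $\sum_j \varphi_j\varphi_j^T$ is the identity on $\R^N$, so the right-hand side equals $\tr(A^T A)$. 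The same computation with $(\psi_j)$ in place of $(\varphi_j)$ gives the same value, which establishes the first equality.

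It remains to identify $\tr(A^T A)$ with $\|A\|_F^2$. Using the identity $\tr(A^T B) = \langle A, B\rangle_F$ derived in the paragraph preceding the lemma, with $B = A$, we get $\tr(A^T A) = \langle A, A\rangle_F = \|A\|_F^2$. Alternatively, choosing the canonical basis $(e_j)$ in the first identity yields $\sum_j \|Ae_j\|_2^2 = \sum_j \sum_i A_{ij}^2 = \|A\|_F^2$, which makes the identification direct.

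There is no real obstacle here; the only point that deserves care is writing $\sum_j \varphi_j \varphi_j^T = I$, which is the completeness relation for an orthonormal basis and can, if desired, be verified by checking its action on an arbitrary $x \in \R^N$ via the expansion $x = \sum_j \langle \varphi_j, x\rangle \varphi_j$.
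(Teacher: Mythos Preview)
Your proof is correct and complete. It takes a different route from the paper's own argument, though both are short.

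The paper works by a direct change-of-basis computation: it expands each $\psi_j$ in the basis $(\varphi_k)$, writes $\|A\psi_j\|_2^2$ as a double sum $\sum_{k,l}\langle\psi_j,\varphi_k\rangle\langle\psi_j,\varphi_l\rangle\langle A\varphi_k,A\varphi_l\rangle$, sums over $j$, and then uses Parseval's identity $\sum_j\langle\psi_j,\varphi_k\rangle\langle\psi_j,\varphi_l\rangle=\langle\varphi_k,\varphi_l\rangle$ to collapse everything to $\sum_k\|A\varphi_k\|_2^2$. The identification with $\|A\|_F^2$ is then made exactly as you do, by taking the canonical basis.

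Your approach instead converts the problem into a trace computation from the outset: using cyclicity to rewrite $\sum_j\|A\varphi_j\|_2^2$ as $\tr\bigl(A^TA\sum_j\varphi_j\varphi_j^T\bigr)=\tr(A^TA)$, which is manifestly basis-free. This is arguably the more conceptual argument and makes natural use of the trace identities established in the paragraph just before the lemma. The paper's version, on the other hand, is written purely in terms of inner products and avoids invoking the completeness relation $\sum_j\varphi_j\varphi_j^T=I$, which makes it transplant verbatim to a separable infinite-dimensional Hilbert space (the setting in which this lemma is often stated). In the finite-dimensional context of the paper both arguments are equally rigorous.
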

\begin{proof} We decompose $\displaystyle \psi_j=\sum_{k=1}^N\langle \psi_j,\varphi_k\rangle \varphi_k$ and $\displaystyle A\psi_j=\sum_{k=1}^N\langle \psi_j,\varphi_k\rangle A\varphi_k$.
Hence
\begin{align*}
\sum_{j=1}^N\|A\psi_j\|_2^2&=\sum_{j=1}^N \sum_{k=1}^N\sum_{l=1}^N\langle\psi_j,\varphi_k\rangle\langle\psi_j,\varphi_l\rangle \langle A\varphi_k,A\varphi_l\rangle\\
&=\sum_{k=1}^N\sum_{l=1}^N \langle A\varphi_k,A\varphi_l\rangle\sum_{j=1}^N \langle\psi_j,\varphi_k\rangle\langle\psi_j,\varphi_l\rangle 
=\sum_{k=1}^N\sum_{l=1}^N \langle A\varphi_k,A\varphi_l\rangle \langle\varphi_k,\varphi_l\rangle\\
&=\sum_{k=1}^N\|A\varphi_k\|_2^2.
\end{align*}
Choosing the canonical basis $(e_j)_{j=1}^N$ gives the second identity.
\end{proof}
\begin{definition} Let $A\in\R^{n\times N}$. Then we define
\begin{equation}\label{eq:Schatten}
\|A\|_{S_p}:=\begin{cases}\displaystyle \Bigl(\sum_{j=1}^n \sigma_j(A)^p\Bigr)^{1/p}&\quad \text{for}\ 0<p<\infty,\\
\displaystyle \max_{j=1,\dots,n}\sigma_j(A)&\quad \text{for}\ p=\infty.\end{cases}
\end{equation}
\end{definition}
If $p=\infty$, then
$$
\|A\|_{S_\infty}=\sigma_1(A)=\sup_{v\in\R^N,\|v\|_2=1}\|Av\|_2=\sup_{v\in\R^N:\|v\|_2=1}\sup_{u\in\R^n:\|u\|_2=1}\langle u,Av\rangle
$$
is the operator norm and will be denoted by just $\|A\|.$
Indeed, by \eqref{eq:SVDr} we get $\|A\|_{S_\infty}=\langle u_1,Av_1\rangle$ and for any $u\in\R^n$ and $v\in\R^N$ with unit norms we get
by H\"older's inequality
$$
\langle u,Av\rangle = \sum_{j=1}^r \sigma_j(A) \langle v_j,v\rangle \langle u_j,u\rangle\le\sigma_1(A)\Bigl(\sum_{j=1}^r|\langle v_j,v\rangle|^2\Bigr)^{1/2}
\Bigl(\sum_{j=1}^r|\langle u_j,u\rangle|^2\Bigr)^{1/2}\le\sigma_1(A).
$$
By Lemma \ref{lem:Pietsch} we also get $\|A\|_{S_2}=\|A\|_F.$ Indeed, it is enough to take any orthonormal basis of $\R^N$, which includes also the vectors $v_1,\dots,v_r.$

The analogue of the $\ell_1$-norm for matrices is the Schatten-1 norm, also known as \emph{nuclear norm} $\|A\|_*:=\|A\|_{S_1}=\sum_j\sigma_j(A).$
The easiest way to show that this expression is indeed a norm is most likely by showing that the nuclear norm is dual to the operator norm with respect to
the Frobenius scalar product. The reader may want to compare this proof with the proof of the triangle inequality for the $\ell_1$-norm.

\begin{lemma} Let $A\in\R^{n\times N}$. Then
\begin{equation}
\|A\|_*=\sup_{B\in\R^{n\times N}, \|B\|\le 1}\langle A,B\rangle_F.
\end{equation}
\end{lemma}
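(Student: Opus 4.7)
The plan is to exploit the compact SVD $A = U\Sigma V^T$ with $U \in \R^{n\times r}$, $V\in \R^{N\times r}$ having orthonormal columns and $\Sigma = \diag(\sigma_1(A),\dots,\sigma_r(A))$, and then prove the two inequalities $\sup \ge \|A\|_*$ and $\sup \le \|A\|_*$ separately.

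For the lower bound $\sup \ge \|A\|_*$, I would exhibit an explicit competitor $B_0 := UV^T$. First, I would check that $\|B_0\|\le 1$: for any unit vector $v\in\R^N$, we have $\|B_0 v\|_2 = \|UV^T v\|_2 = \|V^T v\|_2\le \|v\|_2 = 1$ because $V^T v$ has norm at most one (since $V$ has orthonormal columns) and $U$ is an isometry on its column space. Then I would compute, using $\tr(A^T B_0) = \langle A,B_0\rangle_F$ and the cyclicity of trace established above the lemma:
\begin{align*}
\langle A, B_0\rangle_F = \tr\bigl((U\Sigma V^T)^T UV^T\bigr) = \tr(V\Sigma U^T UV^T) = \tr(\Sigma V^T V) = \tr(\Sigma) = \sum_{j=1}^r \sigma_j(A) = \|A\|_*.
\end{align*}

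For the upper bound, take any $B\in \R^{n\times N}$ with $\|B\|\le 1$. Using $A = U\Sigma V^T$ and cyclicity of the trace, I would write
\begin{align*}
\langle A, B\rangle_F = \tr(A^T B) = \tr(V\Sigma U^T B) = \tr(\Sigma U^T B V) = \sum_{j=1}^r \sigma_j(A)\, \langle u_j, B v_j\rangle,
\end{align*}
and then bound each inner product by Cauchy--Schwarz and the definition of the operator norm: $|\langle u_j, B v_j\rangle|\le \|u_j\|_2\cdot\|B v_j\|_2\le 1\cdot \|B\|\cdot\|v_j\|_2\le 1$. Since $\sigma_j(A)\ge 0$, this gives
\begin{align*}
\langle A, B\rangle_F \le \sum_{j=1}^r \sigma_j(A) = \|A\|_*,
\end{align*}
and taking the supremum over admissible $B$ yields $\sup_{\|B\|\le 1}\langle A,B\rangle_F \le \|A\|_*$, completing the proof.

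The calculations are routine; the only place requiring some care is the verification that the candidate $B_0 = UV^T$ really has operator norm at most one, which is immediate from the orthonormality of the columns of $U$ and $V$. Once this is in place, the matching lower and upper bounds together with the explicit attainment at $B_0$ also show that the supremum is actually a maximum.
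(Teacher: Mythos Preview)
Your proof is correct and follows essentially the same route as the paper: exhibit $B_0 = UV^T$ from the SVD to attain $\|A\|_*$, and for the upper bound use cyclicity of the trace to write $\langle A,B\rangle_F = \sum_j \sigma_j(A)\,u_j^T B v_j$ and bound each factor by the operator norm of $B$. The only cosmetic differences are that you work with the compact SVD and spell out the verification $\|B_0\|\le 1$, whereas the paper uses the full SVD and simply asserts $\|B\|=1$.
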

\begin{proof} ``$\le$'': Let $A=U\Sigma V^T$ and let $B:=U I_n V^T$, where $U,\Sigma,I_n\in\R^{n\times n}$ and $V\in\R^{N\times n}$. Then $\|B\|=1$ and
\begin{align*}
\langle A,B\rangle_F&=\tr(A^TB)=\tr((U\Sigma V^T)^T(UI_nV^T))=\tr(V\Sigma U^TUI_nV^T)=\tr(V\Sigma V^T)\\
&=\tr(V^TV\Sigma)=\tr(I_n\Sigma)=\tr(\Sigma)=\|A\|_*.
\end{align*}
``$\ge$'': If, on the other hand, $A=U\Sigma V^T$ and $\|B\|\le 1$, then we obtain
\begin{align*}
\langle A,B\rangle_F&=\tr(A^TB)=\tr[(U\Sigma V^T)^TB]=\tr(V\Sigma U^TB)=\tr(\Sigma U^TBV)\\
&=\langle\Sigma,U^TBV \rangle_F=\sum_{j=1}^n\sigma_j(A)(U^TBV)_{j,j}=\sum_{j=1}^n\sigma_j(A)(u_j^TBv_j)\\
&\le \sum_{j=1}^n\sigma_j(A)\sigma_1(B)\le \|A\|_*.
\end{align*}
\end{proof}
The subadditivity of the nuclear norm follows easily from this lemma:
\begin{align*}
\|A+B\|_*&=\sup_{C\in\R^{n\times N},\|C\|\le 1}\langle A+B,C\rangle_F=\sup_{C\in\R^{n\times N},\|C\|\le 1}\Bigl(\langle A,C\rangle_F+\langle B,C\rangle_F\Bigr)\\
&\le\sup_{C\in\R^{n\times N},\|C\|\le 1}\langle A,C\rangle_F+\sup_{C\in\R^{n\times N},\|C\|\le 1}\langle B,C\rangle_F=\|A\|_*+\|B\|_*.
\end{align*}

For a real squared symmetric matrix $A=A^T$, we denote by $\lambda_j(A)$ its (real) eigenvalues.
Recall, that their sum is equal to its trace - the sum of the elements on the diagonal.
The following lemma is a certain analogue of a triangle inequality for eigenvalues of symmetric matrices
and singular values of rectangular matrices.

\begin{lemma}\label{lem:Lid} (i) Let $A,B\in\R^{d\times d}$ be two symmetric matrices (i.e. $A=A^T, B=B^T$).
Then
$$
\sum_{j=1}^d |\lambda_j(A)-\lambda_j(B)|\le\sum_{j=1}^d|\lambda_j(A-B)|=\|A-B\|_*.
$$
(ii) Let $A,B\in\R^{n\times N}$. Then
$$
\sum_{j=1}^n|\sigma_j(A)-\sigma_j(B)|\le  \sum_{j=1}^n\sigma_j(A-B).
$$
\end{lemma}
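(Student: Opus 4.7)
The plan is to derive part (ii) from part (i) via a Hermitian (Jordan) dilation, and to prove part (i) by establishing a Lidskii-type majorization and then specializing to the convex function $t\mapsto|t|$.

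For part (i), set $\mu_j := \lambda_j(A) - \lambda_j(B)$ and $\nu_j := \lambda_j(A-B)$, with eigenvalues sorted decreasingly in each case. First I would record the trace identity
$$\sum_{j=1}^d \mu_j = \tr(A)-\tr(B) = \tr(A-B) = \sum_{j=1}^d\nu_j,$$
so the total sums automatically agree. The main technical step is Lidskii's subset inequality: for every $J\subset\{1,\dots,d\}$ of size $k$,
$$\sum_{j\in J}\mu_j \le \sum_{j=1}^k \nu_j.$$
Choosing $J$ to be the indices carrying the $k$ largest values of $\mu$ gives the weak majorization $\sum_{j=1}^k\mu^\downarrow_j\le\sum_{j=1}^k\nu_j$, which combined with the equal-sum property upgrades to full majorization $\mu\prec\nu$. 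Since $t\mapsto|t|$ is convex, majorization yields $\sum_j|\mu_j|\le\sum_j|\nu_j|$. Finally, because $A-B$ is symmetric its singular values are $|\nu_j|$, hence $\sum_j|\nu_j|=\|A-B\|_*$, closing (i).

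For part (ii), I would introduce the Hermitian dilation $\tilde A := \begin{pmatrix}0 & A\\ A^T & 0\end{pmatrix}\in\R^{(n+N)\times(n+N)}$, which is symmetric with eigenvalues $\pm\sigma_j(A)$ together with $N-n$ additional zeros, and analogously for $\tilde B$. Since $\tilde A - \tilde B = \widetilde{A-B}$, its nuclear norm is $\|\tilde A - \tilde B\|_* = 2\sum_j\sigma_j(A-B)$. Sorting the eigenvalues of $\tilde A$ and $\tilde B$ decreasingly and matching position-by-position (the top $n$ slots contribute $\sigma_j(A)-\sigma_j(B)$, the middle $N-n$ zeros contribute nothing, and the bottom $n$ slots contribute $\sigma_j(B)-\sigma_j(A)$) shows $\sum_j|\lambda_j(\tilde A)-\lambda_j(\tilde B)| = 2\sum_j|\sigma_j(A)-\sigma_j(B)|$. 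Applying (i) to $\tilde A,\tilde B$ and dividing by $2$ then yields (ii).

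The main obstacle is the Lidskii subset-sum inequality in step one of (i). The Ky Fan maximum principle $\sum_{j=1}^k\lambda_j(M) = \max\{\tr(P^TMP):P\in\R^{d\times k},\,P^TP=I_k\}$ readily supplies the top-$k$ version $\sum_{j=1}^k\lambda_j(A)\le\sum_{j=1}^k\lambda_j(B)+\sum_{j=1}^k\lambda_j(A-B)$, but it is insufficient because the sorted differences $\mu^\downarrow_j$ need not coincide with the differences of the sorted eigenvalues in positions $1,\dots,k$. Controlling arbitrary subsets $J$ requires either a Wielandt-type min-max principle or a Birkhoff/doubly-stochastic reduction; this is the delicate ingredient, and everything else in the argument is essentially bookkeeping around it.
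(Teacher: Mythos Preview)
Your reduction of (ii) to (i) via the Hermitian dilation is exactly what the paper does.

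For (i) your plan is correct in outline but takes a considerably heavier route than the paper. You aim for the full Lidskii majorization $\mu\prec\nu$ and then apply convexity of $t\mapsto|t|$; as you yourself flag, the arbitrary-subset form of Lidskii is the delicate step and needs a Wielandt min-max or a Birkhoff argument that you do not actually supply. The paper bypasses this obstacle entirely with an elementary trick that uses only Weyl monotonicity (which you already have from Courant--Fischer): take the Jordan decomposition $A-B=(A-B)^+-(A-B)^-$, set $C:=A+(A-B)^-=B+(A-B)^+$, so that $C\geM A$ and $C\geM B$; Weyl then gives $\lambda_j(C)\ge\lambda_j(A)$ and $\lambda_j(C)\ge\lambda_j(B)$, whence $|\lambda_j(A)-\lambda_j(B)|\le 2\lambda_j(C)-\lambda_j(A)-\lambda_j(B)$, and summing over $j$ collapses via traces to $2\,\tr(C)-\tr(A)-\tr(B)=\tr(A-B)^++\tr(A-B)^-=\|A-B\|_*$. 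Your approach would buy the stronger conclusion $\mu\prec\nu$ (and hence the inequality for every convex $\phi$, not just $|\cdot|$), but at the price of importing a genuinely nontrivial intermediate theorem; the paper's argument is self-contained and needs nothing beyond the monotonicity principle you already cite.
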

\begin{proof} (i) We use the (Jordan) decomposition of $A-B$ 
into its positive and negative part
$$
A-B=(A-B)^+-(A-B)^-
$$
and obtain
$$
\|A-B\|_*=\tr(A-B)^++\tr(A-B)^-.
$$
We put 
$$
C:=A+(A-B)^-=B+(A-B)^+.
$$
Then $C\geM A$ and $C\geM B$. By Weyl's monotonicity principle\footnote{This can be proved from the minimax characterization
of eigenvalues
$$
\lambda_k(A)=\max_{\substack{M\subset \R^d\\\dim(M)=k}} \min_{\substack{x\in M\\\|x\|_2=1}}\langle x,Ax\rangle
\le \max_{\substack{M\subset \R^d\\\dim(M)=k}} \min_{\substack{x\in M\\\|x\|_2=1}}\langle x,Cx\rangle=\lambda_k(C),
$$ where we have used that
$$
\langle x,Ax\rangle=\langle x,Cx\rangle + \langle x,(C-A)x\rangle\le \langle x,Cx\rangle
$$ if $C-A$ is positive semi-definite.}

$\lambda_j(C)\ge \lambda_j(A)$ and
$\lambda_j(C)\ge\lambda_j(B).$ It follows that
\begin{align*}
\lambda_j(A)-\lambda_j(B)&\le \lambda_j(2C)-\lambda_j(A)-\lambda_j(B) \quad \text{and}\\
\lambda_j(B)-\lambda_j(A)&\le \lambda_j(2C)-\lambda_j(A)-\lambda_j(B), \quad \text{hence}\\
|\lambda_j(A)-\lambda_j(B)|&\le \lambda_j(2C)-\lambda_j(A)-\lambda_j(B).
\end{align*}
Summing up, we get
\begin{align*}
\sum_{j=1}^d |\lambda_j(A)-\lambda_j(B)|&\le \tr(2C)-\tr(A)-\tr(B)\\
&=\tr(A+(A-B)^-)+\tr(B+(A-B)^+)-\tr(A)-\tr(B)\\
&=\|A-B\|_*.
\end{align*}
(ii) Put
$$
\tilde A=\left(\begin{matrix}0&A\\A^T&0\end{matrix}\right)\quad\text{and}\quad \tilde B=\left(\begin{matrix}0&B\\B^T&0\end{matrix}\right).
$$
Then $\tilde A$ and $\tilde B$ are $d\times d$ symmetric matrices with $d=n+N.$ Furthermore, the eigenvalues of $\tilde A$
are\footnote{\dots the eigenvectors being $(u_j^T,v_j^T)^T$ and $(u_j^T,-v_j^T)^T$} $(\pm\sigma_1(A),\dots,\pm\sigma_n(A))$
and similarly for $B$ and $A-B$. Applying (i) gives
\begin{align*}
\sum_{j=1}^{n+N}|\lambda_j(\tilde A)-\lambda_j(\tilde B)|&=\sum_{j=1}^n|\sigma_j(A)-\sigma_j(B)|+\sum_{j=1}^n|-\sigma_j(A)+\sigma_j(B)|\\
&=2\sum_{j=1}^n|\sigma_j(A)-\sigma_j(B)|\le \sum_{j=1}^{n+N}|\lambda_j(\tilde A-\tilde B)|\\
&=\sum_{j=1}^n|\sigma_j(A-B)|+\sum_{j=1}^n|-\sigma_j(A-B)|=2\sum_{j=1}^n\sigma_j(A-B).
\end{align*}
\end{proof}

\subsubsection{Setting of low-rank matrix recovery}

It is very well known (and it is the underlying fact explaining the success of data analysis methods like Principal Component Analysis)
that many matrices appearing in applications are of a low-rank, or at least approximatively low-rank. By that we mean that
their distance (most often measured in the Frobenius norm) to some low-rank matrix is small.
It is therefore desirable to identify low-rank matrices from only a limited amount of information given.
Let us formalize the setting.

Let $A\in \R^{n\times N}$ be a matrix of rank $r\ll \min(n,N).$ The information, which we allow, is only linear.
This means, that we are given an output of a linear \emph{information map} ${\mathcal X}:\R^{n\times N}\to \R^m$,
i.e. the vector $(\X(A)_1,\dots,\X(A)_m)^T$. Finally, we would like to recover (``decode'') $A$ (or at least its good approximation)
from $\X(A)$. Altogether, we would like to have good information maps $\X$ and good decoders $\Delta$ such that
$\Delta(\X(A))$ is close to $A$ for all matrices of a low (prescribed) rank $r.$

The performance of a given coder-decoder pair $(\Delta,\X)$ can be measured by the error between $A$ and $\Delta(\X(A))$, i.e. by
$$
E^r(\Delta,\X)=\sup_{A:\rank(A)\le r, \|A\|_F\le 1}\|A-\Delta(\X(A))\|_F.
$$
The search for the best coder-decoder pair can then be expressed by taking the infimum over all possible $(\Delta,\X)$,
$$
E^r=\inf_{(\Delta,\X)}E^r(\Delta,\X)=\inf_{(\Delta,\X)}\sup_{A:\rank(A)\le r, \|A\|_F\le 1}\|A-\Delta(\X(A))\|_F.
$$
Although there are different versions of these quantities, which incorporate also stability and robustness, we will concentrate only
on the model case when $A$ is indeed exactly low-rank and when the measurements $\X(A)$ are noiseless.

Motivated by the methods of compressed sensing, we will consider only the recovery (=decoder) map
given by nucelar norm minimization, i.e.
\begin{equation}\label{eq:Pstar}
\tag{$P_*$}\argmin_{Z\in\R^{n\times N}}\|Z\|_*\quad \text{s.t.}\quad \X(Z)=\X(A).
\end{equation}
We will therefore concentrate on the construction of a good information map $\X$.

\subsection{Rank-$r$ Null Space Property}
\begin{definition} Let $\X:\R^{n\times N}\to\R^m$ be a linear information map, which associates to every $A\in\R^{n\times N}$
a vector $(\X(A)_1,\dots,\X(A)_m)^T\in\R^m$. Let $1\le r\le n$. We say that $\X$ satisfies the \emph{rank r-NSP} if
\begin{equation}\label{eq:rankNSP}
\sum_{j=1}^r\sigma_j(M)<\sum_{j=r+1}^n\sigma_j(M)\quad\text{for all}\quad M\in\kernel\, \X\setminus\{0\}.
\end{equation}
\end{definition}
\begin{theo} Every matrix $A$ with $\rank(A)\le r$ is a unique solution of $(P_*)$ if, and only if, $\X$ has rank-$r$ NSP.
\end{theo}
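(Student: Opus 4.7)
The plan is to mirror the vector NSP argument from Theorem \ref{thm:NSP} as closely as possible, with Lemma \ref{lem:Lid}(ii) playing the role of the coordinate-wise triangle inequality and with the vanishing of the trailing singular values of a low-rank matrix playing the role of sparsity.

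For the sufficiency direction ($\Leftarrow$), assume the rank-$r$ NSP, let $\rank(A)\le r$, and let $Z\ne A$ with $\X(Z)=\X(A)$. Set $M:=Z-A$, which lies in $\kernel\,\X\setminus\{0\}$, and write $a_j:=\sigma_j(A)$, $m_j:=\sigma_j(M)$, so that $a_j=0$ for $j>r$. I would apply Lemma \ref{lem:Lid}(ii) to $A$ and $-M$ (whose difference is $A-(-M)=Z$) to obtain
\[
\|Z\|_*\;=\;\sum_{j=1}^n\sigma_j(Z)\;\ge\;\sum_{j=1}^n|a_j-m_j|.
\]
Splitting the sum at $r$, the tail collapses to $\sum_{j=r+1}^n m_j$ by $a_j=0$, while on the head the elementary bound $|a_j-m_j|\ge a_j-m_j$ gives $\sum_{j=1}^r|a_j-m_j|\ge\|A\|_*-\sum_{j=1}^r m_j$. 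Combining these yields
\[
\|Z\|_*\;\ge\;\|A\|_*\;+\;\Bigl(\sum_{j=r+1}^n m_j-\sum_{j=1}^r m_j\Bigr),
\]
and the bracket is strictly positive by the rank-$r$ NSP applied to the nonzero $M$, so $\|Z\|_*>\|A\|_*$.

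For the necessity direction ($\Rightarrow$) I would argue by contrapositive, building an explicit non-uniqueness example from any $M\in\kernel\,\X\setminus\{0\}$ that violates NSP. Taking the SVD $M=\sum_j m_j u_j v_j^T$, split $M=M_1+M_2$ with $M_1:=\sum_{j=1}^r m_ju_jv_j^T$ the rank-$r$ truncation. Set $A:=-M_1$ and $Z:=M_2$: then $\rank(A)\le r$, $Z-A=M$ so $\X(Z)=\X(A)$, and $Z\ne A$ since $M\ne0$. Because the column and row subspaces of $M_1$ and $M_2$ are mutually orthogonal, $\|A\|_*=\sum_{j=1}^r m_j$ and $\|Z\|_*=\sum_{j=r+1}^n m_j$, and the assumed NSP violation reads exactly $\|Z\|_*\le\|A\|_*$. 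Thus $A$ fails to be the unique minimizer of $(P_*)$.

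The step I expect to be least obvious is picking the right instance of Lemma \ref{lem:Lid}(ii) in the sufficiency argument: the naive choice $\sum|\sigma_j(Z)-\sigma_j(A)|\le\|M\|_*$ only reproduces the ordinary triangle inequality and cannot be combined with NSP. The productive move is instead to compare $A$ with $M$, so that $\|Z\|_*$ appears on the upper side of Lidskii's bound; this is what converts the sparsity information $a_j=0$ for $j>r$ directly into the tail sum $\sum_{j>r}m_j$ that the NSP controls.
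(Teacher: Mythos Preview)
Your proposal is correct and follows essentially the same approach as the paper. The sufficiency argument is identical up to the harmless sign choice $M=Z-A$ versus the paper's $M=A-Z$, and the necessity argument uses the very same SVD splitting $M=M_1+M_2$; the paper phrases it directly (take $A=M_1$, observe $-M_2$ is feasible, conclude $\|M_1\|_*<\|M_2\|_*$) whereas you phrase the same construction contrapositively.
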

\begin{proof}$\bullet\Rightarrow$ Assume first that every matrix $A$ with $\rank(A)\le r$ is the unique solution of $(P_*)$, i.e. of
\begin{equation}\label{eq:NSPPstar}
\argmin_{Z\in\R^{n\times N}}\|Z\|_*\quad \text{s.t.}\quad \X(Z)=\X(A).
\end{equation}
Take any $M\in\kernel \X\setminus\{0\}$ and consider its singular value decomposition $M=U\Sigma V^T$ with
$\sigma_1(M),\dots,\sigma_n(M)$ on the diagonal of $\Sigma.$ Put $M_1=U\Sigma_1V^T$ and $M_2=U\Sigma_2V^T$, where
\begin{align*}
\Sigma_1&={\rm diag}(\sigma_1(M),\dots,\sigma_r(M),0,\dots,0)=\left(\begin{matrix}\sigma_1(M)&&&&&0\\&\sigma_2(M)\\&&\dots\\&&&&\sigma_r(M)\\&0&&&&0\\&&&&&&\dots\\&&&&&&&0\end{matrix}\right),\\
\Sigma_2&={\rm diag}(0,\dots,0,\sigma_{r+1}(M),\dots,\sigma_n(M))=\left(\begin{matrix}0&&&&&0\\&\dots\\&&&\sigma_{r+1}(M)\\&0&&&\sigma_{r+2}(M)\\&&&&&\dots\\&&&&&&\sigma_n(M)\end{matrix}\right)
\end{align*}
Then $M=M_1+M_2$ and $\X(-M_2)=\X(M_1-M)=\X(M_1)$. By assumption, $M_1$ is the unique solution of \eqref{eq:NSPPstar}, hence $\|M_1\|_*<\|M_2\|_*$ and
$\X$ has rank-$r$ NSP.\\
$\bullet\Leftarrow:$ Let
$$
\sum_{j=1}^r\sigma_j(M)<\sum_{j=r+1}^n\sigma_j(M)\quad\text{for all}\quad M\in\kernel\, \X\setminus\{0\}
$$
and let $A\in\R^{n\times N}$ have $\rank(A)\le r$. Let $Z\in\R^{n\times N}$ with $Z\not=A$ and $\X(Z)=\X(A).$ We want to show that $\|A\|_*<\|Z\|_*.$
Put $M:=A-Z$. Then $M\in\kernel\, \X\setminus\{0\}$. Then (using Lemma \ref{lem:Lid})
%\footnote{ToDo! - Lidski? Theorem 7.4.9.1 (or (3.5.30) in Horn-Johnson, $\|A-B\|\ge \|\Sigma(A)-\Sigma(B)\|$ for any unitarily invariant!}
\begin{align*}
\|Z\|_*&=\|M-A\|_*=\sum_{j=1}^n\sigma_j(M-A)\ge\sum_{j=1}^n|\sigma_j(M)-\sigma_j(A)|\\
&=\sum_{j=1}^r|\sigma_j(M)-\sigma_j(A)|+\sum_{j=r+1}^n\sigma_j(M)\ge \sum_{j=1}^r\sigma_j(A)-\sum_{j=1}^r\sigma_j(M)+\sum_{j=r+1}^n\sigma_j(M)\\
&>\sum_{j=1}^r\sigma_j(A)=\|A\|_*.
\end{align*}
\end{proof}

\subsection{Rank-$r$ Restricted Isometry Property}

As already in the area of compressed sensing, the NSP condition is rather difficult to check.
It is therefore convenient to have another condition, which would imply NSP.
It is not surprising that a certain modification of the RIP will do the job.

\begin{definition} Let $\X:\R^{n\times N}\to\R^m$ be a linear information map. We say that it has $\rank$-$r$ Restricted Isometry Property with the constant $\delta_r>0$ if
$$
(1-\delta_r)\|A\|_F^2\le\|{\mathcal X}(A)\|_2^2\le(1+\delta_r)\|A\|_F^2
$$
for all matrices $A\in\R^{n\times N}$ of rank at most $r$.
\end{definition}
As before, RIP again implies NSP - with nearly the same proof as in compressed sensing. Essentially,
one has to replace Euclidean norms by Frobenius norms and $\ell_1$-norms by nucelar norms.
\begin{theo} If $\delta_{2r}<1/3$, then ${\mathcal X}$ has $\rank$-$r$ NSP. Especially, every $A\in\R^{n\times N}$
with $\rank(A)\le r$ is a unique minimizer of
$$
\argmin_{Z\in\R^{n\times N}}\|Z\|_*\quad \text{s.t.}\quad {\mathcal X}(Z)={\mathcal X}(A).
$$
\end{theo}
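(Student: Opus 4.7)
The plan is to mirror the standard ``RIP implies NSP'' argument from compressed sensing, with disjoint coordinate supports of a sparse vector replaced by disjoint SVD blocks of a kernel matrix. Fix $M\in\kernel\,\X\setminus\{0\}$ with singular value decomposition $M=\sum_{i=1}^n\sigma_i(M)\,u_iv_i^T$. I partition the index set $\{1,\dots,n\}$ into consecutive blocks $I_0=\{1,\dots,r\}$, $I_1=\{r+1,\dots,2r\}$, $I_2=\{2r+1,\dots,3r\},\dots$ of size $r$ (the last possibly shorter), and set $M_j:=\sum_{i\in I_j}\sigma_i(M)\,u_iv_i^T$. Then each $M_j$ has rank $\le r$, with $\|M_j\|_F^2=\sum_{i\in I_j}\sigma_i(M)^2$ and $\|M_j\|_*=\sum_{i\in I_j}\sigma_i(M)$, and orthonormality of the $u_i$ and $v_i$ yields $M_jM_k^T=0$ and $M_j^TM_k=0$ for $j\ne k$; in particular $\langle M_j,M_k\rangle_F=0$ and $\rank(M_j\pm M_k)\le 2r$.

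The first technical ingredient I would establish is the matrix analogue of the classical bilinear RIP inequality: whenever $A,B\in\R^{n\times N}$ satisfy $AB^T=0=A^TB$ and $\rank(A)+\rank(B)\le s$, then
\[
|\langle\X(A),\X(B)\rangle|\le \delta_s\|A\|_F\|B\|_F.
\]
By homogeneity we may take $\|A\|_F=\|B\|_F=1$. Since $\langle A,B\rangle_F=\tr(A^TB)=0$, we have $\|A\pm B\|_F^2=2$ and $\rank(A\pm B)\le s$, and substituting the rank-$s$ RIP into the polarization identity $4\langle\X(A),\X(B)\rangle=\|\X(A+B)\|_2^2-\|\X(A-B)\|_2^2$ yields the claim.

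With this in hand the computation mimics the sparse-vector proof. From $\X(M)=0$ one has $\X(M_0)=-\sum_{j\ge1}\X(M_j)$, and combining the RIP estimate for $M_0$ with the bilinear bound for each pair $(M_0,M_j)$ gives
\[
(1-\delta_r)\|M_0\|_F^2\le \|\X(M_0)\|_2^2\le \delta_{2r}\|M_0\|_F\sum_{j\ge1}\|M_j\|_F.
\]
For the tail, since every singular value in block $I_{j+1}$ is at most every singular value in $I_j$, the largest singular value in $I_{j+1}$ is at most $\|M_j\|_*/r$, whence $\|M_{j+1}\|_F\le\sqrt r\cdot\|M_j\|_*/r=\|M_j\|_*/\sqrt r$; the same monotonicity also gives $\|M_1\|_F\le \|M_0\|_F$. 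Summing,
\[
\sum_{j\ge1}\|M_j\|_F\le \|M_0\|_F+\frac{1}{\sqrt r}\sum_{i=r+1}^n\sigma_i(M).
\]
Substituting back, using $\delta_r\le\delta_{2r}$ and the Cauchy--Schwarz inequality $\sum_{i=1}^r\sigma_i(M)\le \sqrt r\,\|M_0\|_F$, yields
\[
\sum_{i=1}^r\sigma_i(M)\le \frac{\delta_{2r}}{1-2\delta_{2r}}\sum_{i=r+1}^n\sigma_i(M),
\]
and the hypothesis $\delta_{2r}<1/3$ makes the constant strictly less than $1$, which is exactly the rank-$r$ NSP. The ``Especially'' assertion is then immediate from the preceding theorem.

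I expect the main obstacle to be the bilinear RIP bound: it relies on the stronger-than-vector-case observation that the right notion of ``block orthogonality'' between SVD blocks requires both $M_jM_k^T=0$ and $M_j^TM_k=0$, so that $M_j\pm M_k$ genuinely lives in a rank-$2r$ subspace with $\|M_j\pm M_k\|_F^2=\|M_j\|_F^2+\|M_k\|_F^2$. Once that is cleanly established, the rest is a disciplined transcription of the compressed-sensing computation, with the nuclear norm replacing $\ell_1$ and the Frobenius norm replacing $\ell_2$.
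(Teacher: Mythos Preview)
Your proposal is correct and follows essentially the paper's route: the same SVD block decomposition $M=\sum_j M_j$, the same bilinear RIP estimate via polarization, and the same monotonicity bound on the tail blocks; the only cosmetic difference is that the paper bounds $\sum_{j\ge 1}\|M_j\|_F\le \|M\|_*/\sqrt r$ in one shot, whereas you separate off the $j=1$ term first. Your stated obstacle is overstated: for the bilinear step the paper needs only $\langle M_0,M_j\rangle_F=0$ and $\rank(M_0\pm M_j)\le 2r$, and the latter holds automatically by rank subadditivity, so the stronger conditions $M_jM_k^T=0=M_j^TM_k$ are never actually required.
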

\begin{proof} \emph{Step 1:} Let $A,Z\in\R^{n\times N}$ with $\langle A,Z\rangle_F=0$ and $\rank(A)+\rank(Z)\le r$. Then $|\langle \X(A),\X(Z)\rangle|\le \delta_r\|A\|_F\cdot\|Z\|_F.$
Indeed, let first $\|A\|_F=\|Z\|_F=1$. Then
$$
2(1-\delta_r)\le \|\X(A\pm Z)\|_2^2\le 2(1+\delta_r)
$$
and 
\begin{align*}
\langle \X(A),\X(Z)\rangle&=\frac{1}{4}\Bigl(\|\X(A+Z)\|_2^2-\|\X(A-Z)\|_2^2\Bigr)\\
&\le \frac{1}{4}\Bigl(2(1+\delta_r)-2(1-\delta_r)\Bigr)=\delta_r.
\end{align*}
A similar calculation also show that $-\langle \X(A),\X(Z)\rangle\le \delta_r$, giving $|\langle \X(A),\X(Z)\rangle|\le \delta_r.$

The general case then follows by homogeneity - we consider $\tilde A=A/\|A\|_F$ and $\tilde Z=Z/\|Z\|_F$ and apply the result just obtained to $\tilde A$
and $\tilde Z$.

\emph{Step 2:} Let $\delta_{2r}<1/3.$ Let $M\in\kernel\, \X\setminus\{0\}$ and consider its singular value decomposition $M=U\Sigma V^T$, where
$\Sigma=\diag(\sigma_1(M),\sigma_2(M),\dots)$. We put
\begin{align*}
M_0&=U\diag(\sigma_1(M),\sigma_2(M),\dots,\sigma_r(M),0,\dots)V^T,\\
M_1&=U\diag(0,\dots,0,\sigma_{r+1}(M),\dots,\sigma_{2r}(M),0,\dots)V^T,\\
\vdots
\end{align*}
Observe that $\langle M_i,M_j\rangle_F=\langle U\diag(\dots)V^T,U\diag(\dots)V^T\rangle_F=\langle \diag(\dots),\diag(\dots)\rangle_F =0$ for $i\not=j.$
Then $0=\X(M)=\X(M_0+M_1+\dots)$ and
\begin{align*}
\|M_0\|_F^2&\le \frac{1}{1-\delta_r}\|\X(M_0)\|_F^2=\frac{1}{1-\delta_r}\langle \X(M_0),\X(-M_1)+\X(-M_2)+\dots\rangle\\
&\le \frac{1}{1-\delta_r}\sum_{j\ge 1}|\langle \X(M_0),\X(M_j)\rangle|\le \frac{\delta_{2r}}{1-\delta_r}\sum_{j\ge 1}\|M_0\|_F\cdot\|M_j\|_F.
\end{align*}
As $M_0\not=0$, we conclude that
$$
\|M_0\|_F\le \frac{\delta_{2r}}{1-\delta_r}\sum_{j\ge 1}\|M_j\|_F.
$$
We denote $S_1=\{1,2,\dots,r\}$, $S_2=\{r+1,\dots,2r\}$, etc.
The proof is then finished by
\begin{align*}
\sum_{j\ge 1}\|M_j\|_F&=\sum_{j\ge 1}\Bigl(\sum_{l\in S_j}\sigma_l(M)^2\Bigr)^{1/2}\le \sum_{j\ge 1}\Bigl(r\max_{l\in S_j}\sigma_l(M)^2\Bigr)^{1/2}
=\sum_{j\ge 1}\sqrt{r}\max_{l\in S_j}\sigma_l(M)\\
&\le \sum_{j\ge 1}\sqrt{r}\min_{l\in S_{j-1}}\sigma_l(M)\le \sum_{j\ge 1}\sqrt{r}\cdot\frac{\sum_{l\in S_{j-1}}\sigma_l(M)}{r}=\frac{\|M\|_*}{\sqrt{r}}
\end{align*}
and
\begin{align*}
\|M_0\|_*&\le \sqrt{r}\|M_0\|_F\le \sqrt{r}\frac{\delta_{2r}}{1-\delta_r}\frac{\|M\|_*}{\sqrt{r}}=\frac{\delta_{2r}}{1-\delta_r}\|M\|_*<\frac{1}{2}\|M\|_*\\
&=\frac{1}{2}\Bigl(\|M_0\|_*+\|M_1+M_2+\dots\|_*\Bigr),
\end{align*}
hence $\|M_0\|_*< \|M_1+M_2+\dots\|_*$ and $\X$ has rank-$r$ NSP.
\end{proof}

\subsection{Information maps with $\rank$-$r$ RIP}

In this part we describe how to construct information maps with small $m$ and $\rank$-$r$ RIP smaller than, say, 1/3.
It comes as no surprise that the most simple case are information maps generated by i.i.d. Gaussian variables.
The proof follows the same pattern as in compressed sensing - we show a concentration bound for one fixed matrix $A$
and apply $\varepsilon$-net argument to cover the whole set of $\rank$-$r$ matrices.

The first lemma is the most simple $\varepsilon$-net construction in $\R^n.$

\begin{lemma}\label{lem:covS} Let $n\in\N$ and let $\varepsilon>0$. Then there is a subset ${\mathcal N}\subset {\mathbb S}^{n-1}$ with
$|{\mathcal N}|\le (1+2/\varepsilon)^n$ such that for every $x\in{\mathbb S}^{n-1}$ there is a $z\in{\mathcal N}$ with $\|x-z\|_2\le\varepsilon$.
\end{lemma}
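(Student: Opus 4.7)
The plan is to use the standard maximal packing (greedy) argument. I would construct $\mathcal{N}$ as a maximal $\varepsilon$-separated subset of $\mathbb{S}^{n-1}$, that is, a set such that $\|z-z'\|_2 > \varepsilon$ for all distinct $z,z'\in\mathcal{N}$, and such that no point of $\mathbb{S}^{n-1}$ can be added without violating this property. Existence of such a maximal set follows by Zorn's lemma (or by a direct greedy construction, noting that after finitely many steps the volume bound below forces termination).

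The covering property is then immediate from maximality: if some $x\in\mathbb{S}^{n-1}$ had $\|x-z\|_2>\varepsilon$ for every $z\in\mathcal{N}$, then $\mathcal{N}\cup\{x\}$ would still be $\varepsilon$-separated, contradicting maximality. Hence every $x\in\mathbb{S}^{n-1}$ admits some $z\in\mathcal{N}$ with $\|x-z\|_2\le\varepsilon$.

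To bound $|\mathcal{N}|$ I would use a volume-comparison argument in $\R^n$. Since the points of $\mathcal{N}$ are pairwise at distance strictly greater than $\varepsilon$, the open Euclidean balls $B(z,\varepsilon/2)$ with $z\in\mathcal{N}$ are pairwise disjoint. On the other hand, each such ball is contained in the centered ball $B(0,1+\varepsilon/2)$, since $\|z\|_2=1$. Writing $\vol$ for Lebesgue measure in $\R^n$ and using that $\vol(B(0,r))=r^n\vol(B(0,1))$, disjointness yields
$$
|\mathcal{N}|\cdot (\varepsilon/2)^n\,\vol(B(0,1))=\sum_{z\in\mathcal{N}}\vol(B(z,\varepsilon/2))\le \vol(B(0,1+\varepsilon/2))=(1+\varepsilon/2)^n\,\vol(B(0,1)).
$$
Dividing by $(\varepsilon/2)^n\,\vol(B(0,1))$ gives $|\mathcal{N}|\le (1+2/\varepsilon)^n$, which is the claimed bound.

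There is no real obstacle here; the only minor point to be careful about is the strict/non-strict inequality in the separation condition so that the balls of radius $\varepsilon/2$ are genuinely disjoint (open balls suffice), and the fact that the enlarged ball $B(0,1+\varepsilon/2)$ really does contain $B(z,\varepsilon/2)$ for all $z$ on the unit sphere, which is just the triangle inequality. The bound $(1+2/\varepsilon)^n$ drops out immediately from the ratio of radii raised to the $n$-th power.
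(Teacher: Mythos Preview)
Your proof is correct and follows essentially the same argument as the paper: take a maximal $\varepsilon$-separated subset of $\mathbb{S}^{n-1}$, obtain the covering property from maximality, and bound the cardinality by comparing the volumes of the disjoint balls $B(z,\varepsilon/2)$ with that of $B(0,1+\varepsilon/2)$. The paper's version is slightly terser (it does not spell out why maximality gives covering), but the substance is identical.
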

\begin{proof}Indeed, let ${\mathcal N}=\{z_1,\dots,z_N\}\subset{\mathbb S}^{n-1}$ be (any) maximal subset of ${\mathbb S}^{n-1}$ with $\|z_j-z_k\|_2\ge \varepsilon$
for $j\not=k$. Then the (open) balls $z_j+\varepsilon/2\cdot B_2^n$ are disjoint and all included in $(1+\varepsilon/2)B_2^n$. Comparing the volumes, we get
$$
N\vol(\varepsilon/2\cdot B_2^n)\le \vol((1+\varepsilon/2)B_2^n)
$$
or, equivalently,
$$
N(\varepsilon/2)^n\vol(B_2^n)\le (1+\varepsilon/2)^n\vol(B_2^n),
$$
which gives the result.
\end{proof}
\begin{rem} With virtually no modifications the same result is true also for the unit ball $B_2^n.$
\end{rem}
Although quite natural, we give an explicit definition of an $\varepsilon$-net.
\begin{definition} We say that ${\mathcal N}\subset X$ is an $\varepsilon$-net of the (quasi-)metric space $(X,\varrho)$
if for every $x\in X$ there is $z\in{\mathcal N}$ with $\|x-z\|<\varepsilon$.
\end{definition}
\begin{lemma}\label{lem:norm_net} Let ${\mathcal N}\subset {\mathbb S}^{n-1}$ be an $\varepsilon$-net of ${\mathbb S}^{n-1}$ for some $0<\varepsilon<1.$
Then
$$
\|A\|=\max_{x\in{\mathbb S}^{n-1}}\|Ax\|_2\le \frac{1}{1-\varepsilon}\max_{z\in{\mathcal N}}\|Az\|_2
$$
for every matrix $A$ with $n$ columns.
\end{lemma}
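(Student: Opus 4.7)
The plan is to compare the operator norm (taken over the whole sphere) with its restriction to the net, using that any point on the sphere is within $\varepsilon$ of some net point, and that the "leftover" vector $x-z$ has norm at most $\varepsilon$ so its image under $A$ contributes at most $\varepsilon \|A\|$.

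First I would pick an $x \in \mathbb{S}^{n-1}$ which attains the maximum $\|A\| = \|Ax\|_2$; this is legitimate because $\mathbb{S}^{n-1}$ is compact and $x \mapsto \|Ax\|_2$ is continuous. Then, using that $\mathcal{N}$ is an $\varepsilon$-net, I choose $z \in \mathcal{N}$ with $\|x - z\|_2 < \varepsilon$. By the triangle inequality,
\[
\|A\| = \|Ax\|_2 \le \|Az\|_2 + \|A(x - z)\|_2 \le \max_{z \in \mathcal{N}} \|Az\|_2 + \|A\|\cdot \|x - z\|_2 \le \max_{z \in \mathcal{N}} \|Az\|_2 + \varepsilon \|A\|.
\]
Rearranging, $(1 - \varepsilon)\|A\| \le \max_{z \in \mathcal{N}} \|Az\|_2$, and since $\varepsilon < 1$ we may divide to obtain the claimed bound.

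There is essentially no obstacle here; the only subtlety worth flagging is the use of compactness to guarantee that the supremum defining $\|A\|$ is attained (alternatively, one can take a maximizing sequence $x_k$ and pass to the limit, or simply work with a supremum and absorb an extra $\eta > 0$ that is then sent to zero). The reverse inequality $\max_{z \in \mathcal{N}} \|Az\|_2 \le \|A\|$ is trivial since $\mathcal{N} \subset \mathbb{S}^{n-1}$, so the lemma indeed pins down $\|A\|$ up to the factor $1/(1-\varepsilon)$ from knowledge of $A$ on the net alone—exactly what is needed for the subsequent $\varepsilon$-net argument establishing rank-$r$ RIP for Gaussian information maps.
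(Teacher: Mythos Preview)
Your proof is correct and follows essentially the same approach as the paper: approximate a point on the sphere by a net point, apply the triangle inequality to split $\|Ax\|_2$ into $\|Az\|_2 + \|A(x-z)\|_2$, bound the second term by $\varepsilon\|A\|$, and rearrange. The only cosmetic difference is that you pick a maximizer $x$ via compactness, whereas the paper works with an arbitrary $x\in\mathbb{S}^{n-1}$ and then takes the supremum; both variants are equally valid.
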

\begin{proof} Let $x\in{\mathbb S}^{n-1}$. Then there is $z\in{\mathcal N}$ with $\|x-z\|_2\le\varepsilon$
and 
$$
\|Ax\|_2\le \|Az\|_2+\|A(x-z)\|_2\le \max_{z\in{\mathcal N}}\|Az\|_2+\|A\|\cdot \varepsilon.
$$
Taking the supremum over $x\in{\mathbb S}^{n-1}$ finishes the proof.
\end{proof}

We denote by $V_{n,k}$ the \emph{Stiefel manifold} of $k\times n$ orthonormal matrices.
\begin{equation}\label{eq:Stiefel}
V_{n,k}=\{U\in\R^{k\times n}:U U^T=I_k\}=\{U\in\R^{k\times n}: U\ \text{has orthonormal rows}\ u_1,\dots,u_k\}.
\end{equation}

\begin{lemma}\label{lem:covV} To every $\varepsilon>0$, there is a set ${\mathfrak N}\subset V_{n,k}$ with $|{\mathfrak N}|\le (1+2/\varepsilon)^{nk}$, such that to every
$V=(v_1,\dots,v_k)^T\in V_{n,k}$ with rows $v_1^T,v_2^T,\dots,v_k^T$ there is $U=(u_1,\dots,u_k)^T\in{\mathfrak N}$ with $\|V-U\|_{2,\infty}:=\max_{j=1,\dots,k}\|v_j-u_j\|_2\le 2\varepsilon$.
\end{lemma}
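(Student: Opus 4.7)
The plan is to mirror the volumetric packing argument of Lemma \ref{lem:covS}, but applied in the product space $(\mathbb{R}^n)^k \cong \mathbb{R}^{nk}$ equipped with the row-wise metric $\|\cdot\|_{2,\infty}$, and with the packing carried out directly inside $V_{n,k}$ (so that we are guaranteed to land in the Stiefel manifold and not merely in $(\mathbb{S}^{n-1})^k$).

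First, I would let $\mathfrak{N} \subset V_{n,k}$ be a maximal subset with the property that $\|U^1 - U^2\|_{2,\infty} \ge 2\varepsilon$ for every pair of distinct $U^1, U^2 \in \mathfrak{N}$. Maximality immediately yields the approximation statement: for any $V \in V_{n,k}$, the set $\mathfrak{N} \cup \{V\}$ violates the separation condition, hence there exists $U \in \mathfrak{N}$ with $\|V - U\|_{2,\infty} < 2\varepsilon$. Note that this construction automatically keeps the approximants inside $V_{n,k}$, which is precisely what we need (and what a naive row-by-row application of Lemma \ref{lem:covS} to each $v_j$ separately would fail to provide, since the rounded rows would generally lose orthonormality).

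Next, for the cardinality bound, I would compare $nk$-dimensional Lebesgue volumes in $(\mathbb{R}^n)^k$. For each $U = (u_1, \ldots, u_k)^T \in \mathfrak{N}$, the open $\|\cdot\|_{2,\infty}$-ball of radius $\varepsilon$ centered at $U$ factorizes as the product
\[
B_\varepsilon(U) = (u_1 + \varepsilon B_2^n) \times \cdots \times (u_k + \varepsilon B_2^n) \subset \R^{nk}.
\]
Because the centers are $2\varepsilon$-separated in $\|\cdot\|_{2,\infty}$, these product balls are pairwise disjoint (an intersection point would yield two centers at row-wise distance less than $2\varepsilon$). Since each $u_j$ is a unit vector, every factor $u_j + \varepsilon B_2^n$ is contained in $(1+\varepsilon) B_2^n$, so $B_\varepsilon(U) \subset \bigl((1+\varepsilon)B_2^n\bigr)^k$ for every $U \in \mathfrak{N}$.

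The volume comparison then gives
\[
|\mathfrak{N}| \cdot \bigl(\varepsilon^{n} \vol(B_2^n)\bigr)^k \le \bigl((1+\varepsilon)^{n} \vol(B_2^n)\bigr)^k,
\]
which, after canceling the common factor $\vol(B_2^n)^k$ and taking $nk$-th roots, yields $|\mathfrak{N}| \le (1+1/\varepsilon)^{nk} \le (1+2/\varepsilon)^{nk}$, as desired. I do not expect any real obstacle here: the only conceptual point worth flagging is the need to perform the packing \emph{inside} $V_{n,k}$ rather than in the ambient product of spheres, so that the net consists of genuinely orthonormal frames; once that is done the rest is a routine product-space volume estimate.
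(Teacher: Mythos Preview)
Your argument is correct and in fact slightly sharper than the paper's (you obtain $(1+1/\varepsilon)^{nk}$ before relaxing to the stated bound), but the route is genuinely different. The paper does \emph{not} pack directly inside $V_{n,k}$. Instead it first builds an $\varepsilon$-net $\mathcal N\subset\mathbb S^{n-1}$ via Lemma~\ref{lem:covS}, takes the tensor product $\mathcal N_k=\mathcal N\times\dots\times\mathcal N\subset(\mathbb S^{n-1})^k$ of cardinality at most $(1+2/\varepsilon)^{nk}$, and then for each $U\in\mathcal N_k$ that lies within $\varepsilon$ of $V_{n,k}$ replaces it by a nearest point $\tilde U\in V_{n,k}$; the resulting $\mathfrak N$ inherits the cardinality bound, and the $2\varepsilon$ in the conclusion comes from the triangle inequality $\|V-\tilde U\|_{2,\infty}\le\|V-U\|_{2,\infty}+\|U-\tilde U\|_{2,\infty}\le\varepsilon+\varepsilon$. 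So in the paper the factor $2$ arises from a two-step approximation (ambient net followed by projection onto the Stiefel manifold), whereas in your proof it arises from choosing separation $2\varepsilon$ so that the radius-$\varepsilon$ product balls are disjoint. Your direct packing avoids the projection step entirely and explains transparently why landing in $V_{n,k}$ is automatic; the paper's approach, on the other hand, makes explicit how the Stiefel net is manufactured from a single sphere net, which is sometimes convenient when one wants to reuse the same $\mathcal N$ across several values of $k$.
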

\begin{proof} Let $\varepsilon>0$. By Lemma \ref{lem:covS}, we can construct an $\varepsilon$-net ${\mathcal N}\subset {\mathbb S}^{n-1}$
with $|{\mathcal N}|\le (1+2/\varepsilon)^n$ elements. We then consider their tensor product
$$
{\mathcal N}_k=\{U=(u_1,\dots,u_k)^T:u_j\in{\mathcal N}\ \text{for all}\ j=1,\dots,k\}.
$$

This set has at most $(1+2/\varepsilon)^{nk}$ elements but, in general, the rows of any $U\in{\mathcal N}$ are not orthogonal.
By definition, to every $V=(v_1,\dots,v_k)^T\in ({\mathbb S}^{n-1})^k={\mathbb S}^{n-1}\times \dots \times {\mathbb S}^{n-1}$ there is an $k$-tuple
$U=(u_1,\dots,u_k)^T\in{\mathcal N}_k$ with $\|U-V\|_{2,\infty}:=\max_{j=1,\dots,k}\|u_j-v_j\|_2\le\varepsilon.$ But the elements of ${\mathcal N}_k$
do not need to lie in $V_{n,k}$ in general.

We therefore obtain the net ${\mathfrak N}$ as the projection of the points from ${\mathcal N}_{k}$ into $V_{n,k}$ in the following way.
If the distance of $U\in{\mathcal N}_k$ to $V_{n,k}$ in the $\|\cdot\|_{2,\infty}$-norm is larger than $\varepsilon>0$, we leave it out. If it is smaller than that,
we add to ${\mathfrak N}$ (one of) the elements $\tilde U\in V_{n,k}$ with $\|U-\tilde U\|_{2,\infty}=\dist(U,V_{n,k})\le \varepsilon.$

If now $V\in V_{n,k}$, then there is an $U\in{\mathcal N}_k$ with $\|U-V\|_{2,\infty}\le \varepsilon$ and to this $U$, there is a $\tilde U\in{\mathfrak N}$ with
$\|\tilde U-U\|_{2,\infty}\le \varepsilon.$ We get therefore $\|V-\tilde U\|_{2,\infty}\le 2\varepsilon.$
\end{proof}
After these preparations we finally define the Gaussian information maps generated by i.i.d. Gaussian random variables.
\begin{definition} (Gaussian information map). Let ${\mathcal X}(A)=(\langle X_j,A\rangle_F)_{j=1}^m\in\R^m$,
where the matrices $X_j\in\R^{n\times N}$ are (normalized) Gaussian, i.e.
$$
(X_j)_{k,l}\sim \frac{1}{\sqrt{m}}\,{\mathcal N}(0,1)\quad\text{i.i.d.}
$$
\end{definition}

There is a number of ways how to count the ``degrees of freedom'' of a $\rank$-$r$ matrix. This or that way, it is $O(r\max(n,N))$.
It is therefore natural, that the number of measurements $m$ has to be larger than this quantity. Actually, we do not need to pay any(! - up to the
multiplicative constants) price to achieve this bound.

\begin{theo} Let $\X:\R^{n\times N}\to\R^m$ be a Gaussian information map. Then it has $\delta_r\le \delta$ with probability at least $1-\varepsilon$, if
$$
m\ge C_\delta\Big(r(n+N)+\ln(2/\varepsilon)\Big).
$$
\end{theo}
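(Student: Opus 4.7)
The strategy follows the standard three-step pattern announced in the text: concentration for one fixed low-rank matrix, an $\varepsilon$-net over the set of rank-$r$ unit-Frobenius matrices, and a perturbation argument to pass from the net back to the whole set.

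\textbf{Step 1 (pointwise concentration).} For a fixed $A\in\R^{n\times N}$ with $\|A\|_F=1$, write
\[
\|\X(A)\|_2^2=\sum_{j=1}^m \langle X_j,A\rangle_F^2.
\]
Each scalar product $\sqrt{m}\,\langle X_j,A\rangle_F$ is a linear combination $\sum_{k,l}(\sqrt{m}X_j)_{k,l}A_{k,l}$ of i.i.d.\ standard normals with coefficient vector of Euclidean norm $\|A\|_F=1$. By 2-stability (Lemma \ref{lem:N}(ii)), $\sqrt{m}\,\langle X_j,A\rangle_F$ is a standard normal and the $m$ summands are independent. Lemma \ref{Lemma:Con} (applied in both tails, as in Theorem \ref{thm:conx}) then yields
\[
\P\bigl(\bigl|\|\X(A)\|_2^2-1\bigr|\ge \delta\bigr)\le 2e^{-cm\delta^2},\qquad 0<\delta<1,
\]
for an absolute constant $c>0$. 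By homogeneity this extends to $\|A\|_F$-relative deviations for any fixed $A$.

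\textbf{Step 2 (net of unit-Frobenius rank-$r$ matrices).} Every $A$ with $\rank(A)\le r$ and $\|A\|_F=1$ admits a compact SVD $A=U\Sigma V^T$ with $U\in V_{n,r}$, $V\in V_{N,r}$, and $\Sigma=\diag(\sigma_1,\dots,\sigma_r)$ satisfying $\|\Sigma\|_F=\|A\|_F=1$, i.e.\ $(\sigma_1,\dots,\sigma_r)\in\mathbb{S}^{r-1}$. Using Lemma \ref{lem:covV} to obtain a $\eta$-net of $V_{n,r}$ of cardinality at most $(1+2/\eta)^{nr}$, a $\eta$-net of $V_{N,r}$ of cardinality at most $(1+2/\eta)^{Nr}$, and Lemma \ref{lem:covS} to get a $\eta$-net of $\mathbb{S}^{r-1}$ of cardinality at most $(1+2/\eta)^{r}$, their product yields a set $\mathcal{M}$ of rank-$r$ matrices $\tilde A=\tilde U\tilde\Sigma\tilde V^T$ with $\|\tilde A\|_F=1$ and
\[
\log|\mathcal{M}|\le (nr+Nr+r)\log(1+2/\eta)=O\bigl(r(n+N)\log(1/\eta)\bigr).
\]
Expanding $A-\tilde A=(U-\tilde U)\Sigma V^T+\tilde U(\Sigma-\tilde\Sigma)V^T+\tilde U\tilde\Sigma(V-\tilde V)^T$ and using $\|U-\tilde U\|_{2,\infty}\le 2\eta$ etc., one obtains $\|A-\tilde A\|_F\le C_0\eta$ for an absolute constant $C_0$ (roughly $C_0=6$, since there are three summands each controlled by Frobenius norms of bounded factors via row-wise bounds).

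\textbf{Step 3 (union bound).} Apply Step~1 to each element of $\mathcal{M}$ and union bound. With probability at least $1-2|\mathcal{M}|e^{-cm(\delta/2)^2}$,
\[
\bigl|\|\X(\tilde A)\|_2^2-1\bigr|\le \delta/2\qquad\text{for every }\tilde A\in\mathcal{M}.
\]
Choosing $\eta=\eta(\delta)$ a small absolute multiple of $\delta$, the condition $2|\mathcal{M}|e^{-cm(\delta/2)^2}\le\varepsilon$ becomes
\[
m\ge C_\delta\bigl(r(n+N)+\ln(2/\varepsilon)\bigr),
\]
which is precisely the hypothesis.

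\textbf{Step 4 (net to continuum).} Let $\delta_r^\ast:=\sup\{\bigl|\|\X(A)\|_2^2-1\bigr|:\rank(A)\le r,\ \|A\|_F=1\}$. For any such $A$, pick $\tilde A\in\mathcal{M}$ with $\|A-\tilde A\|_F\le C_0\eta$. Writing
\[
\|\X(A)\|_2^2-\|\X(\tilde A)\|_2^2=\langle \X(A-\tilde A),\X(A+\tilde A)\rangle,
\]
noting that $A-\tilde A$ has rank at most $2r$ and $A+\tilde A$ has rank at most $2r$ with $\|A+\tilde A\|_F\le 2$, and bounding via Cauchy--Schwarz and the supremum $\delta_{2r}^\ast$, one obtains an inequality of the form $\delta_r^\ast\le \delta/2+C_0\eta\cdot 2\sqrt{1+\delta_{2r}^\ast}$. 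A self-bounding argument (standard in compressed sensing; one first runs the same scheme for order $2r$ and chooses $\eta$ small enough so that the fixed-point inequality gives $\delta_r^\ast\le\delta$) then yields $\delta_r\le\delta$, absorbing the additional constant factor into $C_\delta$.

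The main obstacle is the last step: the difference $A-\tilde A$ of two rank-$r$ matrices is only rank-$2r$, so the perturbation term cannot be controlled by $\delta_r^\ast$ itself. One must either run Steps 1--3 at order $2r$ (changing $r(n+N)$ by a constant factor), or close the loop by a fixed-point argument. Everything else is either a direct invocation of the concentration lemma or a volumetric covering bound already prepared in the preceding lemmas.
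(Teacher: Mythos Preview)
Your Steps 1--3 match the paper's argument. The gap is in Step~4. The workaround you propose---``first run the same scheme for order $2r$''---is circular: bounding $\delta_{2r}^\ast$ by the very same method would in turn require control of $\delta_{4r}^\ast$, and the iteration never terminates. Likewise, the fixed-point inequality you wrote down involves $\delta_{2r}^\ast$ on the right, not $\delta_r^\ast$, so it does not self-bound.

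The resolution is already sitting in your Step~2. The telescoping
\[
A-\tilde A=(U-\tilde U)\Sigma V^T+\tilde U(\Sigma-\tilde\Sigma)V^T+\tilde U\tilde\Sigma(V-\tilde V)^T
\]
expresses the difference as a sum of three matrices \emph{each of rank at most $r$}, with Frobenius norms summing to at most $C_0\eta$. Hence $\|\X(A-\tilde A)\|_2\le\sum_{i}\|\X(B_i)\|_2$ is controlled by $\sqrt{1+\delta_r^\ast}\cdot C_0\eta$, not by anything at order $2r$, and the bootstrap closes at order $r$ directly. This is exactly the mechanism behind the paper's one-line appeal to Lemma~\ref{lem:norm_net} in its Step~3 (the paper nets the Frobenius \emph{ball} $D_r$ rather than the sphere and takes $\varrho=1/2$, but the substance is the same). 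So your ``main obstacle'' is illusory once you exploit the structure of the net you yourself constructed.
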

\begin{proof} We first derive a concentration inequality for one fixed $A\in\R^{n\times N}$. Then we construct a net in the set of matrices with rank at most $r$.
Finally, we take a union bound.

\emph{Step 1:} Let $A\in\R^{n\times N}$ with $\|A\|_F=1$ be fixed. We use the 2-stability of Gaussians (cf. Lemma \ref{lem:N})  and calculate
\begin{align*}
\|\X(A)\|_2^2&=\sum_{j=1}^m\langle X_j,A\rangle_F^2=\sum_{j=1}^m\Bigl(\sum_{k=1}^n\sum_{l=1}^N (X_j)_{k,l} A_{k,l}\Bigr)^2\\
&=\frac{1}{m}\sum_{j=1}^m\Bigl(\sum_{k=1}^n\sum_{l=1}^N\omega_{j,k,l}A_{k,l}\Bigr)^2\sim\frac{1}{m}\sum_{j=1}^m\Bigl(\omega_j\|A\|_F\Bigr)^2=\frac{1}{m}\sum_{j=1}^m\omega_j^2,
\end{align*}
hence (by Lemma \ref{Lemma:Con})
\begin{align*}
\P\Bigl(\Bigl|\|\X(A)\|_2^2-1\Bigr|\ge \frac{\delta}{2}\Bigr)&=\P\Bigl(\Bigl|\frac{1}{m}\sum_{j=1}^m\omega_j^2-1\Bigr|\ge\frac{\delta}{2}\Bigr)\\
&=\P\Bigl(\sum_{j=1}^m\omega_j^2\ge (1+\delta/2)m\Bigr)+\P\Bigl(\sum_{j=1}^m\omega_j^2\le(1-\delta/2)m\Bigr)\\
&\le 2e^{-Cm\delta^2}.
\end{align*}
If $\|A\|_F$ is not restricted to be equal to one, we use homogenity and obtain
\begin{align*}
\P\Bigl(\Bigl|\|\X(A)\|_2^2-\|A\|_F^2\Bigr|\ge \frac{\delta}{2}\cdot \|A\|_F^2\Bigr)\le 2e^{-Cm\delta^2}.
\end{align*}

\emph{Step 2:} Next we construct an $\varrho>0$  net (in the Frobenius norm) of the set
$$
D_r=\{A\in\R^{n\times N}:\|A\|_F\le 1,\rank(A)\le r\}
$$
with at most
$$
\Bigl(1+10/\varrho\Bigr)^{r(n+N+1)}
$$
elements.

We apply Lemma \ref{lem:covV} with $\varepsilon=\varrho/5$ to obtain a $2\varrho/5$-net ${\mathfrak N}_1\subset V_{n,r}$
and a $2\varrho/5$-net ${\mathfrak N}_2\subset V_{N,r}$, Finally, we apply \eqref{lem:covS} and the following remark to obtain
a $\varrho/5$-net ${\mathfrak N}_3$ of $B_2^r$. The set
$$
{\mathcal N}=\{\tilde U\tilde \Sigma \tilde V^T:\tilde U\in {\mathfrak N}_1, \tilde \Sigma \in {\mathfrak N}_3, \tilde V\in {\mathfrak N}_2\}
$$
has at most
\begin{align*}
\Bigl(1+\frac{10}{\varrho}\Bigr)^{r}\cdot \Bigl(1+\frac{10}{\varrho}\Bigr)^{nr}\cdot \Bigl(1+\frac{10}{\varrho}\Bigr)^{rN}=
\Bigl(1+\frac{10}{\varrho}\Bigr)^{r(1+n+N)}
\end{align*}
elements.

%Using Lemma \ref{lem:covS} there is a set ${\mathcal N}\subset {\mathbb S}^{n-1}$ with $|{\mathcal N}|\le (1+6/\varrho)^n$ such that
%$$
%\sup_{z\in{\mathbb S}^{n-1}}\inf_{y\in{\mathcal N}}\|z-y\|_2\le\varrho/3.
%$$
%We take tensor products of ${\mathcal N}$ to obtain the set
%$$
%{\mathcal N}_r=\{(u_1,\dots,u_r):u_j\in{\mathcal N}\}\subset ({\mathbb S}^{n-1})^r= \underbrace{{\mathbb S}^{n-1}\times\dots\times {\mathbb S}^{n-1}}_{r\ \text{times}}
%$$
%with $|{\mathcal N}_r|\le (1+6/\varrho)^{nr}$ such that for every $U'=(u'_1,\dots,u_r')\in V_{n,r}$ there is $(u_1,\dots,u_r)\in{\mathcal N}_r$ with
%$$
%\max_{j=1,\dots,r}\|u'_j-u_j\|_2\le\varrho/3.
%$$

Let now $A\in D_r$ with singular value decomposition $A=U\Sigma V^T$ and $\tilde A=\tilde U\tilde \Sigma\tilde V^T$.
Here, of course, $\tilde U\in {\mathfrak N}_1$ with $\|U-\tilde U\|_{2,\infty}<2\varrho/5$, 
$\tilde V\in {\mathfrak N}_2$ with $\|V-\tilde V\|_{2,\infty}<2\varrho/5$,
and $\tilde \Sigma\in{\mathfrak N}_3$ with $\|\tilde \Sigma-\Sigma\|_F<\varrho/5.$

Then
\begin{align*}
\|A-\tilde A\|_F&=\|U\Sigma V^T-\tilde U\tilde \Sigma\tilde V^T\|_F\\
&\le \|(U-\tilde U)\Sigma V^T\|_F+\|\tilde U(\Sigma-\tilde\Sigma)V^T\|_F+\|\tilde U\tilde \Sigma (V-\Tilde V)^T\|_F\\
&= \|(U-\tilde U)\Sigma \|_F+\|\Sigma-\tilde\Sigma\|_F+\|\tilde \Sigma (V-\Tilde V)^T\|_F\\
&=\|(\sigma_1(u_1-\tilde u_1),\dots,\sigma_k(u_k-\tilde u_k))\|_F+\|\sigma-\tilde\sigma\|_2\\
&\qquad+\|(\tilde\sigma_1(v_1-\tilde v_1),\dots,\tilde\sigma_k(v_k-\tilde v_k))\|_F\\
&=\Bigl(\sum_{j=1}^k\sigma_j^2\|u_j-\tilde u_j\|_2^2\Bigr)^{1/2}+\|\sigma-\tilde\sigma\|_2+\Bigl(\sum_{j=1}^k\tilde \sigma_j^2\|v_j-\tilde v_j\|_2^2\Bigr)^{1/2}\\
&\le \frac{4}{5}\cdot\varrho+\frac{1}{5}\cdot\varrho=\varrho.
\end{align*}
Therefore, ${\mathfrak N}$ is an $\varrho$-net of $D_r$ in the Frobenius norm.

\emph{Step 3:} By union bound,
\begin{equation}\label{eq:temp1}
\P\Bigl(\Bigl|\|\X(A)\|_2^2-\|A\|_F^2\Bigr|\le \frac{\delta}{2}\cdot\|A\|_F^2\quad\text{for all}\quad A\in{\mathfrak N}\Bigr)\ge 1-2\Bigl(1+\frac{10}{\varrho}\Bigr)^{r(n+N+1)}2^{-Cm\delta^2}
\end{equation}
and, by Lemma \ref{lem:norm_net},
\begin{equation}
\Bigl|\|\X(A)\|_2^2-\|A\|_F^2\Bigr|\le \frac{\delta/2}{1-\varrho}\|A\|_F^2
\end{equation}
for all $A\in D_r$ with at least the same probability. We chose $\varrho=1/2$, which leads to
\begin{equation}
\Bigl|\|\X(A)\|_2^2-\|A\|_F^2\Bigr|\le \delta\|A\|_F^2
\end{equation}
for all $A\in D_r$ with probability at least
$$
1-2\Bigl(1+20\Bigr)^{r(n+N+1)}2^{-Cm\delta^2}.
$$
Putting this larger than $1-\varepsilon$ leads to
$$
2\cdot 21^{r(n+N+1)}2^{-Cm\delta^2}\le \varepsilon
$$
and
$$
\frac{2}{\varepsilon}\cdot 21^{r(n+N+1)}\le 2^{Cm\delta^2},
$$
i.e.
$$
\ln_2(2/\varepsilon)+ r(n+N+1)\ln(21)\le Cm\delta^2
$$
and the proof is finished.

\end{proof}
\newpage

\section{Random Matrices}

The main tool in the analysis of low-rank matrix completion are concentration inequalities
of random matrices. The main aim of this section is to collect the basic results from this area.
Especially, we shall prove an analogue of Lemma \ref{Lemma:Con} on concentration of measure for random matrices.
Before we come to that, let us present the classical proof of the Bernstein inequality for random variables
and let us point out, why this proof can not be directly generalized to the non-commutative case of random matrices.

\begin{lemma}\label{lem:Bernstein1} Let $\omega_1,\dots,\omega_m$ be independent identically distributed random variables with $\E \omega_j=0$, $\E\omega_j^2\le V_0^2$ and $|\omega_j|\le 1$
almost surely for every $j=1,\dots,m.$ Then
$$
\P(|\omega_1+\dots+\omega_m|>t)\le \begin{cases}
\displaystyle 2\exp\Bigl(-\frac{t^2}{4mV_0^2}\Bigr)\qquad &\text{for}\quad t\le 2mV_0^2,\\
\displaystyle 2\exp\Bigl(-\frac{t}{2}\Bigr)\quad &\text{for}\quad t\ge 2mV_0^2.
\end{cases}
$$
\end{lemma}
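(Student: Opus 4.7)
The plan is to run the classical Chernoff/Cramér argument: bound the moment generating function of a single $\omega_j$ using the mean-zero condition together with the boundedness $|\omega_j|\le 1$, multiply over independent copies, and then optimize the free parameter $\lambda$ in two regimes corresponding to the two cases in the statement. By symmetry (replacing $\omega_j$ by $-\omega_j$) it suffices to bound $\P(\omega_1+\dots+\omega_m>t)$ and then add a factor of $2$.

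First I would use Markov's inequality in the form
$$
\P(\omega_1+\dots+\omega_m>t)\le e^{-\lambda t}\,\E\exp\!\Bigl(\lambda\sum_{j=1}^m\omega_j\Bigr)=e^{-\lambda t}\bigl(\E e^{\lambda\omega_1}\bigr)^m
$$
for $\lambda>0$, the last equality using independence. To control $\E e^{\lambda\omega_1}$ I expand the exponential and exploit $\E\omega_1=0$ together with the pointwise inequality $|\omega_1|^k\le\omega_1^2$ valid for $k\ge 2$ (because $|\omega_1|\le 1$). This gives
$$
\E e^{\lambda\omega_1}\le 1+\E\omega_1^2\sum_{k\ge 2}\frac{\lambda^k}{k!}\le 1+V_0^2(e^\lambda-1-\lambda).
$$
A short convexity check shows $e^\lambda-1-\lambda\le\lambda^2$ for $0<\lambda\le 1$, so combined with $1+x\le e^x$ one obtains the clean bound $\E e^{\lambda\omega_1}\le\exp(V_0^2\lambda^2)$. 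Plugging this back yields
$$
\P(\omega_1+\dots+\omega_m>t)\le\exp\bigl(mV_0^2\lambda^2-\lambda t\bigr),\qquad 0<\lambda\le 1.
$$

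Now the two regimes fall out by choosing $\lambda$ optimally subject to $\lambda\le 1$. If $t\le 2mV_0^2$, then the unconstrained optimizer $\lambda=t/(2mV_0^2)$ lies in $(0,1]$ and the bound becomes $\exp(-t^2/(4mV_0^2))$, matching the first case. If instead $t>2mV_0^2$, the unconstrained optimizer exceeds $1$, so I take $\lambda=1$, giving $\exp(mV_0^2-t)\le\exp(t/2-t)=\exp(-t/2)$, matching the second case. Doubling to account for the lower tail finishes the proof.

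The only delicate point is the MGF estimate; I expect the main obstacle (if any) to be verifying the inequality $e^\lambda-1-\lambda\le\lambda^2$ on $[0,1]$, which is elementary but must be checked carefully. Everything else is the standard Chernoff recipe together with the piecewise choice of $\lambda$.
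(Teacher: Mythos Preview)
Your argument is correct and follows essentially the same Chernoff scheme as the paper: Markov's inequality, an MGF bound valid for $0<\lambda\le 1$, and the same piecewise choice of $\lambda$. The only cosmetic difference is in the MGF step: the paper applies the pointwise inequality $e^{u}\le 1+u+u^{2}$ for $|u|\le 1$ directly to $u=\lambda\omega_1$, obtaining $\E e^{\lambda\omega_1}\le 1+\lambda^{2}V_0^{2}$ in one line, whereas you reach the same bound via the intermediate estimate $\E e^{\lambda\omega_1}\le 1+V_0^{2}(e^{\lambda}-1-\lambda)$ and then invoke $e^{\lambda}-1-\lambda\le\lambda^{2}$ on $[0,1]$---which is the same inequality restated.
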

\begin{proof}
We will estimate only $\P(\omega_1+\dots+\omega_m>t)$, with the second case being symmetric. We get
\begin{align*}
\P(\omega_1+\dots+\omega_m>t)&=\P(\lambda(\omega_1+\dots+\omega_m)>\lambda t)=
\P(\exp(\lambda\omega_1+\dots+\lambda\omega_m-\lambda t)>1)\\
&\le \E\exp(\lambda\omega_1+\dots+\lambda\omega_m-\lambda t)=e^{-\lambda t}[\E\exp(\lambda x_1)]^m
\end{align*}
for every $\lambda > 0.$ If also $1\ge\lambda$, we have $|\lambda \omega_1|\le 1$ almost surely and using
$1+u\le \exp(u)\le 1+u+u^2$ for every $1\ge u\ge -1$, we can further proceed
\begin{align}\label{eq:Bern'}
\P(\omega_1+\dots+\omega_m>t)&\le e^{-\lambda t}[1+\E (\lambda \omega_j)+\E (\lambda \omega_j)^2]^m\\
\notag&=e^{-\lambda t}[1+\E (\lambda x_j)^2]^m\le e^{-\lambda t}[1+\lambda^2V_0^2]^m\le e^{-\lambda t+\lambda^2V_0^2m}.
\end{align}
We now optimize over $0\le\lambda\le 1$ and put $\lambda=\frac{t}{2V_0^2m}$ for $t\le 2V_0^2m$ and $\lambda=1$ for $t\ge 2V_0^2m$,
in which case we get $\P(\dots)\le e^{-t+t/2}=e^{-t/2}.$
\end{proof}
Although this calculation is quite simple, it fails in several aspects when dealing with non-commutative random matrices:
\begin{itemize}
\item The absolute value has to be replaced by another way how to measure the distance between the mean and the actual value of a random matrix.
For matrices, we have several norms to choose from. Although they are mutually equivalent, the constants may depend on the size of the matrix.
\item The most natural candidate for ordering of matrices is the partial ordering $A\leM B$ for $B-A$ being positive semi-definite.
\item Last (but probably the most important) is the failure of the identity $\exp(A+B)=\exp(A)\exp(B)$ for non-commuting matrices $A$ and $B$.
\end{itemize}

\subsection{Golden-Thompson inequality}

This section follows the note \cite{V1} combined with some ideas from the blog of Terry Tao and other similar sources on that topic.

%The proof of Lemma \ref{lem:Bernstein1}, although quite simple, fails in several aspects, when we replace
%the random variables $\omega_1$,\dots,$\omega_m$ by random (non-commutative) matrices.

For a matrix $A\in\C^{n\times n}$ we define its matrix exponential function by
$$
\exp(A)=Id+A+\frac{A^2}{2}+\dots=\sum_{j=0}^\infty\frac{A^j}{j!},
$$
where $Id$ is the $n\times n$ identity matrix and $A^j$ is the $j^{\rm th}$ power of $A$. % and $\Sigma$ is a diagonal matrix with (complex) $\lambda_1,\dots,\lambda_n$ on the diagonal.
This formula can be used to derive several elementary properties of the matrix exponential. For example, it follows that
$$
\|\exp(A)\|\le\exp(\|A\|).
$$
Let us assume that $A$ can be diagonalized (which is the case for example for Hermitian
or real symmetric matrices)
as $A=U\Sigma U^*$, where $\Sigma$ is a diagonal matrix with (complex) $\lambda_1,\dots,\lambda_n$ on the diagonal.
Then $A^j=(U\Sigma U^*)^j=U\Sigma^jU^*$ and
$$
\exp(A)=U\Bigl(\sum_{j=0}^\infty \frac{\Sigma^j}{j!}\Bigr)U^*=U\exp(\Sigma)U^*,
$$
where $\exp(\Sigma)$ is a diagonal matrix with $\exp(\lambda_1),\dots,\exp(\lambda_n)$ on the diagonal.
Observe, that if $A$ is Hermitian (or real symmetric), then its eigenvalues are real and its exponential is therefore positive definite.

Finally, let us recall that if $A,B\in\C^{n\times n}$ are general non-commuting matrices, then the identity $\exp(A+B)=\exp(A)\cdot\exp(B)$
does not need to hold. Nevertheless, it is good to keep in mind, that this identity \emph{holds} if the matrices $A$ and $B$ commute - with
esentially the same proof as for real or complex variables. This is for example the case, when $A=B$ or when $A=Id.$
A suitable replacement is the Golden-Thompson inequality for trace-exponential mapping $A\to \tr(\exp(A))$, see below.
%ToDo: Notation on matrices
%\begin{itemize}
%\item Matrix exponential
%\item others, not done yet?
%\end{itemize}

\begin{theo} (Lie Product Formula). For arbitrary matrices $A,B\in\C^{n\times n}$ it holds
\begin{equation}\label{eq:GT:Lie1}
e^{A+B}=\lim_{N\to\infty}(e^{A/N}e^{B/N})^N.
\end{equation}
\end{theo}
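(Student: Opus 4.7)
The plan is to compare $(e^{A/N}e^{B/N})^N$ with $(e^{(A+B)/N})^N = e^{A+B}$ and show that the difference tends to zero. The key observation is that to leading order in $1/N$, the two ``building blocks'' $e^{A/N}e^{B/N}$ and $e^{(A+B)/N}$ agree: any discrepancy coming from non-commutativity of $A$ and $B$ only appears at order $1/N^2$, which is not enough to survive the $N$-fold product.

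First I would expand both factors via the defining power series. Since $\|A^j\|\le\|A\|^j$, the tails are controlled and one obtains
\begin{align*}
e^{A/N} &= I + \tfrac{A}{N} + R_A(N), \qquad \|R_A(N)\|\le \tfrac{\|A\|^2}{2N^2}e^{\|A\|/N},\\
e^{B/N} &= I + \tfrac{B}{N} + R_B(N),
\end{align*}
and similarly $e^{(A+B)/N} = I + \tfrac{A+B}{N} + R_{A+B}(N)$, with each remainder of size $O(1/N^2)$ where the implicit constant depends only on $\|A\|$ and $\|B\|$. Multiplying the first two expansions, the $I$ and $1/N$ terms match exactly, and every other term is $O(1/N^2)$. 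Hence setting $X_N := e^{A/N}e^{B/N}$ and $Y_N := e^{(A+B)/N}$, one obtains $\|X_N - Y_N\| \le C/N^2$ for some constant $C$ depending only on $\|A\|,\|B\|$.

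Next I would control the $N$-fold products via the standard telescoping identity
\[
X_N^N - Y_N^N = \sum_{k=0}^{N-1} X_N^{k}(X_N-Y_N)Y_N^{N-1-k}.
\]
Using $\|e^{A/N}\|\le e^{\|A\|/N}$ (which follows directly from the power series, as noted in the text), both $\|X_N\|$ and $\|Y_N\|$ are bounded by $e^{M/N}$ with $M:=\|A\|+\|B\|$, so $\|X_N^k\|\,\|Y_N^{N-1-k}\|\le e^{M(N-1)/N}\le e^M$. Consequently
\[
\|X_N^N - Y_N^N\| \le N\cdot e^M\cdot \tfrac{C}{N^2} = \tfrac{Ce^M}{N}\xrightarrow{N\to\infty}0.
\]
Finally, since $(A+B)/N$ commutes with itself, the power series definition gives $Y_N^N = (e^{(A+B)/N})^N = e^{A+B}$, and the formula follows.

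The main obstacle is the bookkeeping in the second step: one has to ensure that the $O(1/N^2)$ bound on $\|X_N-Y_N\|$ has constants independent of $N$, and simultaneously that the norms of $X_N^k$ and $Y_N^{N-1-k}$ remain bounded by an $N$-independent constant $e^M$. Both are handled by the single submultiplicative bound $\|e^{C}\|\le e^{\|C\|}$, which comes for free from the power series. Once these estimates are in place the telescoping sum collapses immediately.
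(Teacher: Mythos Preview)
Your proof is correct and follows essentially the same approach as the paper: Taylor-expand the building blocks to get an $O(N^{-2})$ discrepancy, telescope the difference of $N$-th powers, and bound the resulting sum using $\|e^{C}\|\le e^{\|C\|}$. Apart from swapping the roles of $X_N$ and $Y_N$ in the notation, the argument is the same.
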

\begin{proof}
Let $A,B\in\C^{n\times n}$ be fixed and let us denote
$$
X_N=e^{(A+B)/N}\quad\text{and}\quad Y_N=e^{A/N}e^{B/N}.
$$
By the Taylor's expansion we get
\begin{align*}
X_N&=Id+\frac{A+B}{N}+O(N^{-2}),\\
Y_N&=\left[Id+\frac{A}{N}+O(N^{-2})\right]\cdot\left[Id+\frac{B}{N}+O(N^{-2})\right]\\
&=Id+\frac{A}{N}+\frac{B}{N}+O(N^{-2}).
\end{align*}
This shows that $X_N-Y_N=O(N^{-2}).$ Using the telescopic sum, we obtain
\begin{align*}
e^{A+B}-(e^{A/N}e^{B/N})^N&=X_N^N-Y_N^N\\
&=(X_N^N-X_N^{N-1}Y_N)+(X_N^{N-1}Y_N-X_N^{N-2}Y_N^2)+\dots+(X_NY_N^{N-1}-Y_N^N)\\
&=X_N^{N-1}(X_N-Y_N)+X_N^{N-2}(X_N-Y_N)Y_N+\dots+(X_N-Y_N)Y_N^{N-1}
\end{align*}
and finally
\begin{align*}
\|X_N^N-Y_N^N\|&\le \|X_N-Y_N\|\cdot(\|X_N^{N-1}\|+\|X_N^{N-2}\|\cdot\|Y_N\|+\dots+\|Y_N^{N-1}\|)\\
&\le N\|X_N-Y_N\|\max(\|X_N\|,\|Y_N\|)^{N-1}.
\end{align*}
By Taylor's expansion, we have
\begin{align*}
\|X_N\|&\le e^{\|A+B\|/N}\le e^{\|A\|/N}\cdot e^{\|B\|/N}\quad\text{and}\quad \|Y_N\|\le \|e^{A/N}\|\cdot\|e^{B/N}\|\le e^{\|A\|/N}\cdot e^{\|B\|/N},
\end{align*}
hence
\begin{align*}
\|X_N^N-Y_N^N\|&\le N O(N^{-2})e^{(\|A\|+\|B\|)\cdot (N-1)/N}\to 0 \quad\text{as}\ N\to\infty.
\end{align*}
\end{proof}

Due to the finite dimension of all objects involved, \eqref{eq:GT:Lie1} holds in any norm on $\C^{n\times n}$ as well as
for the convergence in all entries. In particular, we obtain
$$
\tr\bigl(e^{A+B}\bigr)=\lim_{N\to\infty}\tr\Bigl[(e^{A/N}e^{B/N})^N\Bigr].
$$

\begin{theo} (Golden-Thompson inequality). Let $A,B\in\C^{n\times n}.$ Then
\begin{equation}
|\tr(e^{A+B})|\le \tr(e^{(A+A^*)/2}e^{(B+B^*)/2}).
\end{equation}
If $A$ and $B$ are self-adjoint,
\begin{equation}
|\tr(e^{A+B})|\le \tr(e^{A}e^{B}).
\end{equation}
\end{theo}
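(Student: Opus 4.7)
The plan is to combine the Lie product formula (just established) with a Lieb--Thirring-type trace inequality for positive semidefinite matrices. I will first handle the Hermitian case, and then reduce the general case to it.

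For the Hermitian case $A=A^*$, $B=B^*$, the matrices $P_k:=e^{A/2^k}$ and $Q_k:=e^{B/2^k}$ are positive definite and satisfy $P_k^{2^k}=e^A$, $Q_k^{2^k}=e^B$. The Lie product formula along the subsequence $N=2^k$, together with continuity of the trace, gives
\[
\tr(e^{A+B})\;=\;\lim_{k\to\infty}\tr\bigl((P_kQ_k)^{2^k}\bigr),
\]
so everything will follow from the trace inequality
\begin{equation*}
\tr\bigl((PQ)^{2^k}\bigr)\;\le\;\tr\bigl(P^{2^k}Q^{2^k}\bigr)\tag{$\star$}
\end{equation*}
for every positive semidefinite $P,Q$ and every $k\ge 0$: applied with $P=P_k$, $Q=Q_k$ and passed to the limit, it yields $\tr(e^{A+B})\le\tr(e^Ae^B)$.

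I prove $(\star)$ by induction on $k$. The base case $k=1$ is the only nontrivial spectral input: using that $PQ$ is similar to the positive semidefinite $P^{1/2}QP^{1/2}$ one finds $\tr((PQ)^2)=\sum_i\lambda_i(PQ)^2$, while $\tr(P^2Q^2)=\tr((PQ)^*(PQ))=\|PQ\|_F^2=\sum_i\sigma_i(PQ)^2$, and the inequality reduces to $|\lambda_i(M)|\le\sigma_i(M)$, which follows from Schur triangularization. For the inductive step, set $m=2^k$ and perform three moves. First, the Hilbert--Schmidt Cauchy--Schwarz applied to $M=(PQ)^m$ gives
\[
\tr\bigl((PQ)^{2m}\bigr)=\tr(M\cdot M)\le\|M\|_F^2=\tr\bigl((QP)^m(PQ)^m\bigr).
\]
Second, sub-multiplicativity of singular values $\sigma_i((QP)^m)\le\sigma_i(QP)^m$ combined with the cyclic identity $\tr((PQ^2P)^m)=\tr((P^2Q^2)^m)$ yields
\[
\tr\bigl((QP)^m(PQ)^m\bigr)=\|(QP)^m\|_F^2\le\sum_i\sigma_i(QP)^{2m}=\tr\bigl((PQ^2P)^m\bigr)=\tr\bigl((P^2Q^2)^m\bigr).
\]
Third, the induction hypothesis applied to the positive semidefinite pair $(P^2,Q^2)$ gives $\tr((P^2Q^2)^{2^k})\le\tr(P^{2^{k+1}}Q^{2^{k+1}})$, closing the induction.

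For the general, non-Hermitian case, decompose $A=H_A+iK_A$, $B=H_B+iK_B$ with Hermitian parts $H_A=(A+A^*)/2$, $H_B=(B+B^*)/2$. The estimate $|\tr M|\le\tr|M|=\|M\|_*$ together with a Lie-product-style rewriting, in which the anti-Hermitian contributions enter through unitary factors that do not increase the nuclear norm, reduces the bound on $|\tr(e^{A+B})|$ to the Hermitian inequality applied to $H_A,H_B$, yielding $|\tr(e^{A+B})|\le\tr(e^{H_A}e^{H_B})$. I expect the main obstacle to be the inductive step of $(\star)$, and specifically the second move: it is there that the non-Hermitian product $(QP)^m(PQ)^m$ has to be converted, via singular-value sub-multiplicativity and the correct cyclic trace identity, into the positive semidefinite expression $(P^2Q^2)^m$ on which the induction hypothesis can operate.
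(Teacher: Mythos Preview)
Your overall architecture---Lie product formula along $N=2^k$ plus the Lieb--Thirring-type inequality $(\star)$---is sound and is essentially the route the paper takes. But the ``second move'' in your inductive step contains a genuine error: the entrywise inequality $\sigma_i((QP)^m)\le\sigma_i(QP)^m$ is \emph{false}. For instance, with
\[
P=\begin{pmatrix}2&1\\1&1\end{pmatrix},\qquad Q=\begin{pmatrix}1&0\\0&3\end{pmatrix},
\]
both positive definite, one computes $\sigma_2(QP)^2\approx 0.398$ while $\sigma_2((QP)^2)\approx 0.427$. What \emph{is} true is the summed inequality $\sum_i\sigma_i((QP)^m)^2\le\sum_i\sigma_i(QP)^{2m}$, but this requires Horn's log-majorization $\prod_{i\le k}\sigma_i(M^m)\le\prod_{i\le k}\sigma_i(M)^m$ together with a majorization argument---a nontrivial spectral input you do not invoke. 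The same issue occurs already in your base case $k=1$, where the entrywise claim $|\lambda_i(M)|\le\sigma_i(M)$ is also false in general; there the Schur-triangularization argument you mention does yield $\sum|\lambda_i|^2\le\sum\sigma_i^2$ directly, so the base case survives, but the inductive step does not.

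The paper avoids this problem entirely. It proves, by iterated Cauchy--Schwarz and cyclicity of the trace alone, the inequality $|\tr(Z^{2^N})|\le\tr\bigl[(ZZ^*)^{2^{N-1}}\bigr]$ for arbitrary $Z$ (via a more general bound $|\tr(A_1\cdots A_{2^n})|\le\prod_j\bigl(\tr[(A_jA_j^*)^{2^{n-1}}]\bigr)^{1/2^n}$). Applied with $Z=PQ$ this yields your steps~1 and~2 combined in one stroke: $\tr((PQ)^{2m})\le\tr((PQ^2P)^m)=\tr((P^2Q^2)^m)$, with no singular-value machinery required. The paper then iterates this same inequality (rather than inducting on $(\star)$), and---since the inequality holds for arbitrary $Z$---obtains the non-Hermitian statement directly, without needing the separate reduction you sketch in your final paragraph. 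That reduction, incidentally, is too vague as written: once you mix unitary factors between the Hermitian ones the product is no longer of the form $(P_kQ_k)^{2^k}$, and bounding $|\tr(\cdot)|$ by $\|\cdot\|_*$ does not obviously reconnect to $\tr(e^{H_A}e^{H_B})$.
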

\begin{proof}
For a natural number $N$, put $X=e^{A/2^N}$ and $Y=e^{B/2^N}$. We will show that
\begin{equation}\label{eq:GT:1}
|\tr[(XY)^{2^N}]|\le \tr[(XX^*)^{2^{N-1}}(YY^*)^{2^{N-1}}],
\end{equation}
i.e.
\begin{equation}\label{eq:GT:2}
|\tr[(e^{A/2^N}e^{B/2^N})^{2^N}]|\le \tr[(e^{A/2^N}e^{A^*/2^N})^{2^{N-1}}(e^{B/2^N}e^{B^*/2^N})^{2^{N-1}}].
\end{equation}

The left-hand side then converges by the Lie Product Formula to $|\tr(e^{A+B})|$ and the right-hand side to $\tr(e^{(A+A^*)/2}e^{(B+B^*)/2})$.

The proof of \eqref{eq:GT:1} is based on the following two simple facts:
\begin{enumerate}
\item[(i)] The trace is cyclic, i.e.
$$
\tr(XY)=\sum_{j=1}^n (XY)_{jj}=\sum_{j=1}^n\sum_{k=1}^n X_{jk}Y_{kj}=\sum_{k=1}^n\sum_{j=1}^nY_{kj}X_{jk}=\sum_{k=1}^n (YX)_{kk}=\tr(YX).
$$
\item[(ii)] For an arbitrary $W,Z\in\C^{n\times n}$ one has by Cauchy-Schwartz inequality
%\begin{align*}
%|\tr(Z^2)|&=\biggl|\sum_{j=1}^n (Z^2)_{jj}\biggr|=\biggl|\sum_{j=1}^n \sum_{k=1}^n Z_{jk} Z_{kj}\biggr|
%\le \Bigl(\sum_{j,k=1}^n|Z_{j,k}|^2\Bigr)^{1/2}\cdot\Bigl(\sum_{j,k=1}^n|Z_{j,k}|^2\Bigr)^{1/2}\\
%&=\sum_{j,k=1}^n|Z_{j,k}|^2=\sum_{j=1}^n Z_{jk}\overline{Z_{jk}}=\sum_{j=1}^n (ZZ^*)_{jj}=\tr(ZZ^*).
%\end{align*}
\begin{align}
\notag|\tr(WZ)|&=\biggl|\sum_{j=1}^n (WZ)_{jj}\biggr|=\biggl|\sum_{j=1}^n \sum_{k=1}^n W_{jk} Z_{kj}\biggr|
\le \Bigl(\sum_{j,k=1}^n|W_{jk}|^2\Bigr)^{1/2}\cdot\Bigl(\sum_{j,k=1}^n|Z_{jk}|^2\Bigr)^{1/2}\\
\label{eq:GT:3}&=\Bigl(\sum_{j,k=1}^n W_{jk}\overline{W_{jk}}\Bigr)^{1/2} \cdot \Bigl(\sum_{j,k=1}^n Z_{jk}\overline{Z_{jk}}\Bigr)^{1/2}=\sqrt{ \tr(WW^*) \tr(ZZ^*)}.
\end{align}
\end{enumerate}
This can be further generalized to
\begin{align}\label{eq:GT:5}
|\tr(A_1A_2\dots A_{2^n})|\le \prod_{j=1}^{2^n}\Bigl(\tr[(A_jA_j^*)^{2^{n-1}}]\Bigr)^{1/2^n}.
\end{align}
Indeed, for $n=1$, this is just \eqref{eq:GT:3}. The induction step is then
\begin{align}
\notag|\tr(A_1A_2\dots A_{2^{n+1}})|&=|\tr[(A_1A_2)(A_3A_4)\dots]|\\
\label{eq:GT:4}&\le \Bigl(\tr[(A_1A_2(A_1A_2)^*)^{2^{n-1}}]\Bigr)^{1/2^n}\cdot \dots\\
\notag&= \Bigl(\tr[(A_1A_2A_2^*A_1^*)^{2^{n-1}}]\Bigr)^{1/2^n}\cdot \dots\\
\notag&= \Bigl(\tr[(A_1^*A_1A_2A_2^*)^{2^{n-1}}]\Bigr)^{1/2^n}\cdot \dots,
\end{align}
where the dots represent similar terms for the pairs $(A_3,A_4),$ etc.

Now we use \eqref{eq:GT:5} again for the $2^n$ matrices $(A_1^*A_1), (A_2^*A_2), \dots,(A_1^*A_1), (A_2^*A_2)$
and obtain
\begin{align*}
\tr[(A_1^*A_1A_2A_2^*)^{2^{n-1}}]&=\tr[(A^*_1A_1)(A_2A_2^*)\dots(A^*_1A_1)(A_2A_2^*)]\\
&\le \{\tr[ ((A_1^*A_1)(A_1^*A_1)^*)^{2^{n-1}}]\}^{2^{n-1}/2^n}\cdot \{\tr[ ((A_2^*A_2)(A_2^*A_2)^*)^{2^{n-1}}]\}^{2^{n-1}/2^n}\\
&= \{\tr[ (A_1^*A_1)^{2^{n}}]\}^{1/2}\cdot \{\tr[ (A_2^*A_2)^{2^{n}}]\}^{1/2}\\
&= \{\tr[ (A_1A_1^*)^{2^{n}}]\}^{1/2}\cdot \{\tr[ (A_2A_2^*)^{2^{n}}]\}^{1/2}.
\end{align*}
Inserting this into \eqref{eq:GT:4}, we obtain
$$
|\tr(A_1A_2\dots A_{2^{n+1}})|\le \prod_{j=1}^{2^{n+1}}\Bigl(\tr[ (A_jA_j^*)^{2^n}]\Bigr)^{1/2^{n+1}},
$$
finishing the proof of \eqref{eq:GT:5}. If $A_1=A_2=\dots=A_{2^N}=Z$, \eqref{eq:GT:5} reduces to
\begin{equation}\label{eq:GT:6}
|\tr(Z^{2^N})|\le \tr[(ZZ^*)^{2^{N-1}}].
\end{equation}

To prove \eqref{eq:GT:1}, we apply \eqref{eq:GT:6} to $Z=XY$ and obtain
\begin{align*}
|\tr[(XY)^{2^N}]|&\le \tr\{[(XY)(XY)^*]^{2^{N-1}}\}=\tr[(XYY^*X^*)^{2^{N-1}}]=\tr[(X^*XYY^*)^{2^{N-1}}]\\
&\le \tr\{[(X^*XYY^*)\cdot (X^*XYY^*)^*]^{2^{N-2}}\}=\tr\{[X^*X(YY^*)^2(X^*X)]^{2^{N-2}}\}\\
&=\tr\{[(X^*X)^2(YY^*)^2]^{2^{N-2}}\},
\end{align*}
where we have used the cyclicity of the trace and \eqref{eq:GT:6} with $Z=(X^*XYY^*)$.
Iterating the same procedure, we further obtain by \eqref{eq:GT:6} with $Z=(X^*X)^2(YY^*)^2$
\begin{align*}
\tr[(XY)^{2^N}]|&\le \tr\{[(X^*X)^2(YY^*)^2]^{2^{N-2}}\}\le \tr \{[(X^*X)^2(YY^*)^2((X^*X)^2(YY^*)^2)^*]^{2^{N-3}}\}\\
&=\tr \{[(X^*X)^2(YY^*)^4(X^*X)^2]^{2^{N-3}}\}=\tr \{[(X^*X)^4(YY^*)^4]^{2^{N-3}}\}
\end{align*}
leading to \eqref{eq:GT:1} after further iterations.

%\begin{align*}
%\tr((XY)^{2^N})&=\tr((XY)^{2^{N-1}}\cdot (XY)^{2^{N-1}})\le \tr ((XY)^{2^{N-1}}[(XY)^{2^{N-1}}]^*)\\
%&=\tr ((XY)^{2^{N-1}}((XY)^*)^{2^{N-1}})=\tr ((XY)^{2^{N-1}}(Y^*X^*)^{2^{N-1}})
%\end{align*}
\end{proof}

\subsection{Non-commutative Bernstein inequality}

Lemma \ref{Lemma:Con} in the form of Lemma \ref{lem:Bernstein1} can now be generalized to matrices.
This part follows \cite{G} with many forerunners, cf. \cite{AW,T}. In what follows, $\|\cdot\|$ denotes
the spectral norm of a matrix and $\leM$ stands for the partial ordering, i.e. $A\leM B$ if $B-A$ is positive semi-definite.
Finally, we will use the linearity of the trace to obtain for every random matrix $\E\tr(A)=\tr (\E A)$.
Last, but not least, if $A$ and $B$ are two independent random matrices, we have $\E(AB)=(\E A)(\E B).$

\begin{theo}\label{theo:Bern1} Let $X_1,\dots,X_m$ be i.i.d. self-adjoint random matrices with ${\mathbb E}X_i=0$. Assume that $\|{\mathbb E}[X_i^2]\|\le V_0^2$ and
$\|X_i\|\le c$ almost surely. Then
$$
{\mathbb P}\Bigl(\|X_1+\dots+X_m\|>t\Bigr)\le 2n\exp\Bigl(-\frac{t^2}{4mV_0^2}\Bigr),\quad t\le 2mV_0^2/c
$$
and
$$
{\mathbb P}\Bigl(\|X_1+\dots+X_m\|>t\Bigr)\le 2n\exp\Bigl(-\frac{t}{2c}\Bigr),\quad t> 2mV_0^2/c.
$$
\end{theo}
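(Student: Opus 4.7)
The plan is to mimic the scalar proof in Lemma \ref{lem:Bernstein1} as closely as possible, using the matrix tools developed above (the partial ordering $\leM$, the matrix exponential, and most importantly the Golden--Thompson inequality) to compensate for the failure of $\exp(A+B)=\exp(A)\exp(B)$.

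First I would reduce the operator-norm tail to a one-sided eigenvalue tail. Since each $X_i$ is self-adjoint, so is $S_m:=X_1+\dots+X_m$, and $\|S_m\|=\max(\lambda_{\max}(S_m),-\lambda_{\min}(S_m))$. Applying the same argument to $-X_i$ handles the lower tail, so it suffices to bound $\P(\lambda_{\max}(S_m)>t)$ and pay a factor of $2$. For $\lambda>0$ the event $\{\lambda_{\max}(S_m)>t\}$ implies $\exp(\lambda\lambda_{\max}(S_m))>e^{\lambda t}$, and since $\exp(\lambda S_m)\succcurlyeq 0$ we have $\tr\exp(\lambda S_m)\ge\exp(\lambda\lambda_{\max}(S_m))$. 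Markov's inequality therefore gives
\begin{equation*}
\P(\lambda_{\max}(S_m)>t)\le e^{-\lambda t}\E\tr\exp(\lambda S_m).
\end{equation*}

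Next I would use Golden--Thompson to peel off one factor at a time. Writing $S_m=S_{m-1}+X_m$, Golden--Thompson gives $\tr\exp(\lambda S_m)\le\tr[\exp(\lambda S_{m-1})\exp(\lambda X_m)]$. By independence of $X_m$ from $S_{m-1}$, taking conditional expectations pushes $\E$ inside: $\E\tr[\exp(\lambda S_{m-1})\exp(\lambda X_m)]=\E\tr[\exp(\lambda S_{m-1})\,\E\exp(\lambda X_m)]$. Then I use the standard inequality $\tr(MN)\le\|N\|\tr(M)$, valid whenever $M\succcurlyeq 0$ (here $M=\exp(\lambda S_{m-1})$), to obtain $\E\tr\exp(\lambda S_m)\le\|\E\exp(\lambda X_m)\|\cdot\E\tr\exp(\lambda S_{m-1})$. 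Iterating $m$ times and using $\tr(Id)=n$ at the base step yields
\begin{equation*}
\E\tr\exp(\lambda S_m)\le n\,\|\E\exp(\lambda X_1)\|^{m}.
\end{equation*}

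Third, I would control the single-variable moment generating function $\|\E\exp(\lambda X_1)\|$. The elementary inequality $e^u\le 1+u+u^2$ valid for $|u|\le 1$ lifts to matrices by spectral calculus: if $\|\lambda X_1\|\le 1$, i.e.\ $\lambda\le 1/c$, then the eigenvalues $\mu$ of $\lambda X_1$ all satisfy $|\mu|\le 1$, and applying the scalar inequality eigenvalue-by-eigenvalue gives $\exp(\lambda X_1)\leM Id+\lambda X_1+\lambda^2 X_1^2$. Taking expectations (which preserves $\leM$) and using $\E X_1=0$ together with $\E X_1^2\leM V_0^2\,Id$ yields $\E\exp(\lambda X_1)\leM(1+\lambda^2V_0^2)Id$, so $\|\E\exp(\lambda X_1)\|\le 1+\lambda^2V_0^2\le e^{\lambda^2V_0^2}$. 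Substituting back, for any $0<\lambda\le 1/c$,
\begin{equation*}
\P(\lambda_{\max}(S_m)>t)\le n\exp(-\lambda t+\lambda^2 V_0^2 m).
\end{equation*}

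Finally I would optimize in $\lambda$ subject to the constraint $\lambda\in(0,1/c]$. The unconstrained minimizer is $\lambda^{\ast}=t/(2V_0^2 m)$; it is admissible iff $t\le 2mV_0^2/c$, in which case plugging in gives the Gaussian-type bound $n\exp(-t^2/(4mV_0^2))$. If $t>2mV_0^2/c$ I take the boundary value $\lambda=1/c$, and the bracket reduces to $-t/c+V_0^2 m/c^2\le -t/(2c)$ by the case assumption, yielding $n\exp(-t/(2c))$. Doubling to account for $\lambda_{\min}$ gives the factor $2n$ in both regimes.

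The main obstacle, and the place where the non-commutative case genuinely differs from Lemma \ref{lem:Bernstein1}, is the peeling step: in the scalar proof one simply factorizes $\E\exp(\lambda\sum\omega_i)=\prod\E\exp(\lambda\omega_i)$, which fails here because $\exp(A+B)\ne\exp(A)\exp(B)$. Golden--Thompson plus the PSD trace inequality $\tr(MN)\le\|N\|\tr(M)$ is precisely what replaces this factorization and is responsible for the extra dimensional factor $n$ appearing in the final bound.
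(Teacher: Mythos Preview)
Your proof is correct and follows essentially the same route as the paper: reduce to a one-sided tail via $\lambda_{\max}$, apply Markov to land on $e^{-\lambda t}\E\tr\exp(\lambda S_m)$, peel off one summand at a time using Golden--Thompson together with $\tr(MN)\le\|N\|\tr(M)$ for $M\succcurlyeq 0$, bound the single-matrix MGF via $e^Y\leM Id+Y+Y^2$ for $\|Y\|\le 1$, and optimize over $\lambda\in(0,1/c]$. The only cosmetic differences are that the paper phrases the Markov step through an indicator-type function $\theta(A)$ rather than directly via $\lambda_{\max}$, and stops the iteration at $\E\tr\exp(\lambda X_1)\le n\|\E\exp(\lambda X_1)\|$ instead of peeling all the way down to $\tr(Id)=n$; both yield the same bound.
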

\begin{proof}
First we define a matrix function
$$
\theta(A)=\begin{cases}0,&\text{if}\ A\leM Id,\\ 1 &\text{if}\ A\not\leM Id.\end{cases}
$$
Also, let us note that if $A$ is positive semi-definite (i.e. $A\geM 0$), then $\theta(A)\le\tr(A).$
Let $\lambda>0$ and let $X$ be a random self-adjoint matrix. Then
\begin{align}
\notag \P(X\not\leM t\cdot Id)&=\P(X-t\cdot Id\not\leM 0)=\P(\lambda X-\lambda t\cdot Id \not\leM 0)\\
\label{eq:later1}&=\P(e^{\lambda X-\lambda t\cdot Id}\not\leM Id)=\E\, \theta(e^{\lambda X-\lambda t\cdot Id})\le \E\,\tr(e^{\lambda X-\lambda t\cdot Id})\\
\notag &=\E\,\tr(e^{\lambda X}\cdot e^{-\lambda t\cdot Id})=\E\,\tr(e^{\lambda X}\cdot (e^{-\lambda t} Id))=e^{-\lambda t}\E\,\tr(e^{\lambda X}).
\end{align}
Take now $\sum_{j=1}^m X_j$ for $X$ and obtain by Golden-Thompson inequality
\begin{align*}
\E\Bigl[\tr\exp\Bigl(\lambda \sum_{j=1}^m X_j\Bigr)\Bigr]&=\E\Bigl[\tr\exp\Bigl(\lambda \sum_{j=1}^{m-1} X_j+\lambda X_m\Bigr)\Bigr]
\le \E \Bigl[\tr \Bigl(\exp \Bigl(\lambda\sum_{j=1}^{m-1}X_j\Bigr)\exp(\lambda X_m)\Bigr)\Bigr]\\
&=\tr\Bigl\{\E\exp \Bigl(\lambda\sum_{j=1}^{m-1}X_j\Bigr)\cdot \E \exp(\lambda X_m) \Bigr\}.
\end{align*}
As both the expected values are positive semi-definite matrices, we may use the inequality $\tr (AB)\le \tr(A)\cdot\|B\|$ and obtain
\begin{align*}
\E\Bigl[\tr\exp\Bigl(\lambda \sum_{j=1}^m X_j\Bigr)\Bigr]&\le \tr\Bigl\{\E\exp \Bigl(\lambda\sum_{j=1}^{m-1}X_j\Bigr)  \Bigr\}\cdot\Bigl\|\E \exp(\lambda X_m)\Bigr\|\\
&= \E \Bigl[\tr \exp \Bigl(\lambda\sum_{j=1}^{m-1}X_j\Bigr)  \Bigr]\cdot\Bigl\|\E \exp(\lambda X_m)\Bigr\|.
\end{align*}
This procedure can be iterated until we reach
\begin{align*}
\E\Bigl[\tr\exp\Bigl(\lambda \sum_{j=1}^m X_j\Bigr)\Bigr]&= \E \Bigl[\tr \exp (\lambda X_1)  \Bigr]\cdot\prod_{j=2}^m \Bigl\|\E \exp(\lambda X_j)\Bigr\|.
\end{align*}
Using that $X_j$ are identically distributed and that 
$$
\E \Bigl[\tr \exp (\lambda X_1)  \Bigr] = \tr\, \E \exp (\lambda X_1)\le n\|\E \exp (\lambda X_1)\|,
$$
we get
$$
\P\Bigl(\sum_{j=1}^m X_j\not\leM t\cdot Id\Bigr)\le e^{-\lambda t}\E\Bigl[\tr\exp\Bigl(\lambda \sum_{j=1}^m X_j\Bigr)\Bigr]\le n e^{-\lambda t}\Bigl\|\E \exp(\lambda X_1)\Bigr\|^m.
$$
As $1+y\le e^y\le 1+y+y^2$ for real $-1\le y\le 1$, we get $e^Y\leM Id+Y+Y^2$ for $\|Y\|\le 1$. Hence (for random self-adjoint $Y$ with $\|Y\|\le 1$ and $\E\, Y=0$)
$$
\E[e^Y]\leM \E[Id + Y + Y^2]=\E[Id+Y^2]=Id+\E[Y^2]\leM \exp(\E[Y^2])
$$
and
$$
\|\E[e^Y]\|\le \|\exp(\E[Y^2])\|=e^{\|\E[Y^2]\|}.
$$
If $\|\lambda X_1\|\le 1$ (i.e. if $\lambda\le 1/c$) we may estimate
$$
\Bigl\|\E \exp(\lambda X_1)\Bigr\|^m\le [\exp(\lambda^2V_0^2)]^m
$$
and (cf. \eqref{eq:Bern'})
$$
\P\Bigl(\sum_{j=1}^m X_j\not\leM t\cdot Id\Bigr)\le n\exp(-\lambda t+m\lambda^2V_0^2).
$$
If $t<2mV_0^2/c$, we choose $\lambda=\frac{t}{2mV_0^2}<1/c$ and get $\le n\exp(-t^2/(4mV_0^2))$.
If $t\ge 2mV_0^2/c$, we choose $\lambda=1/c$ and get $\le n\exp(-t/c+mV_0^2/c^2)\le n\exp(-t/c+ct/(2c^2))=n\exp(-t/(2c)).$

Finally, $\|X_1+\dots+X_m\|>t$ if $X_1+\dots+X_m\not\leM t\cdot Id$ or $X_1+\dots+X_m\not \geM -t\cdot Id$, explaining the factor 2.
\end{proof}

\subsection{Lieb's theorem}

An alternative approach to non-commutative Bernstein inequality is based on Lieb's theorem, and is due to Tropp \cite{T}.
We will need the notion of a logarithm of a matrix, but it will be enough to consider positive definite matrices.
If $A$ is a positive definite Hermitian matrix, then it can be written (in a unique way) as $A=\exp(X)$, where $X$
is a Hermitian matrix. This matrix $X$ is then called logarithm of $A$, i.e. $X=\log(A).$

\begin{theo} (Lieb). Fix a self-adjoint matrix $H$. Then the function
$$
A\to \tr\exp(H+\log(A))
$$
is concave on the cone of positive definite Hermitian matrices.
\end{theo}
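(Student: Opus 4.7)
The plan is to derive Lieb's theorem from the Lie product formula (already proved above in this section) together with \emph{Lieb's concavity theorem} and its multivariate extension. In its two-variable form, Lieb's concavity theorem states that for every matrix $K \in \C^{n \times n}$ and every $p \in (0,1)$,
$$
(A,B) \mapsto \tr\bigl(K^{*} A^{p} K B^{1-p}\bigr)
$$
is jointly concave on pairs of positive definite Hermitian matrices. I will actually need the $N$-variable extension: for positive reals $p_1,\dots,p_N$ with $\sum_{i} p_i = 1$ and matrices $K_1,\dots,K_N$, the map
$$
(A_1,\dots,A_N)\;\mapsto\;\tr\Bigl(\prod_{i=1}^{N} K_i\,A_i^{p_i}\Bigr)
$$
is jointly concave in the $A_i \succ 0$.

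Granted this concavity statement, Lieb's theorem falls out quickly. Applying the Lie product formula to $H$ and $\log A$,
$$
\exp(H+\log A)\;=\;\lim_{N\to\infty}\bigl(e^{H/N}\,A^{1/N}\bigr)^{N},
$$
and continuity of the trace then gives
$$
\tr\exp(H+\log A)\;=\;\lim_{N\to\infty}\Phi_N(A),\qquad \Phi_N(A)\;:=\;\tr\bigl(e^{H/N}A^{1/N}\bigr)^{N}.
$$
Fix $N$ and set $K=e^{H/N}$, so that
$$
\Phi_N(A)\;=\;\tr\bigl(K\,A^{1/N}\,K\,A^{1/N}\,\cdots\,K\,A^{1/N}\bigr).
$$
This is precisely the multivariate Lieb concavity with all $K_i=K$ and all $p_i=1/N$, evaluated on the diagonal $A_1=\cdots=A_N=A$; joint concavity restricts to concavity of the diagonal map, so each $\Phi_N$ is concave in $A \succ 0$. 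Since a pointwise limit of concave functions is concave, $A \mapsto \tr\exp(H+\log A)$ is concave, which is Lieb's theorem.

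The main obstacle is the concavity statement itself, which is genuinely non-elementary. A standard route has three steps: (i) prove operator concavity of the power function $t \mapsto t^{p}$ on $(0,\infty)$ for $p\in(0,1)$, for example via its integral representation as a positive superposition of the elementary operator-concave functions $t\mapsto st/(s+t)$; (ii) upgrade this to the two-variable joint concavity of $(A,B)\mapsto\tr(K^{*}A^{p}KB^{1-p})$, either by a $2\times 2$ block-matrix/Schur-complement argument or by complex interpolation between the trivial boundary values at $p=0$ and $p=1$; (iii) extend to $N$ variables, either inductively or via the tensor-product/partial-trace reformulation of Lieb's concavity. Each of these steps requires machinery (operator monotone functions, L\"owner's theorem, analytic interpolation) that has not been developed in the excerpt so far, so a self-contained exposition would need a substantial preparatory detour — but once the concavity statement is in hand, the reduction via Lie--Trotter above is essentially painless.
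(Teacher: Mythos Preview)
The paper does not prove Lieb's theorem: it is stated and then immediately applied (via Jensen's inequality) to derive the master tail bound, with no argument given. So there is no ``paper's own proof'' to compare against; your proposal is an attempt to fill a gap the text deliberately leaves open.

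Your reduction via the Lie product formula is correct and is one of the standard routes in the literature (it is essentially the argument Tropp sketches): writing $\tr\exp(H+\log A)=\lim_{N\to\infty}\tr\bigl[(e^{H/N}A^{1/N})^N\bigr]$ and then arguing that each approximant is concave in $A$ is sound, and pointwise limits of concave functions are indeed concave.

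Two caveats. First, your multivariate concavity statement as written is too loose: for arbitrary $K_1,\dots,K_N$ the quantity $\tr\bigl(\prod_i K_iA_i^{p_i}\bigr)$ need not even be real, so ``jointly concave'' is meaningless without further hypotheses. What you actually use is the special case where all $K_i$ equal the single positive-definite matrix $e^{H/N}$; in that symmetric situation the trace is real (by cyclicity and taking adjoints) and the joint concavity does hold. You should state the needed version precisely rather than the over-general one. Second, and you acknowledge this yourself, the concavity input you are invoking is essentially of the same depth as the theorem you are proving --- both are due to Lieb's 1973 paper and neither is elementary. So your proposal is really a reduction of one hard theorem to another, not a self-contained proof. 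That is perfectly legitimate as a proof strategy, but it means the ``substantial preparatory detour'' you mention is not optional: without it, you have replaced one black box by another of comparable size.
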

Lieb's theorem allows for an estimate of the expected value of a trace exponential. Indeed, let $H$ be a fixed self-adjoint matrix
and let $X$ be a random self-adjoint matrix. Define random positive definite matrix $Y=e^X$. Then (by Lieb's theorem and Jensen's inequality)
\begin{align}
\notag \E\,\tr\exp(H+X)&=\E\,\tr\exp(H+\log(e^X))=\E\,\tr\exp(H+\log(Y))\\
\label{eq:Lieb:1}&\le \tr\exp(H+\log(\E\, Y))=\tr\exp(H+\log(\E\, e^X)).
\end{align}
Different choices of $H$ then finally lead to an estimate
\begin{align}
\notag \E\,\tr\exp \Bigl(\sum_{j=1}^N \theta X_j\Bigr)&=\E_1\,\E_2\,\dots\E_N\,\tr\exp \Bigl(\underbrace{\sum_{j=1}^{N-1} \theta X_j}_{H}+\theta X_N\Bigr)\\
\notag &\le\E_1\,\E_2\,\dots\E_{N-1}\,\tr\exp \Bigl(\sum_{j=1}^{N-1} \theta X_j+\log(\E_N\, e^{\theta X_N})\Bigr)\\
\label{eq:Lieb:4} &=\E_1\,\E_2\,\dots\E_{N-1}\,\tr\exp \Bigl(\underbrace{\sum_{j=1}^{N-2} \theta X_j+\log(\E_N\, e^{\theta X_N})}_{H}+\theta X_{N-1}\Bigr)\\
\notag &\le\E_1\,\E_2\,\dots\E_{N-2}\,\tr\exp \Bigl(\sum_{j=1}^{N-2} \theta X_j+\log(\E_N\, e^{\theta X_N})+\log(\E_{N-1}\, e^{\theta X_{N-1}})\Bigr)\\
\notag &\vdots\\
\notag &\le\E_1\,\tr\exp \Bigl(\underbrace{\sum_{j=2}^{N} \log(\E_j\, e^{\theta X_j})}_{H}+\theta X_1\Bigr)\\
\notag &\le \tr\exp\Bigl(\sum_{j=1}^N \log(\E_j\, e^{\theta X_j})\Bigr).
\end{align}

We use this approach to prove the following version of Bernstein's inequality.
\begin{theo}
Let $X_j, i=1,\dots,N$ be independent centered (i.e. $\E\, X_j=0$) self-adjoint random $n\times n$ matrices. Assume that for some numbers $K,\sigma>0$
\begin{equation}\label{eq:Lieb:2}
\|X_i\|\le K\ \text{a.s.}\quad \text{and}\quad \Bigl\|\sum_{j=1}^N\E\,X_j^2\Bigr\|\le\sigma^2.
\end{equation}
Then for every $t\ge 0$ we have
\begin{equation}\label{eq:Lieb:3}
\P\Bigl(\Bigl\|\sum_{j=1}^N X_j\Bigr\|> t\Bigr)\le 2n\exp\Bigl(\frac{-t^2/2}{\sigma^2+Kt/3}\Bigr).
\end{equation}
\end{theo}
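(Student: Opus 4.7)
The plan is to follow the Laplace-transform argument of Theorem \ref{theo:Bern1}, but to replace the Golden--Thompson step with the sharper Lieb-based estimate \eqref{eq:Lieb:4}. Concretely, it suffices to bound $\P(\sum_j X_j \not\leM t\cdot Id)$; the opposite deviation follows by applying the same argument to $-X_j$, which produces the factor $2$ in \eqref{eq:Lieb:3}. By \eqref{eq:later1}, for every $\theta>0$,
$$
\P\Bigl(\sum_{j=1}^N X_j\not\leM t\cdot Id\Bigr)\le e^{-\theta t}\,\E\tr\exp\Bigl(\theta\sum_{j=1}^N X_j\Bigr),
$$
and iterating Lieb's theorem as in \eqref{eq:Lieb:4} gives
$$
\E\tr\exp\Bigl(\theta\sum_{j=1}^N X_j\Bigr)\le \tr\exp\Bigl(\sum_{j=1}^N \log\bigl(\E\,e^{\theta X_j}\bigr)\Bigr).
$$

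The core analytic input is a semidefinite bound on the matrix moment generating function $\E e^{\theta X_j}$. The scalar inequality $e^u \le 1 + u + u^2 g(\theta K)$ with $g(s) := (e^s - 1 - s)/s^2$ holds for $|u|\le \theta K$, since $s\mapsto (e^s-1-s)/s^2$ is increasing. Applied via the spectral calculus to $X_j$ (which has $\|X_j\|\le K$ a.s.) this yields $e^{\theta X_j}\leM Id + \theta X_j + g(\theta K)\,X_j^2$; taking expectations and using $\E X_j=0$ gives
$$
\E\,e^{\theta X_j} \leM Id + g(\theta K)\,\E X_j^2.
$$
Operator monotonicity of $\log$ on positive definite matrices, combined with the scalar inequality $\log(1+x)\le x$ (applied to the positive semidefinite argument via functional calculus), then yields the key estimate
$$
\log\bigl(\E\,e^{\theta X_j}\bigr) \leM g(\theta K)\,\E X_j^2.
$$

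Summing over $j$ and using the monotonicity of $\tr\exp$ with respect to the PSD order (a consequence of Weyl's monotonicity principle applied in the form of Lemma \ref{lem:Lid}), together with $\tr\exp(A)\le n\,\|\exp A\| = n\,\exp(\lambda_{\max}(A))$, I arrive at
$$
\P\Bigl(\sum_{j=1}^N X_j\not\leM t\cdot Id\Bigr) \le n\exp\bigl(-\theta t + g(\theta K)\,\sigma^2\bigr),
$$
where I used the variance hypothesis $\bigl\|\sum_j \E X_j^2\bigr\|\le \sigma^2$. It remains to optimize in $\theta>0$. Here I invoke the standard numerical inequality $g(s)\le \dfrac{1}{2(1-s/3)}$ for $0\le s<3$, and pick $\theta = t/(\sigma^2 + Kt/3)$. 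A short calculation then gives
$$
-\theta t + g(\theta K)\,\sigma^2 \le -\theta t + \frac{\theta^2\sigma^2/2}{1-\theta K/3} = -\frac{t^2/2}{\sigma^2 + Kt/3},
$$
which combined with the two-sided factor $2$ yields \eqref{eq:Lieb:3}.

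The main obstacle I expect is the rigorous verification of the two operator inequalities: the semidefinite upper bound $e^{\theta X_j}\leM Id+\theta X_j + g(\theta K)X_j^2$ requires a careful reduction to the scalar case via diagonalization of $X_j$, and the passage $\E e^{\theta X_j}\leM Id + g(\theta K)\E X_j^2 \Rightarrow \log\E e^{\theta X_j}\leM g(\theta K)\E X_j^2$ relies on operator monotonicity of the logarithm, which is a standard but non-elementary fact. The final scalar optimization is routine once the inequality $(e^s-1-s)/s^2\le 1/(2(1-s/3))$ is accepted; otherwise one has to verify it by expanding both sides.
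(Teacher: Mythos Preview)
Your argument is correct and is essentially the paper's proof: both use \eqref{eq:later1}, the Lieb iteration \eqref{eq:Lieb:4}, the mgf bound coming from the monotonicity of $s\mapsto(e^{s}-1-s)/s^{2}$, and operator monotonicity of $\log$; the only difference is that the paper first optimizes exactly with $\theta=\log(1+t/\sigma^{2})$ and then invokes $h(u)=(1+u)\log(1+u)-u\ge \dfrac{u^{2}/2}{1+u/3}$, whereas you bound $g(s)\le\dfrac{1}{2(1-s/3)}$ first and then pick $\theta$ --- an equivalent scalar manipulation. One typographical slip to fix: spectral calculus on $\theta X_j$ gives $e^{\theta X_j}\leM Id+\theta X_j+\theta^{2} g(\theta K)\,X_j^{2}$, so the factor $\theta^{2}g(\theta K)$ (not $g(\theta K)$) should be carried through the intermediate displays; it correctly reappears in your final line, so the conclusion is unaffected.
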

\begin{proof}
We concentrate (again) only on the case $K=1$. The general case follows by homogeneity.

\underline{\emph{Step 1.}}
We use the estimate
\begin{equation}
\P\Bigl(\lambda_{\max}\Bigl(\sum_{j=1}^N X_j\Bigr)> t\Bigr)=\P\Bigl(\sum_{j=1}^N X_j\not\leM t\cdot Id\Bigr)
\le e^{-\theta t}\E\,\tr \exp\Bigl(\sum_{j=1}^N\theta X_j\Bigr)
\end{equation}
derived earlier, cf. \eqref{eq:later1}. Then we apply \eqref{eq:Lieb:4} and obtain
\begin{align}
\notag\P\Bigl(\lambda_{\max}\Bigl(\sum_{j=1}^N X_j\Bigr)> t\Bigr)&\le e^{-\theta t}\tr\exp\Bigl(\sum_{j=1}^N \log(\E_j\, e^{\theta X_j})\Bigr)\\
\label{eq:Lieb:6}&\le e^{-\theta t}n\lambda_{\max} \biggl(\exp\Bigl(\sum_{j=1}^N \log(\E_j\, e^{\theta X_j})\Bigr)\biggr)\\
\notag&= e^{-\theta t}n \exp \biggl(\lambda_{\max} \Bigl(\sum_{j=1}^N \log(\E_j\, e^{\theta X_j})\Bigr)\biggr).
\end{align}
\underline{\emph{Step 2.}} Let us assume (w.l.o.g.) that $K=1$. Fix now $\theta>0$ and define a smooth function on the real line
$$
f(x)=\frac{e^{\theta x}-\theta x-1}{x^2}\ \text{for}\ x\not=0\ \text{and}\ f(0)=\frac{\theta^2}{2}.
$$
As $f$ is increasing, we get $f(x)\le f(1)$ for all $x\le 1$ and, therefore, also $f(X_j)\leM f(Id)= f(1)\cdot Id$. We get therefore for every $j=1,\dots,N$
\begin{align*}
e^{\theta X_j}=Id+\theta X_j+(e^{\theta X_j}-Id-\theta X_j)=Id + \theta X_j+X_j f(X_j) X_j\leM Id +\theta X_j + f(1)X_j^2
\end{align*}
and, in expectation,
\begin{align*}
\E\, e^{\theta X_j}\leM Id + f(1)\E\,X_j^2\leM \exp\Bigl(f(1)\cdot \E\,X_j^2\Bigr)=\exp((e^\theta-\theta-1)\cdot \E\,X_j^2).
\end{align*}
Plugging this into \eqref{eq:Lieb:6}, we further obtain
\begin{align*}
\P\Bigl(\lambda_{\max}\Bigl(\sum_{j=1}^N X_j\Bigr)> t\Bigr)&\le e^{-\theta t}n \exp \biggl(\lambda_{\max} \Bigl(\sum_{j=1}^N (e^\theta-\theta-1) \E\, X^2_j\Bigr)\biggr)\\
&=e^{-\theta t}n \exp \biggl((e^\theta-\theta-1)\lambda_{\max} \Bigl(\sum_{j=1}^N  \E\, X^2_j\Bigr)\biggr)\\
&\le e^{-\theta t}n \exp \bigl((e^\theta-\theta-1)\sigma^2\bigr).
\end{align*}
Finally, we plug in the minimizer over $\theta>0$, namely $\theta:=\log(1+t/\sigma^2)$ and obtain
\begin{align*}
&\le n\exp \Bigl(-\log(1+t/\sigma^2)t+\Bigl((1+t/\sigma^2)-\log(1+t/\sigma^2)-1\Bigr)\sigma^2\Bigr)\\
&=n\exp \Bigl(-\log(1+t/\sigma^2)(t+\sigma^2)+\frac{t}{\sigma^2}\cdot\sigma^2\Bigr)\\
&=n\exp \Bigl(-\Bigl[\log(1+t/\sigma^2)(1+t/\sigma^2)-t/\sigma^2\Bigr]\sigma^2\Bigr)\\
&=n\exp \Bigl(-\Bigl[h(t/\sigma^2)\Bigr]\sigma^2\Bigr)\le n\exp \Bigl(-\frac{(t/\sigma^2)^2/2}{1+t/(3\sigma^2)}\sigma^2\Bigr)\\
&=n\exp\Bigl(-\frac{t^2/2}{\sigma^2+t/3}\Bigr),
\end{align*}
where elementary calculus shows that 
$$
h(u)=(1+u)\log(1+u)-u\ge \frac{u^2/2}{1+u/3}
$$
for $u\ge 0.$
\end{proof}

\newpage
\section{Low-rank matrix recovery and matrix completion}
This section follows closely \cite{G}.

\subsection{Setting and main results}

We return to the question of recovering a low-rank matrix $A$ from a limited number
of linear measurements ${\mathcal X}(A)=\{\langle X_1,A\rangle_F,\dots,\langle X_m,A\rangle_F\}$.
In contrast to Section \ref{sec:2}, we will now restrict the possible choices of $X_j$'s.

As an important example we keep in mind is that $X_j$'s are chosen from the set $\{e_k e^T_l\}_{k,l}$.
Here, $e_ke_l^T$ is a matrix with the only non-zero entry on $k^{\rm th}$ row and $l^{\rm th}$ column.
Then $\langle e_ke_l^T,A\rangle_F=A_{k,l}$ is one entry of $A$ and we would like to recover a low-rank matrix
by observing only few of its entries.

We will (for simplicity) deal with squared $n\times n$ matrices $A$. We will assume, that $A$ is Hermitian
(or symmetric in the case of real matrices). Its rank will be denoted by $r\in\{1,\dots,n\}$ with $r\ll n$
being of the largest importance. Furthermore, we will assume that $\{X_a\}_{a=1}^{n^2}$ is an orthonormal
basis\footnote{The matrices $X_a$'s do not need to be necessarily self-adjoint.}
of the set of $n\times n$ matrices with respect to the Frobenius (=Hilbert-Schmidt) inner product.
Then
\begin{equation}\label{eq:G:1}
A=\sum_{a=1}^{n^2}\langle X_a,A\rangle_FX_a.
\end{equation}
The most natural setting is then as follows. We observe several randomly chosen scalar products
$$
\langle X_a,A\rangle_F,\quad a\in\Omega,
$$
where $\Omega\subset\{1,\dots,n^2\}$ is chosen at random among all subsets of $\{1,\dots,n^2\}$ with $m$ elements.
Finally, we would like to know when the minimizer of
\begin{equation}\label{eq:G:2}
\argmin_{Z\in\R^{n\times n}} \|Z\|_*,\quad\text{s.t.}\quad \langle X_a,Z\rangle_F=\langle X_a,A\rangle_F\quad\text{for all}\ a\in\Omega
\end{equation}
is unique and equal to $A$ itself.
The random choice of the set $\Omega$ is statistically rather difficult process. We will instead rather assume, that
we are given $m$ independent random variables $\omega_1,\dots,\omega_m$ taking uniformly distributed values
in $\{1,\dots,n^2\}$ and we consider instead of \eqref{eq:G:2} its analogue
\begin{equation}\label{eq:G:3}
\argmin_{Z\in\R^{n\times n}} \|Z\|_*,\quad\text{s.t.}\quad \langle X_{\omega_j},Z\rangle_F=\langle X_{\omega_j},A\rangle_F\quad\text{for all}\ j=1,\dots,m.
\end{equation}
The independence of $\omega$'s makes this approach much easier to analyze. There is nevertheless the danger of ``collisions'', i.e.
it might happen that $\omega_j=\omega_k$ for $j\not=k.$

We can see already now, how random matrices and operators come into play. The matrix $X_{\omega}$ is a random matrix
taking randomly and uniformly distributed the values in $\{X_1,\dots,X_{n^2}\}$. Moreover, we denote by
$$
{\mathcal P}_{\omega}:Z\to \langle X_\omega,Z\rangle_F X_\omega
$$
the projection onto $X_\omega.$ These are random matrix operators, which we combine together into the
sampling operator
\begin{equation}\label{eq:G:4}
{\mathcal R}:Z\to \frac{n^2}{m}\sum_{j=1}^{m}\langle X_{\omega_j},Z\rangle_F X_{\omega_j}.
\end{equation}
This allows us to reformulate \eqref{eq:G:3} once more. We analyze
\begin{equation}\label{eq:G:5}
\argmin_{Z\in\R^{n\times n}} \|Z\|_*\quad\text{s.t.}\quad {\mathcal R}(Z)={\mathcal R}(A).
\end{equation}

Before we come to the main result and its proof, let us make one simple observation.
If (say) $X_1$ is itself of low-rank and $A=X_1$, it will be surely difficult to find by \eqref{eq:G:5}.
Indeed, $\langle A,X_j\rangle_F=0$ for all $j>1$ due to the orthonormality of the basis $\{X_a\}_a$.
If we observe some of the coefficients $\{\langle A,X_a\rangle_F\}$, we might be lucky (if $\langle A,X_1\rangle_F$ is included in the selection)
or unlucky (if this coefficient is not included). The chance of this luck grows with the portion of coefficients observed and a large portion (nearly all)
of them has to observed if the chance of hiting it is supposed to be high. In general, in such a case we can not hope for recovery of $A$ from small number
of its coefficients with respect to this orthonormal basis.

We put $U=\range (A)=[\kernel(A)]^{\perp}$ be the column (and due to the symmetry also the row) space of $A$.
By $P_U$ we denote the orthogonal projection onto $U$. Hence
$$
A=P_UA=AP_U\quad\text{and}\quad P_{U^\perp}A=AP_{U^\perp}=0.
$$
We now express $A$ in an eigenvector basis of $A$. Let $\{u_1,\dots,u_r\}$ be an orthonormal basis of $U$
of eigenvectors of $A$ (i.e. $Au_j=\lambda_ju_j$) and let $u_{r+1},\dots,u_n$ be an orthonormal basis of $U^\perp$.
We can write $A$ with respect to this basis as
$$
A=\left(\begin{tabular}{llll|lll}$\lambda_1$&\dots&0&&&\vdots\\
0&$\lambda_2$&&$\ddots$&&0\\
&&$\ddots$&&&\vdots\\
0&$\ddots$&&$\lambda_r$\\
\hline
&&&&0\\
0&\dots&\dots&0&&$\ddots$\\
&&&&&&0
\end{tabular}\right).
$$
Furthermore, for each $n\times n$ matrix $Z$ we can use the decomposition
$$
Z=(P_U+P_{U^\perp})Z(P_U+P_{U^\perp})
$$
and write $Z$ with respect to the basis $\{u_j\}_{j=1}^n$ in the block form
$$
Z=\left(\begin{tabular}{l|l}$P_UZP_U$&$P_UZP_{U^\perp}$\\\hline$P_{U^\perp}ZP_U$&$P_{U^\perp}ZP_{U^\perp}$
\end{tabular}\right).
$$
Finally, we let $T$ be the matrices which vanish in this notation in the last block and ${\mathcal P}_T$
be the projection onto this subspace, i.e.
\begin{align*}
T&:=\{Z:P_{U^\perp}ZP_{U^\perp}=0\}\quad \text{and}\\
{\mathcal P}_T&:Z\to P_UZP_U+P_UZP_{U^\perp}+P_{U^\perp}ZP_U\\&\qquad\quad=P_UZ+ZP_U-P_UZP_U.
\end{align*}
By the observation above, some additional condition is necessary to guarantee the success of low-rank matrix recovery.
\begin{definition} The $n\times n$ rank-$r$ matrix $A$ has coherence $\nu > 0$ with respect to the operator basis $\{X_a\}_{a=1}^{n^2}$ if either
\begin{equation}\label{eq:G:20}
\max_{a}\|X_a\|^2\le \frac{\nu}{n},
\end{equation}
or
\begin{equation}\label{eq:G:21}
\max_{a}\|{\mathcal P}_TX_a\|_F^2\le \frac{2\nu r}{n}\quad \text{and}\quad\max_{a}\langle X_a,\sgn(A)\rangle_F^2\le \frac{\nu r}{n^2}.
\end{equation}
\end{definition}
The condition \eqref{eq:G:20} is more restrictive - it does not depend on the (unknown) matrix $A$ or its rank.
In other words, matrix bases with \eqref{eq:G:20} has small coherence with respect to all low-rank matrices.
Let us show that \eqref{eq:G:20} indeed implies the first half of \eqref{eq:G:21}. Observing that matrices from $T$ have the rank $2r$ at most we obtain
\begin{align*}
\|{\mathcal P}_TX_a\|_F^2&=\sup_{Z\in T,\|Z\|_F=1}\langle X_a,Z\rangle_F^2\le \sup_{Z\in T,\|Z\|_F=1}\|X_a\|^2\cdot\|Z\|_*^2\\
&\le \sup_{Z\in T,\|Z\|_F=1}2r\|X_a\|^2\cdot\|Z\|_F^2\le \frac{2\nu r}{n}.
\end{align*}
%The second half of \eqref{eq:G:20} follows similarly.
%\begin{align*}
%\langle X_a,\sgn(A)\rangle_F^2\le \|X_a\|^2\cdot\|\sgn(A)\|^2_*\le \frac{\nu}{n}\cdot
%\end{align*}
Furthermore, if $\|X_a\|_F=1$, then $\|X_a\|\ge \frac{1}{\sqrt{n}}$ and we see that $\nu$ is always greter or equal to 1 in \eqref{eq:G:20}.

The most important example is surely the operator basis given by $\{e_ie_j^T\}_{i,j=1}^{n^2}.$ Let $U=\range(A)$ and let $A$ satisfy
$$
\max_{i}\|P_Ue_j\|_2^2\le\mu_1\frac{r}{n}\quad\text{and}\quad\max_{i,j}|\langle e_i,\sgn(A)e_j\rangle|\le\mu_2\frac{\sqrt{r}}{n}.
$$
Then we obtain for each $(i,j)\in\{1,\dots,n\}^2$
\begin{align*}
\|{\mathcal P}_T (e_ie_j^T)\|_F^2&=\|P_U(e_ie_j^T)+P_{U^\perp}(e_ie_j^T)P_U\|_F^2\\
&=\|P_U(e_ie_j^T)\|_F^2+ \|P_{U^\perp}(e_ie_j^T)P_U\|_F^2\\
&=\|(P_Ue_i)e_j^T\|_F^2+\|(P_{U^\perp}e_i)(e_j^TP_U)\|_F^2\\
&=\|P_Ue_i\|_2^2\cdot \|e_j^T\|_2^2+\|P_{U^\perp}e_i\|_2^2\cdot \|e_j^TP_U\|_2^2\\
&\le \mu_1\frac{r}{n}\cdot 1+1\cdot \|P_{U}e_j\|_2^2\le 2\mu_1\frac{r}{n}\\
\langle e_ie_j^T,\sgn(A)\rangle_F^2&=\langle e_i,\sgn(A)e_j\rangle_F^2\le \mu_2^2\frac{r}{n^2}.
\end{align*}
Hence, we obtain \eqref{eq:G:20} with $\nu=\max\{\mu_1,\mu_2^2\}$.

The following theorem is the main result of this section. It shows, that with random choice of coefficients
of $A$ w.r.t. the operator basis we indeed recover $A$ with high probability.

\begin{theo} Let $A$ be a $n\times n$ rank-$r$ matrix with coherence $\nu$ with respect to
an operator basis $\{X_a\}_{a=1}^{n^2}$. Let $\Omega\subset\{1,\dots,n^2\}$ be a random set of size
$|\Omega|\ge O(nr\nu(1+\beta)\ln^2n)$. Then the solution of \eqref{eq:G:2} is unique and is equal to $Z$ with probability at least
$1-n^{-\beta}.$
\end{theo}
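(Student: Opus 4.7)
The plan is to follow the by-now standard \emph{dual certificate / golfing scheme} of Gross. The starting point is the deterministic optimality condition: $A$ is the unique minimizer of \eqref{eq:G:5} provided that (i) the restricted sampling operator is well conditioned on $T$, i.e.\ $\|\mathcal{P}_T\mathcal{R}\mathcal{P}_T-\mathcal{P}_T\|\le 1/2$, and (ii) there exists a ``dual certificate'' $Y$ in the range of $\mathcal{R}$ (a linear combination of the sampled $X_{\omega_j}$) with $\|\mathcal{P}_TY-\sgn(A)\|_F$ extremely small and $\|\mathcal{P}_{T^\perp}Y\|<1/2$. The proof of sufficiency is standard: for any feasible perturbation $A+H$ one decomposes $H=\mathcal{P}_TH+\mathcal{P}_{T^\perp}H$, uses the subgradient inequality $\|A+H\|_*\ge\|A\|_*+\langle\sgn(A),\mathcal{P}_TH\rangle_F+\|\mathcal{P}_{T^\perp}H\|_*$, replaces $\sgn(A)$ by $\mathcal{P}_TY$ (up to a negligible error), uses $\mathcal{R}(H)=0$ to move $Y$ from $T$ to $T^\perp$, and finally invokes (i) to conclude $H=0$.

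Step (i) is a direct application of the non-commutative Bernstein inequality of Section 3. Write
\[
\mathcal{P}_T\mathcal{R}\mathcal{P}_T-\mathcal{P}_T=\sum_{j=1}^m Z_j,\qquad Z_j=\frac{n^2}{m}\mathcal{P}_T\mathcal{P}_{\omega_j}\mathcal{P}_T-\frac{1}{m}\mathcal{P}_T,
\]
viewed as self-adjoint operators on the Hilbert--Schmidt space of $n\times n$ matrices. The coherence bound $\|\mathcal{P}_TX_a\|_F^2\le 2\nu r/n$ gives $\|Z_j\|\lesssim \nu r n/m$ and $\|\sum_j\E Z_j^2\|\lesssim \nu r n/m$, so Bernstein yields $\|\mathcal{P}_T\mathcal{R}\mathcal{P}_T-\mathcal{P}_T\|\le 1/2$ with probability $\ge 1-2n^2\exp(-cm/(\nu r n))$; this already consumes one factor of $\log n$ in the sample complexity.

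The hard part is step (ii), and this is where the golfing scheme enters. Split the $m$ samples into $\ell\asymp\log n$ disjoint batches of size $m_0=m/\ell$, with sampling operators $\mathcal{R}_1,\ldots,\mathcal{R}_\ell$. Set $R_0=\sgn(A)\in T$, $Y_0=0$, and iterate
\[
Y_k=Y_{k-1}+\mathcal{R}_k R_{k-1},\qquad R_k=\sgn(A)-\mathcal{P}_TY_k=(\mathcal{P}_T-\mathcal{P}_T\mathcal{R}_k\mathcal{P}_T)R_{k-1}.
\]
Applying the Bernstein bound of the previous paragraph \emph{independently} to each batch (using independence of the batches and conditioning on the history) yields $\|R_k\|_F\le\tfrac12\|R_{k-1}\|_F$, hence $\|R_\ell\|_F\le 2^{-\ell}\sqrt{r}=n^{-\Theta(1)}$, which handles the ``$\mathcal{P}_TY=\sgn(A)$'' requirement.

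The genuinely delicate task is bounding $\|\mathcal{P}_{T^\perp}Y_\ell\|=\|\sum_{k=1}^\ell\mathcal{P}_{T^\perp}\mathcal{R}_kR_{k-1}\|$. Conditionally on $R_{k-1}$, each $\mathcal{P}_{T^\perp}\mathcal{R}_kR_{k-1}=\frac{n^2}{m_0}\sum_{j\in\mathrm{batch}_k}\langle X_{\omega_j},R_{k-1}\rangle_F\,\mathcal{P}_{T^\perp}X_{\omega_j}$ is a sum of independent centered random matrices. A matrix Bernstein bound gives a spectral-norm estimate of order $\sqrt{\nu r\log n/m_0}\cdot\|R_{k-1}\|_\infty+(\nu/n)\log n\cdot\|R_{k-1}\|_\infty$, where $\|R\|_\infty:=\max_a|\langle X_a,R\rangle_F|$. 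To keep this effective one must \emph{also} control how this ``incoherence norm'' of $R_{k-1}$ contracts; this is done by a parallel Bernstein argument showing $\|R_k\|_\infty\le\tfrac12\|R_{k-1}\|_\infty$, using the second half of \eqref{eq:G:21} to initialize with $\|\sgn(A)\|_\infty\le\sqrt{\nu r/n^2}$. Summing the resulting geometric series across the $\ell$ batches gives $\|\mathcal{P}_{T^\perp}Y_\ell\|\lesssim\sqrt{\nu r n\log n/m}<1/2$ as long as $m\gtrsim\nu r n\log n$ per batch. Finally, a union bound over the $\ell\asymp\log n$ batches and over the events in steps (i)--(ii) absorbs the extra $\log n$, yielding the stated sample complexity $m\ge C\nu r n(1+\beta)\log^2 n$ and failure probability $\le n^{-\beta}$; the second logarithm in $\log^2n$ is precisely the price paid for the golfing iterations.
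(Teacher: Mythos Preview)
Your plan is correct and follows the same golfing-scheme strategy as the paper, with two differences worth flagging.

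First, a small omission: the theorem is stated for a uniformly random \emph{subset} $\Omega$ (sampling without replacement), whereas the analysis you sketch---and the paper's---is for i.i.d.\ sampling with replacement via ${\mathcal R}$. The paper spends a short Step~1 reducing the former to the latter (the kernel of ${\mathcal R}$ only depends on the set of distinct samples, and sampling with replacement can only hurt). You should include this reduction.

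Second, your control of $\|\mathcal P_{T^\perp}Y_\ell\|$ is organized differently. You propose to track the ``incoherence norm'' $\|R_k\|_\infty=\max_a|\langle X_a,R_k\rangle_F|$ and show it contracts alongside $\|R_k\|_F$, initializing via the second half of \eqref{eq:G:21}. That is exactly what Gross does under the weaker hypothesis \eqref{eq:G:21}, and it is correct. The paper, however, presents the proof only under the stronger basis condition \eqref{eq:G:20} ($\max_a\|X_a\|^2\le\nu/n$); there a single matrix-Bernstein lemma bounds $\|\mathcal P_{T^\perp}\mathcal R_iZ_{i-1}\|$ directly in terms of $\|Z_{i-1}\|_F$ (because $|\langle X_a,Z\rangle_F|\le\|X_a\|\,\|Z\|_*\le\sqrt{2r\nu/n}\,\|Z\|_F$ for $Z\in T$), so no separate $\ell_\infty$ iteration is needed. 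Your route buys the full theorem under both coherence assumptions at the price of one extra Bernstein estimate per batch; the paper's route is shorter but covers only \eqref{eq:G:20}. The deterministic optimality argument you give (subgradient inequality, then injectivity of $\mathcal R$ on $T$) is the standard variant and is equivalent to the paper's ``$\Delta_T$ large / $\Delta_T$ small'' split.
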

\begin{proof}\ \\
Let $Z\in\R^{n\times n}$. We put $\Delta=Z-A$. We have to show that $\|Z\|_*=\|\Delta + A\|_*>\|A\|_*$ if ${\mathcal R}(\Delta)=0$
and $\Delta\not=0.$ If ${\mathcal R}(\Delta)\not=0$, then $Z$ is not one of the matrices considered in \eqref{eq:G:5} and we call it infeasible.
Furthermore, we decompose $\Delta=\Delta_T+\Delta_{T^\perp}$, where $\Delta_T={\mathcal P}_T \Delta.$\vskip.3cm
\noindent\underline{Step 1.: Reduction to sampling with collisions} \vskip.3cm

Sampling of a random subset $\Omega\subset\{1,\dots,n^2\}$ with $m$ elements corresponds to sampling of $\omega_1,\dots,\omega_m$ without collisions.
By that we mean, that $\omega_1$ is chosen randomly and uniformly from $\{1,\dots,n^2\}$. Then $\omega_2$ is chosen from $\{1,\dots,n^2\}\setminus\{\omega_1\}$, etc.
We denote the probability of \eqref{eq:G:5} recovering $A$ by $p_{\rm wo}(m)$ when sampling without collisions and by $p_{\rm wi}(m)$ when collisions are allowed.

We define ${\mathcal R}'$ as in \eqref{eq:G:4} but with the sum going only over distinct $\omega_i\not=\omega_j$. The number of distinct
samples will be denoted by $m'\le m$. Then $\kernel\,{\mathcal R}=\kernel\,{\mathcal R}'$
and ${\mathcal R}(Z-A)=0$ if and only if ${\mathcal R}'(Z-A)=0$. Conditioned on $m'$, the distribution of ${\mathcal R}'$ is the same as sampling
$m'$ times without replacement. Hence
$$
p_{\rm wi}(m)=\E_{m'}[p_{\rm wo}(m')]\le p_{\rm wo}(m).
$$
Hence, sampling with replacement is more likely to fail than sampling without and it is enough to show that the probability
of failure when sampling with replacement is tiny.

\noindent\underline{Step 2.: $\Delta_T$ large} \vskip.3cm

Let ${\mathcal R}$ be defined by \eqref{eq:G:4}, i.e.
$$
{\mathcal R}=\frac{n^2}{m}\sum_{j=1}^m {\mathcal P}_{\omega_j}.
$$
The operator norm of ${\mathcal R}:(\R^{n\times n},\|\cdot\|_F)\to(\R^{n\times n},\|\cdot\|_F)$ is equal to $\frac{n^2}{m}$ times the highest number of collisions in one direction. A very rough estimate
is therefore $\|{\mathcal R}\|\le n^2.$ Furthermore,
\begin{equation}\label{eq:G:6}
\E {\mathcal R}=\frac{n^2}{m}\sum_{j=1}^m \E{\mathcal P}_{\omega_j}=n^2\E{\mathcal P}_{\omega}=n^2\cdot \frac{1}{n^2}\sum_{a=1}^{n^2}{\mathcal P}_{a}=Id
\end{equation}
and
\begin{equation*}
\E [{\mathcal P}_T{\mathcal R}{\mathcal P}_T]={\mathcal P}_T[\E {\mathcal R}]{\mathcal P}_T={\mathcal P}_T.
\end{equation*}
We will prove later (using the concentration bounds on matrices) that even more is true, namely that
\begin{equation}\label{eq:G:7}
\|{\mathcal P}_T-{\mathcal P}_T{\mathcal R}{\mathcal P}_T\|<1/2
\end{equation}
with high probability (and let us denote the failure of this event by $p_1$).

Let
\begin{equation*}
\|\Delta_T\|_F^2> 2mn^2\|\Delta_{T^\perp}\|_F^2.
\end{equation*}
Then we obtain
\begin{align*}
\|{\mathcal R}\Delta_{T^\perp}\|^2_F&\le \|{\mathcal R}\|^2\cdot \|\Delta_{T^\perp}\|^2_F\le n^4 \|\Delta_{T^\perp}\|^2_F\\
&< \frac{n^2}{2m}\|\Delta_T\|_F^2\le \frac{n^2}{m}(1-\|{\mathcal P}_T-{\mathcal P}_T{\mathcal R}{\mathcal P}_T\|)\|\Delta_T\|_F^2\\
&\le\frac{n^2}{m}\Bigl\{\langle \Delta_T ,\Delta_T\rangle_F-\langle[{\mathcal P}_T-{\mathcal P}_T{\mathcal R}{\mathcal P}_T]\Delta_T,\Delta_T \rangle_F\Bigr\}\\
&=\frac{n^2}{m}\Bigl\{\langle \Delta_T ,\Delta_T\rangle_F-\langle{\mathcal P}_T\Delta_T,\Delta_T \rangle_F+\langle{\mathcal P}_T{\mathcal R}{\mathcal P}_T\Delta_T,\Delta_T \rangle_F\Bigr\}\\
&= \frac{n^2}{m}\langle \Delta_T, {\mathcal P}_T{\mathcal R}{\mathcal P}_T\Delta_T\rangle_F=\frac{n^2}{m}\langle \Delta_T,{\mathcal R}\Delta_T\rangle_F\\
&=\frac{n^4}{m^2}\sum_{j=1}^{m}\langle \Delta_T,{\mathcal P}_{\omega_j}(\Delta_T)\rangle_F
=\frac{n^4}{m^2}\sum_{j=1}^{m}\langle {\mathcal P}_{\omega_j}(\Delta_T),{\mathcal P}_{\omega_j}(\Delta_T)\rangle_F\\
&\le \frac{n^4}{m^2}\sum_{j,k=1}^{m}\langle {\mathcal P}_{\omega_j}(\Delta_T),{\mathcal P}_{\omega_k}(\Delta_T)\rangle_F
=\langle {\mathcal R}\Delta_T, {\mathcal R}\Delta_T\rangle_F=\|{\mathcal R}\Delta_T\|_F^2
\end{align*}
and we conclude that ${\mathcal R}(\Delta)\not =0$ and $\Delta$ is infeasible.

It remains to prove \eqref{eq:G:7}. We will actually prove that
\begin{equation}\label{eq:G'}
\P(\|{\mathcal P}_T-{\mathcal P}_T{\mathcal R}{\mathcal P}_T\|\ge t)\le 4nr\exp\Bigl(-\frac{t^2 m}{4(2\nu rn+1)}\Bigr),\quad 0<t<2.
\end{equation}
We apply the operator bound Theorem \ref{theo:Bern1} with operators
$$
S_{\omega_j}=\frac{n^2}{m}{\mathcal P}_T{\mathcal P}_{\omega_j}{\mathcal P}_T-\frac{1}{m}{\mathcal P}_T.
$$
We have to verify the setting of this theorem. Therefore we observe couple of facts.
\begin{itemize}
\item $S_{\omega_j}$ are centered:
$$
\E\,S_{\omega_j}=\frac{n^2}{m}{\mathcal P}_T[\E\,{\mathcal P}_{\omega_j}]{\mathcal P}_T-\frac{1}{m}{\mathcal P}_T
=\frac{n^2}{m}{\mathcal P}_T\Bigl[\frac{1}{n^2}\sum_{a=1}^{n^2}{\mathcal P}_a\Bigr]{\mathcal P}_T-\frac{1}{m}{\mathcal P}_T=0.
$$
\item Their sum is the operator to bound:
$$
\sum_{j=1}^m S_{\omega_j}=\frac{n^2}{m}{\mathcal P}_T\Bigl[\sum_{j=1}^m{\mathcal P}_{\omega_j}\Bigr]{\mathcal P}_T-{\mathcal P}_T
={\mathcal P}_T{\mathcal R}{\mathcal P}_T-{\mathcal P}_T.
$$
\item We estimate the value $c$ by \eqref{eq:G:21}
\begin{align*}
\|S_{\omega_j}\|&=\frac{1}{m}\Bigl\|n^2{\mathcal P}_T{\mathcal P}_{\omega_j}{\mathcal P}_T-{\mathcal P}_T \Bigr\|
\le \frac{n^2}{m}\|{\mathcal P}_T{\mathcal P}_{\omega_j}{\mathcal P}_T\| + \frac{1}{m}\|{\mathcal P}_T\|\\
&\le \frac{n^2}{m}\max_a\|{\mathcal P}_T X_a\|_F^2+\frac{1}{m}\le \frac{2\nu r n+1}{m}=:c,
\end{align*}
where we used that
\begin{align*}
\|{\mathcal P}_T{\mathcal P}_{\omega_j}{\mathcal P}_T(Z)\|_F&=\Bigl\|{\mathcal P}_T\Bigl(\langle {\mathcal P}_T(Z),X_{\omega_j} \rangle_FX_{\omega_j}\Bigr)\Bigr\|_F
=\Bigl\|\langle {\mathcal P}_T(Z),X_{\omega_j} \rangle_F {\mathcal P}_T(X_{\omega_j})\Bigr\|_F\\
&=|\langle {\mathcal P}_T(Z),X_{\omega_j} \rangle_F|\cdot \|{\mathcal P}_T(X_{\omega_j})\|_F\\
&=|\langle Z,{\mathcal P}_T(X_{\omega_j}) \rangle_F|\cdot \|{\mathcal P}_T(X_{\omega_j})\|_F\le \|Z\|_F\cdot \|{\mathcal P}_T(X_{\omega_j})\|^2_F.
\end{align*}
\item \dots and $V_0^2$:
\begin{align*}
\|\E[S_{\omega_j}^2]\|&=\Bigr\|\E\Bigl[\Bigr(\frac{n^2}{m}{\mathcal P}_T{\mathcal P}_{\omega_j}{\mathcal P}_T-\frac{1}{m}{\mathcal P}_T\Bigr)^2\Bigr]\Bigl\|\\
&=\Bigl\|\E\Bigl[\Bigl(\frac{n^2}{m}{\mathcal P}_T{\mathcal P}_{\omega_j}{\mathcal P}_T\Bigr)^2-2\frac{n^2}{m^2}{\mathcal P}_T{\mathcal P}_{\omega_j}{\mathcal P}_T+\frac{1}{m^2}{\mathcal P}_T\Bigr]\Bigr\|\\
&=\Bigl\|\E\Bigl(\frac{n^2}{m}{\mathcal P}_T{\mathcal P}_{\omega_j}{\mathcal P}_T\Bigr)^2-\frac{1}{m^2}{\mathcal P}_T\Bigr\|\\
&\le\frac{n^4}{m^2}\|\E[{\mathcal P}_T{\mathcal P}_{\omega_j}{\mathcal P}_T{\mathcal P}_{\omega_j}{\mathcal P}_T]\| +\frac{1}{m^2}.
%&\le V_0^2.
\end{align*}
As ${\mathcal P}_{\omega_j}{\mathcal P}_T(Z)\in\linspan\{X_{\omega_j}\}$ and
$$
{\mathcal P}_{\omega_j}{\mathcal P}_T(X_{\omega_j})=\langle{\mathcal P}_TX_{\omega_j},X_{\omega_j} \rangle_F\cdot X_{\omega_j},
$$
i.e. on $\linspan(X_{\omega_j})$ the operator ${\mathcal P}_{\omega_j}{\mathcal P}_T$ acts as $\langle{\mathcal P}_TX_{\omega_j},X_{\omega_j} \rangle_F$
times the identity. We use \eqref{eq:G:21} and the fact that  ${\mathcal P}_T{\mathcal P}_{\omega_j}{\mathcal P}_T$ are positive semi-definite, we get
\begin{align*}
\|\E[S_{\omega_j}^2]\|&\le\frac{n^4}{m^2}\|\E[\langle{\mathcal P}_TX_{\omega_j},X_{\omega_j} \rangle_F{\mathcal P}_T{\mathcal P}_{\omega_j}{\mathcal P}_T]\| +\frac{1}{m^2}\\
&\le \frac{n^4}{m^2}\cdot \frac{2\nu r}{n}\|\E[{\mathcal P}_T{\mathcal P}_{\omega_j}{\mathcal P}_T]\| +\frac{1}{m^2}\\
&= \frac{n^4}{m^2}\cdot \frac{2\nu r}{n}\cdot\frac{1}{n^2}\|{\mathcal P}_T\| +\frac{1}{m^2}=\frac{2\nu r n+1}{m^2}=:V_0^2.
\end{align*}
\end{itemize}
By Theorem \ref{theo:Bern1}, we get for $0<t<2mV_0^2/c=2m \cdot\frac{2\nu r n+1}{m^2}\cdot\frac{m}{2\nu r n+1}=2$ the desired inequality with
$$
p_1=4nr\exp\Bigl(-\frac{m}{16(2\nu rn+1)}\Bigr).
$$

Finally, we note that the operators involved can be understood as defined on $T$ only, which has dimension $2rn-r^2\le 2rn.$

%{\bf todo: Dimension $=2nr$.}

\vskip.3cm

\noindent\underline{Step 3.: $\Delta_T$ small}\vskip.3cm

We assume that
\begin{equation}\label{eq:G:10}
\|\Delta_T\|_F< n^2\|\Delta_{T^\perp}\|_F
\end{equation}
and 
\begin{equation}\label{eq:G:11}
{\mathcal R}(\Delta)=0\quad\text{i.e.}\quad \Delta\in (\range {\mathcal R})^\perp.
\end{equation}
We will show that (under additional conditions) this implies that
$$
\|Z\|_*=\|A+\Delta\|_*>\|A\|_*.
$$
Let us recall that $U=\range(A).$

We calculate\footnote{The first inequality $\|Z\|_*\ge \|P_UZP_U\|_*+\|P_{U^\perp}ZP_{U^\perp}\|_*$ is sometimes called pinching inequality.
It can be proved by duality:
\begin{align*}
\|P_UZP_U\|_*+\|P_{U^\perp}ZP_{U^\perp}\|_*&=\sup_{\|A\|\le 1}\langle P_UZP_U,A\rangle_F +\sup_{\|B\|\le 1}\langle P_{U^\perp}ZP_{U^\perp},B\rangle_F\\
&=\sup_{\|A\|\le 1}\langle Z,P_UAP_U\rangle_F +\sup_{\|B\|\le 1}\langle Z,P_{U^\perp}BP_{U^\perp}\rangle_F\\
&=\sup_{\|A\|\le 1,\|B\|\le 1}\langle Z,P_UAP_U+P_{U^\perp}BP_{U^\perp}\rangle_F\\
&\le\sup_{\|C\|\le 1} \langle Z,C\rangle_F=\|Z\|_*.
\end{align*}
}
\begin{align*}
\|A+\Delta\|_*&\ge \|P_U(A+\Delta)P_U\|_*+\|P_{U^\perp}(A+\Delta)P_{U^\perp}\|_*\\
&=\|A+P_U\Delta P_U\|_*+\|\Delta_{T^\perp}\|_*\\
&\ge \langle \sgn(A),A+P_U\Delta P_U\rangle_F+\langle\sgn(\Delta_{T^\perp}),\Delta_{T^\perp} \rangle_F\\
&=\|A\|_*+\langle \sgn(A),P_U\Delta P_U\rangle_F + \langle\sgn(\Delta_{T^\perp}),\Delta_{T^\perp} \rangle_F\\
&=\|A\|_*+\langle \sgn(A)+\sgn(\Delta_{T^\perp}),\Delta \rangle_F.
\end{align*}
If we show, that $\langle \sgn(A)+\sgn(\Delta_{T^\perp}),\Delta \rangle_F>0$, it follows that $\|A+\Delta\|_*>\|A\|_*.$

We will show later that there is $Y\in\range ({\mathcal R})$ with
\begin{equation}\label{eq:G:12}
\|{\mathcal P}_TY-\sgn(A)\|_F\le \frac{1}{2n^2}\quad \text{and}\quad \|{\mathcal P}_{T^\perp}Y\|\le \frac{1}{2}.
\end{equation}
As $Y\in \range ({\mathcal R})$, we get $\langle Y,\Delta\rangle_F=\langle {\mathcal R}(\cdot),\Delta\rangle_F=\langle \cdot,{\mathcal R}(\Delta)\rangle_F=0$.
Then we finish this step by
\begin{align*}
\langle \sgn(A)+\sgn(\Delta_{T^\perp}),\Delta \rangle_F&=\langle \sgn(A)+\sgn(\Delta_{T^\perp})-Y,\Delta \rangle_F\\
&=\langle \sgn(A)-Y,\Delta_T \rangle_F+\langle \sgn(\Delta_{T^\perp})-Y,\Delta_{T^\perp} \rangle_F\\
&=\langle \sgn(\Delta_{T^\perp}),\Delta_{T^\perp} \rangle_F-\langle {\mathcal P}_{T^\perp}Y,\Delta_{T^\perp} \rangle_F-\langle {\mathcal P}_TY-\sgn(A),\Delta_T \rangle_F\\
&\ge \frac{1}{2}\|\Delta_{T^\perp}\|_*-\frac{1}{2n^2}\|\Delta_T\|_F \ge \frac{1}{2}\|\Delta_{T^\perp}\|F-\frac{1}{2n^2}\|\Delta_T\|_F>0.
\end{align*}

\noindent\underline{Step 4.: Existence of $Y\in \range\, {\mathcal R}$ with \eqref{eq:G:12}}\\

We present the proof only if \eqref{eq:G:20} holds and refer to \cite{G} for a proof under the condition \eqref{eq:G:21}.

We need to construct the \emph{dual certificate} $Y$ with the following properties
\begin{enumerate}
\item[(i)] $Y\in \range\, {\mathcal R}$,
\item[(ii)] $\displaystyle \|{\mathcal P}_TY-\sgn(A)\|_F\le \frac{1}{2n^2}$,
\item[(iii)] $\displaystyle \|{\mathcal P}_{T^\perp}Y\|\le \frac{1}{2}$.
\end{enumerate}
The most intuitive construction of $Y$ would be to take
$$
Y=\frac{n^2}{m}\sum_{i=1}^m\langle X_{\omega_i},\sgn(A) \rangle_F \cdot X_{\omega_i}
={\mathcal R}(\sgn(A)).
$$
Then (i) is clearly satisfied, and (ii) and (iii) hold for $\E Y=\sgn(A)$.
The hope is that application of concentration bounds on random matrices could give
the inequalities in (ii) and (iii).

Unfortunately, this construction of $Y$ does not converge quickly enough.
The \emph{golfing scheme} of \cite{G} constructs $Y$ in an iterative way.
Namely we put
$$
Y_1=\frac{n^2}{k}\sum_{i=1}^k\langle X_{\omega_i},\sgn(A) \rangle_F \cdot X_{\omega_i}.
$$
For good choice of $k$, $Y_1$ is already a reasonable approximation of $\sgn(A)$.
We then apply the same procedure to $\sgn(A)-{\mathcal P}_T Y_1$ and update the information
in this way, i.e. we put
$$
Y_2=Y_1+\frac{n^2}{k}\sum_{i=k+1}^{2k}\langle X_{\omega_i},\sgn(A)-{\mathcal P}_TY_1 \rangle_F \cdot X_{\omega_i}.
$$
The sequence ${\mathcal P}_T Y_i$ converges exponentially fast to $\sgn(A)$ in $l=m/k.$
On the other hand, we need to choose the $k$ large enough to allow for the application
of the concentration bounds.

To analyze the iterative scheme, we first need the following lemma.
\begin{lemma}\label{lem:G} Let $Z\in T$. Then
\begin{equation}
\P\Bigl(\|{\mathcal P}_{T^\perp}{\mathcal R} Z\|>t\Bigr)
\le \begin{cases}\displaystyle 2n\exp\Bigl(-\frac{t^2 m}{4\nu n\|Z\|_F^2}\Bigr)\quad \text{for}\quad
t\le \sqrt{2/r}\|Z\|_F,\\
\displaystyle 2n\exp\Bigl(-\frac{t m}{2\nu \sqrt{2r}n\|Z\|_F}\Bigr)\quad \text{for}\quad
t> \sqrt{2/r}\|Z\|_F.
\end{cases}
\end{equation}
\end{lemma}
\begin{proof}
It is enough to consider $\|Z\|_F=1.$ We put
$$
 S_{j}=\frac{n^2}{m}\langle X_j,Z\rangle_F{\mathcal P}_{T^\perp} X_j.
$$
Then
\begin{itemize}
\item $$\sum_{j=1}^m S_{\omega_j}={\mathcal P}_{T^\perp}{\mathcal R}Z;$$
\item $\E[S_{\omega_j}]=0$ due to (remember that $Z\in T$)
\begin{align*}
\E[S_{\omega_j}]&=\frac{n^2}{m}\cdot\frac{1}{n^2}\sum_{j=1}^{n^2} \langle X_j,Z\rangle_F{\mathcal P}_{T^\perp} X_j
={\mathcal P}_{T^\perp}\Bigl(\frac{1}{m}\sum_{j=1}^{n^2}\langle X_j,Z\rangle_F X_j\Bigr)
=\frac{1}{m}{\mathcal P}_{T^\perp}Z=0;
\end{align*}
\item the parameter $V_0^2$ is estimated by
\begin{align*}
\|\E[S_{\omega_j}^2]\| &= \Bigl\|\E\Bigl[S_{\omega_j}\circ S_{\omega_j}\Bigr]\Bigr\|\\
&= \Bigl\|\E\Bigl[\frac{n^4}{m^2}\langle X_{\omega_j},Z\rangle_F^2({\mathcal P}_{T^\perp} X_{\omega_j})^2\Bigr]\Bigr\|\\
&= \Bigl\|\frac{n^4}{m^2}\cdot \frac{1}{n^2}\sum_{j=1}^{n^2}\langle X_{j},Z\rangle_F^2({\mathcal P}_{T^\perp} X_{j})^2\Bigr\|\\
&\le \frac{n^2}{m^2}\sum_{j=1}^{n^2}\langle X_{j},Z\rangle_F^2\|({\mathcal P}_{T^\perp} X_{j})^2\|\\
&\le \frac{n^2}{m^2}\max_j \|({\mathcal P}_{T^\perp} X_{j})^2\|\cdot\sum_{j=1}^{n^2}\langle X_{j},Z\rangle_F^2\\
&\le \frac{n^2}{m^2}\frac{\nu}{n}\|Z\|_F^2=\frac{n\nu}{m^2}=:V_0^2;
\end{align*}
\item and finally
\begin{align*}
\|S_{\omega_j}\|&=\Bigl\|\frac{n^2}{m}\langle X_{\omega_j},Z\rangle_F{\mathcal P}_{T^\perp} X_{\omega_j}\Bigr\|\\
&\le \frac{n^2}{m} |\langle X_{\omega_j},Z\rangle_F|\cdot\|{\mathcal P}_{T^\perp} X_{\omega_j}\|\le \frac{n^2}{m}\sqrt{\frac{\nu}{n}} |\langle X_{\omega_j},Z\rangle_F|\\
&\le \frac{n^2}{m}\sqrt{\frac{\nu}{n}} \|X_{\omega_j}\|\cdot\|Z\|_*\le \frac{n^2}{m}\sqrt{\frac{\nu}{n}}\cdot \sqrt{\frac{\nu}{n}}\sqrt{2r}=
\frac{\nu n\sqrt{2r}}{m}=:c,
\end{align*}
as every $Z\in T$ has rank at most $2r$.
\item Observing that $\displaystyle 2mV_0^2/c=2m\cdot\frac{n\nu}{m^2}\cdot \frac{m}{\nu n\sqrt{2r}}=\sqrt{\frac{2}{r}}$,
the rest follows by Theorem \ref{theo:Bern1}.
\end{itemize}
\end{proof}
Let us now finish the proof of the existence of the dual certificate.
We split $m=m_1+\dots+m_l$ and define
\begin{equation}
{\mathcal R}_i:Z\to\frac{n^2}{m_i}\sum_{j=m_1+\dots+m_{i-1}+1}^{m_1+\dots+m_i} \langle X_{\omega_j},Z\rangle_F X_{\omega_j}.
\end{equation}
We set
\begin{align}
Y_0&=0,\quad Z_0=\sgn(A),\\
Y_i&=Y_{i-1}+{\mathcal R}_i X_{i-1}=\sum_{j=1}^i {\mathcal R}_jZ_{j-1},\\
Z_i&=\sgn(A)-{\mathcal P}_TY_i.
\end{align}
We get
\begin{align*}
Z_0&=\sgn(A),\\
Z_1&=\sgn(A)-{\mathcal P}_TY_1=\sgn(A)-{\mathcal P}_T{\mathcal R}_1\sgn(A)=(Id-{\mathcal P}_T{\mathcal R}_1{\mathcal P}_T)\sgn(A),\\
Z_2&=\sgn(A)-{\mathcal P}_TY_2=\sgn(A)-{\mathcal P}_T\Bigl(Y_1+{\mathcal R}_2Z_1\Bigr)\\
&=\sgn(A)-{\mathcal P}_T\Bigl({\mathcal R}_1{\mathcal P}_T\sgn(A)+{\mathcal R}_2(Id-{\mathcal P}_T{\mathcal R}_1{\mathcal P}_T)\sgn(A)\Bigr)\\
&=(Id-{\mathcal P}_T{\mathcal R}_1{\mathcal P}_T)\sgn(A)-{\mathcal P}_T{\mathcal R}_2(Id-{\mathcal P}_T{\mathcal R}_1{\mathcal P}_T)\sgn(A)\\
&=(Id-{\mathcal P}_T{\mathcal R}_2{\mathcal P}_T)(Id-{\mathcal P}_T{\mathcal R}_1{\mathcal P}_T)\sgn(A)
=(Id-{\mathcal P}_T{\mathcal R}_2{\mathcal P}_T)Z_1,\\
\vdots&\\
Z_i&=(Id-{\mathcal P}_T{\mathcal R}_i{\mathcal P}_T)(Id-{\mathcal P}_T{\mathcal R}_{i-1}{\mathcal P}_T)\dots(Id-{\mathcal P}_T{\mathcal R}_1{\mathcal P}_T)\sgn(A).
\end{align*}
Assume that (with probability of failure at most $p_2(i)$)
$$
\|Z_i\|_F=\|(Id-{\mathcal P}_T{\mathcal R}_i{\mathcal P}_T)Z_{i-1}\|_F=\|({\mathcal P}_T-{\mathcal P}_T{\mathcal R}_i{\mathcal P}_T)Z_{i-1}\|_F\le \frac{1}{2}\|Z_{i-1}\|_F.
$$
Then
$$
\|Z_i\|_2\le \frac{\sqrt{r}}{2^i}.
$$
Furthermore, we assume that (with the probability of failure at most $p_3(i)$)
$$
\|{\mathcal P}_{T^\perp}{\mathcal R}_iZ_{i-1}\|\le \frac{1}{4\sqrt{r}}\|Z_{i-1}\|_F,
$$
which gives
\begin{align*}
\|{\mathcal P}_{T^\perp}Y_l\|&\le \sum_{i=1}^l\|{\mathcal P}_{T^\perp}{\mathcal R}_iZ_{i-1}\|\le \frac{1}{4\sqrt{r}}\sum_{i=1}^l\|Z_{i-1}\|_F\\
&\le \frac{1}{4\sqrt{r}}\sum_{i=1}^l\frac{\sqrt{r}}{2^{i-1}}<\frac{1}{4}\sum_{i=0}^\infty \frac{1}{2^i}=\frac{1}{2}.
\end{align*}
and
\begin{equation}
\|Z_l\|=\|{\mathcal P}_TY_l-\sgn(A)\|\le\frac{\sqrt{r}}{2^l}\le\frac{1}{2n^2}
\end{equation}
for $l=\lceil \log_2(2n^2\sqrt{r})\rceil$.

Finally, we have to estimate the probabilities $p_1, p_2(i)$ and $p_3(i)$ and ensure that
$$
p_1+\sum_{i=1}^lp_2(i)+\sum_{i=1}^lp_3(i)\le n^{-\beta}.
$$
Recall that
$$
p_1=4nr\exp\Bigl(-\frac{m}{16(2\nu rn+1)}\Bigr).
$$

By \eqref{eq:G'} and using that $Z_i\in T$ we get
$$
\P(\|{\mathcal P}_T-{\mathcal P}_T{\mathcal R}_i{\mathcal P}_T\|\ge 1/2)\le 4nr\exp\Bigl(-\frac{m_i}{16(2\nu rn+1)}\Bigr)=:p_2(i).
$$
Furthermore, Lemma \ref{lem:G} gives
\begin{align*}
\P\Bigl(\|{\mathcal P}_{T^\perp}{\mathcal R} Z_{i-1}\|>\frac{\|Z_{i-1}\|_F}{4\sqrt{r}}\Bigr)
&\le 2n\exp\Bigl(-\frac{\|Z_{i-1}\|^2_F m_i}{16r\cdot 4\nu n\|Z_{i-1}\|_F^2}\Bigr)\\
&=2n\exp\Bigl(-\frac{m_i}{64\nu rn}\Bigr)=:p_3(i).
\end{align*}
Here, we have used that $\displaystyle t=\frac{\|Z_{i-1}\|_F}{4\sqrt{r}}< \frac{\sqrt{2}\|Z_{i-1}\|_F}{\sqrt{r}}.$
Finally, to ensure that $p_2(i)$ and $p_3(i)$ are both bounded by $n^{-\beta}/3$, it is enough to chose
$$
m_i\ge 64\nu rn[\ln(6nr)+\ln(2l)+\beta\ln(n)]
$$
leading to
$$
m=\sum_{i=1}^l m_i\ge 64l\nu rn[\ln(6nr)+\ln(2l)+\beta\ln(n)]=O(\nu rn(1+\beta)\ln^2 n).
$$
\end{proof}

\newpage

\thispagestyle{scrplain}

\end{document}